\numberwithin{equation}{section}
\theoremstyle{plain}
\newtheorem{theorem}[equation]{Theorem}
\newtheorem{thm}[equation]{Theorem}
\newtheorem{lem}[equation]{Lemma}
\newtheorem{cor}[equation]{Corollary}
\theoremstyle{definition}
\newtheorem{ex}{Examples}
\newtheorem{exam}[equation]{Example}
\theoremstyle{remark}
\newtheorem{remark}[equation]{Remark}
\newtheorem{rem}[equation]{Remark}
\def\N{\mathbb {N}}
\def\R{\mathbb{R}}
\DeclareMathOperator{\supp}{supp}
\newcommand{\normmm}[1]{{\left\vert\kern-0.25ex\left\vert\kern-0.25ex\left\vert #1
		\right\vert\kern-0.25ex\right\vert\kern-0.25ex\right\vert}}
\def\Z{\mathbb{Z}}
\begin{document}

\title{A unified approach to self-improving property via $K$-functionals}

\author[Oscar Dom\'{i}nguez et al.]{Oscar Dom\'{i}nguez, Yinqin Li, Sergey Tikhonov,
Dachun Yang and Wen Yuan}

\address{(O. Dom\'{i}nguez) Departamento de An{\'a}lisis Matem{\'a}tico y Matem\'atica Aplicada\\ Facultad de
Matem{\'a}ticas\\ Universidad Complutense de Madrid\\ Plaza de
Ciencias 3, 28040 Madrid\\ Spain}
\email{oscar.dominguez@ucm.es}

\address{(Y. Li, D. Yang, W. Yuan) Laboratory of Mathematics and Complex
Systems (Ministry of Education of China),
School of Mathematical Sciences, Beijing Normal
University, Beijing 100875, The People's Republic of China}
\email{yinqli@mail.bnu.edu.cn, dcyang@bnu.edu.cn, wenyuan@bnu.edu.cn}

\address{(S. Tikhonov) Centre de Recerca Matem\`{a}tica, Campus de Bellaterra, Edifici C 08193 Bellaterra (Barcelona),
Spain; ICREA, Pg. Llu\'{i}s Companys 23, 08010 Barcelona, Spain, and Universitat Aut\`{o}noma de Barcelona,
Facultat de Ci\`{e}ncies, 08193 Bellaterra, Barcelona, Spain}
\email{stikhonov@crm.cat}
%
%

\thanks{\emph{Acknowledgements.} The authors thank Mario Milman and Petru Mironescu for a number of helpful suggestions. Oscar Dom\'{i}nguez is supported by Grupo UCM-970966, Yinqin Li, Dachun Yang, and
Wen Yuan are supported by the National Key Research and Development Program of China
(Grant No.\ 2020YFA0712900) and the National
Natural Science Foundation of China
(Grant Nos.\ 12371093, 12071197 and 12122102), and Sergey Tikhonov is supported
by PID2020-114948GB-I00, 2021 SGR 00087, AP 14870758,
the CERCA Programme of the Generalitat de Catalunya, and Severo Ochoa and Mar\'{i}a de Maeztu
Program for Centers and Units of Excellence in R\&D (CEX2020-001084-M)}

\keywords{Poincar\'e--Ponce inequalities; Gaussian Sobolev inequalities; John--Nirenberg inequality;  $K$-functionals; Bourgain--Brezis--Mironescu--Maz'ya--Shaposhnikova formulas}
\subjclass[2020]{Primary: 46E35, 26D15; Secondary: 46B70, 46E30.}


\begin{abstract}
	
	In this paper we obtain new quantitative estimates that improve the classical inequalities:  Poincar\'e--Ponce, Gaussian Sobolev,  and John--Nirenberg. Our method is based on the $K$-functionals and allows one to derive self-improving type inequalities.
We show the optimality of the method by obtaining 
  new Bourgain--Brezis--Mironescu and Maz'ya--Shaposhnikova limiting formulas. In particular, we derive these formulas for fractional powers 
  of infinitesimal generators of operator semigroups on 
    Banach spaces.

\end{abstract}

\maketitle

\setcounter{tocdepth}{1}


\section{Introduction}
The main goal of this paper is 
 to study self-improving properties of several celebrated inequalities in analysis.
We present a  general method to attack this problem  based on the \emph{$K$-functional} \cite{BennettSharpley, BerghLofstrom, DeVoreLorentz}. Recall that  the  $K$-functional relative to  a compatible pair\footnote{Loosely speaking, this means that $A_0 + A_1$ makes sense.} of normed spaces   $(A_0, A_1)$  is given by
$$
	K(t,f; A_0, A_1) := \inf_{f = f_0 + f_1} (\|f_0\|_{A_0} + t \, \|f_1\|_{A_1})$$
	for $t > 0$ and $f \in A_0 + A_1$.
		We start with  the following   result.

\begin{thm}\label{ThmIntPoincare}
	Assume that there exists a linear operator $T$ such that
	 \begin{equation}\label{Linearity}
	 T : A_0 \to  B \qquad \text{and} \qquad T: A_1 \to B,
	 \end{equation}
	 with corresponding norms $\|T\|_0 := \|T\|_{A_0 \to B}$ and $\|T\|_{1} := \|T\|_{A_1 \to B}$. Let $\{\rho_\varepsilon\}_{\varepsilon > 0}$ be a family of non-negative functions satisfying
	 	\begin{equation}\label{AssRho1}
		\int_0^\infty \rho_\varepsilon(t) \, dt = 1
	\end{equation}
	  with $\emph{supp } \rho_\varepsilon \subset \big(0, \frac{\|T\|_1}{\|T\|_0} \big)$ and let $p > 0$. Then
	\begin{equation}\label{PonceIntIneq}
		\|T f\|_{B} \leq  \|T\|_1\,  \left(\int_0^{\frac{\|T\|_1}{\|T\|_0}} \bigg(\frac{K(t, f; A_0, A_1)}{t}\bigg)^p \rho_\varepsilon (t ) \, dt \right)^{\frac{1}{p}}
	\end{equation}
	for every $f \in A_0 + A_1$ and $\varepsilon > 0$.
\end{thm}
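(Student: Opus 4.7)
The plan is to first establish a pointwise-in-$t$ bound
$$\|T f\|_B \leq \|T\|_1 \cdot \frac{K(t, f; A_0, A_1)}{t}$$
valid for every $t \in (0, \|T\|_1/\|T\|_0)$, and then average this bound against $\rho_\varepsilon$ to obtain \eqref{PonceIntIneq}.

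For the pointwise bound, I would fix an arbitrary decomposition $f = f_0 + f_1$ with $f_0 \in A_0$ and $f_1 \in A_1$. Linearity of $T$, the triangle inequality in $B$, and the operator norm bounds in \eqref{Linearity} give
$$\|T f\|_B \leq \|T f_0\|_B + \|T f_1\|_B \leq \|T\|_0 \|f_0\|_{A_0} + \|T\|_1 \|f_1\|_{A_1}.$$
For any $t$ with $0 < t < \|T\|_1/\|T\|_0$ one has $\|T\|_0 \leq \|T\|_1/t$, so factoring out $\|T\|_1/t$ yields
$$\|T f\|_B \leq \frac{\|T\|_1}{t}\bigl(\|f_0\|_{A_0} + t \, \|f_1\|_{A_1}\bigr).$$
Taking the infimum over all admissible decompositions $f = f_0 + f_1$ on the right-hand side produces the claimed pointwise estimate.

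To conclude, I would raise both sides to the power $p$, multiply by $\rho_\varepsilon(t)$, and integrate in $t$ over $(0, \|T\|_1/\|T\|_0)$. Since $\rho_\varepsilon$ is supported in that interval and $\int_0^\infty \rho_\varepsilon(t)\, dt = 1$ by \eqref{AssRho1}, the left-hand side equals $\|T f\|_B^p$, so taking the $p$-th root gives \eqref{PonceIntIneq}.

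There is no real obstacle in this argument; the proof is essentially a packaging of the definition of the $K$-functional together with the normalization of $\rho_\varepsilon$. The one point that requires some care is the factoring step, which depends crucially on the support condition $\supp \rho_\varepsilon \subset (0, \|T\|_1/\|T\|_0)$: this is precisely what forces $t \|T\|_0 \leq \|T\|_1$ throughout the range of integration and allows the two norms $\|T\|_0$ and $\|T\|_1$ to be absorbed into the single prefactor $\|T\|_1/t$. This observation also explains why the upper endpoint $\|T\|_1/\|T\|_0$ is the natural truncation threshold for the $K$-functional in this context.
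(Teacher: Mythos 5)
Your proof is correct and is essentially the same argument as the paper's: you unwind the paper's use of the identity $K(t, Tf; B, B) = \min\{1,t\}\,\|Tf\|_B$ and its $K$-functional comparison into an explicit pointwise bound via a decomposition $f = f_0 + f_1$, but the underlying steps (absorb $\|T\|_0$ into $\|T\|_1/t$ thanks to the support restriction, take the infimum, then integrate against $\rho_\varepsilon$ and use normalization) are identical. Nothing to add.
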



We stress that Theorem \ref{ThmIntPoincare} is an interpolation-free statement in the sense that the right-hand side of \eqref{PonceIntIneq} does not involve standard interpolation norms. In fact, this result can be considered  an extension of  interpolation inequalities  to a non-interpolation setting (cf. Section \ref{Section:SelfImp} for a detailed discussion).  We provide a simple  proof of Theorem \ref{ThmIntPoincare} that relies exclusively on basic properties of $K$-functionals.  Note that \eqref{PonceIntIneq} provides a stronger assertion than the starting inequality:
\begin{equation}\label{AsymF}
\|T f\|_B \leq \|T\|_1 \|f\|_{A_1},
\end{equation}
cf. \eqref{Linearity}.
Indeed, it follows from $K(t, f; A_0, A_1) \leq t \|f\|_{A_1}$ and \eqref{AssRho1} that
$$
 \left(\int_0^{\frac{\|T\|_1}{\|T\|_0}} \bigg(\frac{K(t, f; A_0, A_1)}{t}\bigg)^p \rho_\varepsilon (t ) \, dt \right)^{\frac{1}{p}} \leq \|f\|_{A_1}, \qquad \text{for all} \quad \varepsilon > 0.
$$
Furthermore, under the additional assumption that, for every $a \in (0, 1)$,
	\begin{equation}\label{AssRho}
	\lim_{\varepsilon \to 0^+} \, \int_0^{a} \rho_\varepsilon (t) \, dt =1,
	\end{equation}
\eqref{AsymF} can be recovered as the limit of  \eqref{PonceIntIneq} as $\varepsilon \to 0^+$. Indeed, 
 we have\footnote{In \eqref{IntronewV3} we also assume a rather mild and natural condition on $(A_0, A_1)$, namely $(A_0, A_1)$ is Gagliardo closed; cf. Remark \ref{RemarkInt}.} (see Lemma \ref{LemmaReal} and Remark \ref{RemarkInt}) 
\begin{equation}\label{IntronewV3}
	\lim_{\varepsilon \to 0^+} \left(\int_0^{\frac{\|T\|_1}{\|T\|_0}} \bigg(\frac{K(t, f; A_0, A_1)}{t}\bigg)^p \rho_\varepsilon (t ) \, dt \right)^{\frac{1}{p}} = \|f\|_{A_1}.
\end{equation}
 Note that for the family $\displaystyle\rho_\varepsilon (t) = p(1-\varepsilon) \bigg(\frac{\|T\|_0}{\|T\|_1 } \, t \bigg)^{p(1-\varepsilon)} t^{-1} \mathbf{1}_{\big(0, \frac{\|T\|_1}{\|T\|_0} \big)},$ $\varepsilon \in (0, 1),$  \eqref{IntronewV3} gives Milman's extrapolation formula \cite[Theorem 1 and Lemma 3]{Milman}:
\begin{equation}\label{MEx}
	\lim_{\varepsilon \to 1^-}    \bigg((1-\varepsilon)  \int_0^{ \frac{\|T\|_1}{\|T\|_0}} [t^{-\varepsilon} K(t, f; A_0, A_1)]^p \frac{dt}{t} \bigg)^{\frac{1}{p}}= \frac{\|T\|_0}{p^{\frac{1}{p}} \|T\|_1 }  \, \|f\|_{A_1}.
\end{equation}

Although looking elementary, inequality \eqref{PonceIntIneq} provides a unifying approach to improve several important classical inequalities.
 In particular, we show how specific  choices of the spaces $A_0, A_1, B$ and the operator $T$ in Theorem \ref{ThmIntPoincare} lead to improvements of the following results:

\begin{itemize}
	\item[$\cdot$] Poincar\'e--Ponce inequality;
	\item[$\cdot$] Gaussian Sobolev inequality;
	\item[$\cdot$] John--Nirenberg inequality.
\end{itemize}
 Before going into more detail,
 we   recall the celebrated ``BBM-formula", by Bourgain, Brezis and Mironescu  \cite{BourgainBrezisMironescu} (see also \cite{Brezis} and  \cite[Chapter 6]{BrezisMironescu}), for the Sobolev norm and  the sharp version of Poincar\'e inequality due to Ponce \cite{Ponce}.

\subsection{Bourgain--Brezis--Mironescu formulas and Poincar\'e--Ponce inequalities}
  Assume that $\{\rho_\varepsilon\}_{\varepsilon > 0}$ is a family of non-negative functions on $(0, \infty)$ satisfying \eqref{AssRho1} and \eqref{AssRho}.
	Let $\Omega$ be a bounded smooth domain\footnote{The case $\Omega = \R^N$ is also admissible in \eqref{BBM}.} in $\R^N, \, N \geq 1,$ and $p \in [1, \infty)$. Then, for any $f \in W^{1, p}(\Omega)$,
	\begin{equation}\label{BBM}
		\lim_{\varepsilon \to 0^+} \, \int_\Omega \int_\Omega \frac{|f(x) -f(y)|^p}{|x-y|^{p+N-1}} \, \rho_\varepsilon(|x-y|) \, dx \, dy = C_{N, p} \, \int_\Omega |\nabla f(x)|^p \, dx,
	\end{equation}
	where
\begin{equation}\label{CBBM}
C_{N, p} :=   \int_{\mathbb{S}^{N-1}}
\left|\omega\cdot \mathbf{e}\right|^p
\,d\sigma^{N-1}(\omega).
\end{equation}
Here $\sigma^{N-1}$ is the surface Lebesgue measure on the
unit sphere $\mathbb{S}^{N-1}$ and $\mathbf{e} \in \mathbb{S}^{N-1}$ is any fixed vector.
 Furthermore\footnote{Given two non-negative quantities $A$ and $B$, the notation $A \lesssim B$ means that there exists a constant $C$, independent of all essential parameters, such that $A \leq C B$. We write $A \approx B$ if $A \lesssim B \lesssim A$.}
\begin{equation}\label{BBM2}
	\int_\Omega \int_\Omega \frac{|f(x) -f(y)|^p}{|x-y|^{p+N-1}} \, \rho_\varepsilon(|x-y|) \, dx \, dy \lesssim \int_\Omega |\nabla f(x)|^p \, dx
\end{equation}
for all $\varepsilon > 0$. For the special choice of $\rho_\varepsilon(t) = \frac{\varepsilon}{D^\varepsilon t^{1-\varepsilon}} \, \mathbf{1}_{(0, D)},$ where $D$ is the diameter of $\Omega$, \eqref{BBM} shows the continuity of the scale of fractional Sobolev spaces $\dot{W}^{s, p}(\Omega)$ (properly  renormalized) as $s \to 1^{-}$. More precisely,
 \begin{equation}\label{BBMClassic}
	\lim_{s \to 1^-} (1-s) \, \|f\|^p_{\dot{W}^{s, p}(\Omega)} = \frac{C_{N, p}}{p} \, \int_{\Omega} |\nabla f(x)|^p \, dx,
\end{equation}
where
$$
	  \|f\|_{\dot{W}^{s, p}(\Omega)} := \bigg(\int_\Omega \int_{\Omega} \frac{|f(x)-f(y)|^p}{|x-y|^{s p + N}}  \, dx \, dy \bigg)^{\frac{1}{p}}
$$
is the classical \emph{Gagliardo seminorm} of $f$. In particular, $\|f\|_{\dot{W}^{s, p}(\Omega)}$ blows up like $(1-s)^{-1/p}$ as $s \to 1^-$ unless $u$ is constant.

Formula \eqref{BBM} has  a crucial  impact on the field with far-reaching applications to PDE's and differential geometry and  has been extended to many different settings. For a recent account, 
 see the monograph by Brezis and Mironescu \cite{BrezisMironescu}. In particular, the counterpart of \eqref{BBMClassic}
 as $s \to 0^+$
  was obtained  by Maz'ya and Shaposhnikova \cite{Mazya}: if $f \in C^\infty_0(\R^N)$ then
\begin{equation}\label{MSClassic}
	\lim_{s \to 0^+} s \, \|f\|_{\dot{W}^{s, p}(\R^N)}^p = \frac{2 |\mathbb{S}^{N-1}|}{p} \, \int_{\R^N} |f(x)|^p \, dx.
\end{equation}
Analogs of \eqref{BBMClassic} and \eqref{MSClassic} for arbitrary integer smoothness were obtained in \cite{Milman} and \cite{KaradzhovMilmanXiao} as applications of interpolation techniques (cf. \eqref{MEx}). See also  \cite{Borghol, BIK, Ferreira}.

Motivated by the BBM-formula, Ponce \cite{Ponce} established a remarkable family of inequalities that improves the classical Poincar\'e inequality:
\begin{equation}\label{Poincare}
	\int_\Omega |f(x)-f_\Omega|^p \, dx \leq A_0 \,  \int_\Omega |\nabla f(x)|^p \, dx, \qquad p \in [1, \infty),
\end{equation}
where $f_\Omega := \frac{1}{|\Omega|} \int_\Omega f$. According to \cite[Theorem 1.1]{Ponce},  given any $N \geq 2$ and $\delta > 0$ there exists $\varepsilon_0 \in (0, 1)$ sufficiently small such that\footnote{In   \cite{Ponce} the discrete version of \eqref{PonceIneq} was considered,   under  the  transformation $\varepsilon \leftrightarrow \frac{1}{n}$, $n \in \N$.}
\begin{equation}\label{PonceIneq}
	\int_\Omega |f(x)-f_\Omega|^p \, dx \leq \bigg(\frac{A_0}{C_{N, p}} + \delta \bigg)  \, \int_\Omega \int_\Omega \frac{|f(x)-f(y)|^p}{|x-y|^{p+N-1}} \, \rho_\varepsilon(|x-y|) \, dx \,dy
\end{equation}
for every $f \in L^p(\Omega)$ and $\varepsilon \in (0, \varepsilon_0]$. In light of  \eqref{BBM2},  estimate  \eqref{PonceIneq} provides a stronger assertion than  \eqref{Poincare}. In fact, as a consequence of \eqref{BBM}, the exact constant in \eqref{Poincare} can  be recovered from  \eqref{PonceIneq} by taking limits as $\varepsilon \to 0^+$ and $\delta \to 0^+$.   The case $N=1$ is also covered by \cite[Theorem 1.3]{Ponce} but  additional assumptions on $\{\rho_\varepsilon\}$ are required.  {The proof of \eqref{PonceIneq} is by contradiction\footnote{This explains why the constant in \eqref{PonceIneq} is not explicit.} and relies on \eqref{BBM} together with a  BBM-compactness criterion in $L^p(\Omega)$ (see \cite[Theorem 1.2]{Ponce})}. A distinguished example of \eqref{PonceIneq} is  the classical BBM  inequality
\begin{equation}\label{Intro110}
	\int_\Omega |f(x)-f_\Omega|^p \, dx \lesssim (1-s)  \, \int_\Omega \int_\Omega \frac{|f(x)-f(y)|^p}{|x-y|^{s p + N}}  \, dx \,dy,
\end{equation}
for $0 < s_0 < s < 1$.

%
%

\subsection{Sharpened Poincar\'e--Ponce inequalities}\label{Section1.2}

As an  application of Theorem \ref{ThmIntPoincare},  we are able to sharpen and extend the Poincar\'e--Ponce inequality \eqref{PonceIneq} in several directions; see Theorem \ref{ThmSharpPPPhik} below.  To avoid unnecessary technicalities, 
   we focus on the case $\Omega = Q$, a cube in $\R^N$ with edges parallel to the axes of coordinates.
   Compared to the classical results, our accomplishments are as follows: 
\begin{enumerate}[{\rm (i)}]
\item We establish a strengthened version of \eqref{PonceIneq} with a remainder term. To be more precise, the family of inequalities  \eqref{PonceIneq} can be understood in terms of families of functionals $\{I_\varepsilon\}_{\varepsilon \in (0, 1)}$ such that
\begin{equation}\label{46new}
	\int_Q |f-f_Q|^p \leq C  I_\varepsilon(f)
\end{equation}
for some constant $C$ that does not depend on $\varepsilon$ and
$$
	\lim_{\varepsilon \to 0^{+}} I_\varepsilon(f) = C' \, \|\nabla f\|^p_{L^p(Q)}.
$$
Our results show that the right-hand side of \eqref{46new} can be improved: 
\begin{equation}\label{ImprIntro1}
	\int_Q |f-f_Q|^p \leq C  (I_\varepsilon(f) - J_\varepsilon(f))
\end{equation}
with  $J_\varepsilon(f) > 0$,
and
$$
	\lim_{\varepsilon \to 0^{+}} ( I_\varepsilon(f) - J_\varepsilon(f)) = C' \, \|\nabla f\|^p_{L^p(Q)}.
$$
The new family of inequalities \eqref{ImprIntro1} is a reminiscent of improved versions of classical Hardy--Sobolev inequalities involving remainder terms \cite{BrezisLieb, BrezisNirenberg}.
\item The case of higher-order derivatives is covered.
\item We can deal with functions in arbitrary r.i. spaces (rearrangement invariant Banach function spaces) on $\R^N, \, N \geq 1$. In particular, our formulation covers the delicate case $N=1$ without any further assumptions on $\{\rho_\varepsilon\}_{\varepsilon > 0}$. This is in sharp contrast with  \eqref{PonceIneq}.

\item Our estimates hold for all $\varepsilon > 0$ with underlying equivalence constants independent of any auxiliary parameter $\delta > 0$, which is not the case in \eqref{PonceIneq}. In this respect, we assert that \eqref{SharpPPk} below is a more explicit estimate than \eqref{PonceIneq}.

\item  The family $\{\rho_\varepsilon\}_{\varepsilon > 0}$ does not have to  satisfy  condition \eqref{AssRho}, but only the very mild assumption \eqref{AssRho1}.
\end{enumerate}

Before giving the precise statement of sharpened Poincar\'e--Ponce inequalities, we briefly discuss how these inequalities can be connected with $K$-functionals in such a way that Theorem \ref{ThmIntPoincare} can be applied. The connection is made through approximation techniques, namely,  Whitney inequalities for the best approximation error. Recall that, for $k \in \N$ and $p \in [1, \infty]$, the \emph{best approximation} of any $f \in L^p(Q)$ with respect to $\mathcal{P}_{k-1}$, the space of polynomials of degree at most $k-1$, is measured by
\begin{equation}\label{localerror}
	E_{k}(f, Q)_p := \inf_{P\in\mathcal{P}_{k-1}}\left\|f-P\right\|_{L^p(Q)}.
\end{equation}
Observe that
\begin{equation}\label{Means}
E_1(f, Q)_p \approx \bigg(\int_Q |f-f_Q|^p \bigg)^{1/p}.
\end{equation}
This concept plays a prominent role in approximation theory, cf. the survey  by Brudnyi \cite{Brudnyi} and references therein. In particular, a central issue in approximation theory is the  \emph{Whitney-type inequalities}, which establish bounds for $E_{k}(f, Q)_p$ in terms of smoothness characteristics of $f$ usually given by the moduli of smoothness $\omega_k(f, t)_{L^p(Q)}$ (cf. \eqref{DefMod}). The classical Whitney inequality states that
\begin{equation}\label{WhitneyEstim}
	E_{k}(f, Q)_p \lesssim  \omega_k(f, \ell(Q))_{L^p(Q)},
\end{equation}
where $\ell(Q)$ is the edge length of $Q$. This result has a long and rich history, which goes back to Whitney \cite{Whitney} if $N=1$ and $p=\infty$. There are further extensions to  $p \in (0, \infty]$, more general domains (convex and Lipschitz domains), anisotropic versions in terms of parallelepipeds, etc.
In particular, we are interested in the extension of \eqref{WhitneyEstim} to  general function spaces (say, r.i. spaces) $X$ on $\R^N$  
  (cf. \cite[Theorem 1]{Brudnyi70})
\begin{equation}\label{WhitneyEstimB}
	E_{k}(f, Q)_X \leq C  \omega_k(f, \ell(Q))_{X(Q)},
\end{equation}
where\footnote{The definition of $E_{k}(f, Q)_X$ is the usual modification of \eqref{localerror}, where $L^p(Q)$ is replaced by $X(Q)$ (cf. \eqref{LocalX}).} $C$ depends only on $k$ and $N$.  Using the known estimates of moduli smoothness, we obtain
\begin{equation}\label{4.4}
	\omega_k(f, t)_{X(Q)} \lesssim t^k \, \|\nabla^k f\|_{X(Q)}.
\end{equation}
It is clear that \eqref{WhitneyEstimB} implies the following (higher order) Poincar\'e inequality:
\begin{equation}\label{ThmSharpPPPhike}
	E_{k}(f, Q)_X \lesssim \ell(Q)^k \, \|\nabla^k f\|_{X(Q)}.
\end{equation}	
In particular, the classical inequality \eqref{Poincare} corresponds to $k=1$ and $X = L^p(\R^N)$ (cf. \eqref{Means}).

Further, we establish the desired link between the Poincar\'e inequality and  $K$-functionals.
 Taking into account the well-known equivalence (with equivalence constants independent of $t, f$ and $Q$)
\begin{equation}\label{EstimKFunctMod}
	K(t^k, f; X(Q), \dot{W}^{k} X (Q)) \approx \omega_k (f, t)_{X(Q)}
\end{equation}
(cf. \cite[pp. 339--341]{BennettSharpley} and \cite[Chapter 6, Theorem 2.4]{DeVoreLorentz} for $X = L^p$, but similar arguments work with any r.i. space $X$),
enables us to
improve the  Poincar\'e inequality  \eqref{ThmSharpPPPhike} as follows:
\begin{equation}\label{Bridge}
	E_{k}(f, Q)_X \lesssim K(\ell(Q)^k, f; X(Q), \dot{W}^{k} X(Q)).
\end{equation}
  Applying our general
   self-improving technique
  to \eqref{Bridge}, we establish the following Poincar\'e--Ponce-type inequalities. The result has a simple and elegant formulation even in the  general setting of r.i. spaces and higher order derivatives, and it is new even in the prototypical case $X=L^p(\R^N)$ and $k=1$ (cf. Remark \ref{Remark1.28}).

\begin{thm}\label{ThmSharpPPPhik}
Let $X$ be a r.i. space on $\R^N$ and let  $p \in (0, \infty), \, k\in\mathbb{N}$.
Given any cube $Q$, consider $\{\rho_\varepsilon\}_{\varepsilon > 0}$ satisfying \eqref{AssRho1}
with $\emph{supp } \rho_\varepsilon \subset (0, \ell(Q)^k)$ and
define
\begin{equation}\label{gsfg}
	\phi_{\varepsilon} (t)
:= \int_{t}^{\ell(Q)} u^{k(1-p)-N}
\rho_\varepsilon(u^k) \, \frac{du}{u}, \qquad t \in (0, \ell(Q)).
\end{equation}
Then, for any $f\in X(Q)$
and $\varepsilon\in(0,\infty)$,
	\begin{equation}\label{SharpPPk}
		E_{k}(f, Q)_X^p	\lesssim \ell(Q)^{kp}
\int_{|h| \leq \ell(Q)}  \left\|\Delta^k_h f \right\|_{X(Q(k, h))}^p
\phi_{\varepsilon} (|h|) \,dh,
	\end{equation}
	where $Q(k, h) := \{x \in Q: x + k h \in Q\}.$
\end{thm}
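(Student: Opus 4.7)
The plan is to invoke Theorem \ref{ThmIntPoincare} with an operator $T$ chosen so that $\|Tf\|_B = E_k(f,Q)_X$, and then convert the resulting $K$-functional integral into the stated integral over differences $\Delta_h^k f$ by a change of variables and Fubini.

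First I would set $A_0 := X(Q)$, $A_1 := \dot{W}^k X(Q)$, $B := X(Q)/\mathcal{P}_{k-1}$, and let $T$ be the canonical quotient map, so that $\|Tf\|_B = E_k(f,Q)_X$. The trivial bound gives $\|T\|_0 \leq 1$, while the higher-order Poincar\'e inequality \eqref{ThmSharpPPPhike} gives $\|T\|_1 \lesssim \ell(Q)^k$, so $\|T\|_1/\|T\|_0 \gtrsim \ell(Q)^k$, which accommodates the assumed support condition on $\rho_\varepsilon$. Applying Theorem \ref{ThmIntPoincare} then yields
\begin{equation*}
E_k(f,Q)_X^p \lesssim \ell(Q)^{kp}\int_0^{\ell(Q)^k}\!\bigg(\frac{K(t,f;X(Q),\dot W^k X(Q))}{t}\bigg)^{\!p}\rho_\varepsilon(t)\,dt.
\end{equation*}

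Next I would change variables $t=u^k$, so $dt=ku^{k-1}\,du$ and the integration range becomes $(0,\ell(Q))$. At this point I would invoke the integral analogue of the equivalence \eqref{EstimKFunctMod}, namely
\begin{equation*}
K(u^k, f; X(Q), \dot{W}^k X(Q))^p \lesssim \frac{1}{u^N} \int_{|h|\leq u} \|\Delta_h^k f\|_{X(Q(k,h))}^p \, dh, \qquad u \in (0,\ell(Q)),
\end{equation*}
which is an averaged-modulus variant of the sup-modulus equivalence used in the derivation of \eqref{Bridge}. Substituting this estimate produces the integrand $u^{k(1-p)-N-1}\rho_\varepsilon(u^k)\int_{|h|\leq u}\|\Delta_h^k f\|^p_{X(Q(k,h))}\,dh$ in the $u$-variable.

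Finally, Fubini's theorem interchanges the $u$ and $h$ integrals: writing the region $|h|<u<\ell(Q)$ as $|h|<\ell(Q)$ and $u\in(|h|,\ell(Q))$, the inner $u$-integral is exactly $\phi_\varepsilon(|h|)$ by the definition \eqref{gsfg}, so we arrive at
\begin{equation*}
E_k(f,Q)_X^p \lesssim \ell(Q)^{kp}\int_{|h|\leq \ell(Q)} \|\Delta_h^k f\|_{X(Q(k,h))}^p \,\phi_\varepsilon(|h|)\, dh,
\end{equation*}
which is precisely \eqref{SharpPPk}. The main obstacle is the integral form of \eqref{EstimKFunctMod} used in the middle step: for $X=L^p$ with $p\in[1,\infty)$ it is classical (e.g.\ by the Johnen--Scherer mollification argument), but extending it to arbitrary r.i.\ spaces and to the quasi-normed range $p\in(0,1)$, with the correct localization on the truncated set $Q(k,h)$, requires choosing a suitable near-optimum $g$ in the $K$-functional infimum (a scale-$u$ mollification of $f$) and carefully controlling the boundary contributions on $Q$.
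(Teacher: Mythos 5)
Your proposal matches the paper's proof essentially step by step: the same choice of $T$ as the quotient map $X(Q)\to X(Q)/\mathcal{P}_{k-1}$ with $\|T\|_0=1$ and $\|T\|_1\lesssim\ell(Q)^k$, the same application of Theorem~\ref{ThmIntPoincare}, and the same conversion via the averaged-difference realization of $K(t^k,f;X(Q),\dot W^kX(Q))$ followed by Fubini. The ``integral analogue of \eqref{EstimKFunctMod}'' you flag as the main obstacle is precisely what the paper supplies as Lemma~\ref{lemmaAk} in Appendix~\ref{AppendixB} (invoked in the proof as ``adequately adapted to cubes''), so your outline is both correct and aligned with the authors' route.
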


\begin{remark}[The case $X = L^p$]\label{Remark1.28}
 One can apply Fubini's theorem in \eqref{SharpPPk}  to establish
	\begin{equation}\label{SharpPPkLp}
		E_{k}(f, Q)_p^p	\lesssim \ell(Q)^{kp}
\int_{Q}  \int_{\widetilde{Q}(k,x)}
\left|\Delta^k_hf(x)\right|^p
\phi_{\varepsilon} (|h|) \,dh\,dx,
	\end{equation}
	where $\widetilde{Q}(k, x) := \{h \in \R^N : x + k h \in Q\}$ for $x \in Q.$ Letting $k=1$ in the previous estimate, we have
	\begin{equation}\label{SharpPPkLp2}
		\int_Q |f-f_Q|^p \lesssim \ell(Q)^p \int_Q \int_Q |f(x)-f(y)|^p \phi_\varepsilon(|x-y|) \, dx \, dy
	\end{equation}
	where (cf. \eqref{gsfg})
	\begin{equation}\label{SharpPPkLp3}
	\phi_{\varepsilon} (t)
= \int_{t}^{\ell(Q)}
\frac{\rho_\varepsilon(u)}{u^{p+N}} \, du.
\end{equation}
A quick comparison between  \eqref{SharpPPkLp2} and \eqref{PonceIneq} shows that the weights $\frac{\rho_\varepsilon(|x-y|)}{|x-y|^{p+N-1}}$ in the $L^p$-oscillations of $f$ in  \eqref{PonceIneq} are replaced by $\phi_\varepsilon(|x-y|)$ in \eqref{SharpPPkLp2}. The fact that $\phi_\varepsilon$ is defined as an integral of type $\int_t^{\ell(Q)}$ (rather than $\int_t^\infty$) supplies us with the desired remainder term in Poincar\'e--Ponce inequalities. This will become apparent while working with particular choices of $\rho_\varepsilon$.
\end{remark}

\begin{remark}
We claim that  \eqref{SharpPPk} consists of an improvement of the Poincar\'e inequality \eqref{ThmSharpPPPhike} (in particular, \eqref{SharpPPkLp} sharpens \eqref{Poincare}). Indeed, we have
	\begin{equation}\label{SobSharp}
	 \ell(Q)^{kp}
\int_{|h| \leq \ell(Q)}  \left\|\Delta^k_h f \right\|_{X(Q(k, h))}^p
\phi_{\varepsilon} (|h|) \,dh \lesssim \ell(Q)^{k p} \,  \|\nabla^k f\|^p_{X(Q)}
	\end{equation}
	for every $\varepsilon > 0$. To see this, we can use $\|\Delta^k_h f\|_{X(Q(k, h))} \lesssim |h|^k \|\nabla^k f\|_{X(Q)}$,  \eqref{gsfg}, Fubini's theorem, a simple change of variables and \eqref{AssRho1} to get
	\begin{align*}
		\int_{|h| \leq \ell(Q)}  \left\|\Delta^k_h f \right\|_{X(Q(k, h))}^p
\phi_{\varepsilon} (|h|) \,dh &\lesssim   \big\|\nabla^k f \big\|_{X(Q)}^p \int_0^{\ell(Q)}  \phi_{\varepsilon} (t) t^{N-1} \,  dt \\
& \approx    \big\|\nabla^k f \big\|_{X(Q)}^p  \int_0^{\ell(Q)^k} \rho_\varepsilon(t) \, dt =  \big\|\nabla^k f \big\|_{X(Q)}^p.
	\end{align*}
	 Furthermore, we will show in Section \ref{SubSection6.1} (specifically, the local version of Theorem \ref{ThmGenSobk}) that (under the additional assumption \eqref{AssRho} and $X$  has an absolutely continuous norm)
	$$
		\lim_{\varepsilon \to 0^+}  	\int_{|h| \leq \ell(Q)}  \left\|\Delta^k_h f \right\|_{X(Q(k, h))}^p
\phi_\varepsilon (|h|) \,dh \approx   \|\nabla^k f\|^p_{X(Q)}.
	$$
	Therefore, \eqref{ThmSharpPPPhike} (respectively,  \eqref{Poincare}) can be obtained from \eqref{SharpPPk} (respectively, \eqref{SharpPPkLp}) by taking limits as $\varepsilon \to 0^+$.
\end{remark}

 The proof of Theorem \ref{ThmSharpPPPhik} is presented in Section \ref{Subsection:StaDisc}.
In Section \ref{SectionCompare}, we show that the theorem can be   easily  implemented for standard choices of $\{\rho_\varepsilon\}_{\varepsilon > 0}$. Then we compare the  outcomes with the corresponding results from \cite{Ponce} and observe that Theorem \ref{ThmSharpPPPhik} yields sharper estimates than \cite{Ponce}.

\subsection{Gaussian Sobolev inequalities}\label{Section1.3}

Let $(\R^N, \gamma_N)$ be the \emph{Gauss space}, i.e., $\R^N$ equipped with the Gauss measure
$
	d \gamma_N (x) = (2 \pi)^{-\frac{N}{2}} \, e^{-\frac{|x|^2}{2}} dx.
$
We consider the \emph{Ornstein-Uhlenbeck operator}
\begin{equation}\label{OUIntro}
	\mathcal{L} u := \Delta u - x \cdot \nabla u,
\end{equation}
which  generates the \emph{Ornstein-Uhlenbeck semigroup}
\begin{equation}\label{OUIntro2}
	G_t f (x) :=  (1-e^{-2 t})^{-N/2} \int_{\R^N} e^{-\frac{e^{-2 t} (|x|^2 + |y|^2 -2 x \cdot y)}{1-e^{-2 t}}} f(y) \, d \gamma_N(y).
\end{equation}

Logarithmic Sobolev inequalities have a long history, which goes back to the seminal paper  by Gross \cite{Gross}. In particular,
 it is known that
   there exists  a constant $C$,  independent of $N$, such that
\begin{equation}\label{GaussSobolev}
	\|f-m(f)\|_{L^p (\log L)^p(\R^N, \gamma_N)} \leq C \,  \|\mathcal{L} u\|_{L^p(\R^N, \gamma_N)}, \qquad p \in (1, \infty).
\end{equation}
Here, $m(f) := \int_{\R^N} f \, d \gamma_N$  and $L^p (\log L)^p(\R^N, \gamma_N)$ is the \emph{Zygmund space} equipped with
$$
	\|f\|_{L^p (\log L)^p(\R^N, \gamma_N)} :=  \bigg( \int_0^{1} [(1-\log t) \, f^{*}_{\gamma_N}(t)]^p \, dt \bigg)^{\frac{1}{p}}.
$$
As usual, $f^*_{\gamma_N}$ is the \emph{non-increasing rearrangement} of $f$ with respect to $\gamma_N$.
We do not comment here on the far-reaching applications of logarithmic Sobolev inequalities, instead we refer the reader to \cite{Bogachev} and \cite{Ledoux}.

A natural  interesting  question is to establish  Ponce's analogues of \eqref{GaussSobolev}, i.e.,  quantitative logarithmic Sobolev inequalities  that converge to \eqref{GaussSobolev}.
Here we note  that
 many BBM-type results admit natural extensions to the setting of metric measure spaces. However, these arguments strongly rely  on the fact that the underlying measure satisfies the doubling condition, which is not the case for the Gaussian measure.
 In this paper, we  overcome these obstructions to obtain the following result.

\begin{thm}\label{ThmGSL}
	Let $p \in (1, \infty)$ and let $\{\rho_\varepsilon\}_{\varepsilon > 0}$ be any family of non-negative functions with $\emph{supp } \rho_\varepsilon \subset (0, 1)$. Define
	\begin{equation}\label{UpsilonDef}
		\Upsilon_\varepsilon (t) :=   (1-\log t)^p \int_0^{(1-\log t)^{-1}} \rho_\varepsilon(u) \, du + \int_{(1-\log t)^{-1}}^{1} \frac{\rho_\varepsilon(u)}{u^p} \, du, \qquad t \in (0, 1).
	\end{equation}
	Then there exists a positive constant $C$, which is independent of  $\varepsilon > 0$ and $N$,  such that
	\begin{equation}\label{ThmGSL1eps}
			\int_0^{1}  [f-m(f)]^{* p}_{\gamma_N}(t) \, \Upsilon_\varepsilon(t) \, dt \leq C   \int_{\R^N} \int_0^1 \frac{|f(x)-G_t f(x)|^p}{t^p}
 \, \rho_\varepsilon(t) \, dt \, d\gamma_N(x)
	\end{equation}
	for every $f \in L^p(\R^N, \gamma_N)$.
\end{thm}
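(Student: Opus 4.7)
The plan is to reduce \eqref{ThmGSL1eps} to a pointwise-in-$u$ inequality via Fubini, and then recognize the reduced inequality as a $K$-functional estimate --- the same spirit as Theorem \ref{ThmIntPoincare}. A direct computation shows that
$$
\Upsilon_\varepsilon(t) = \int_0^1 \min\!\bigl((1-\log t)^p,\, u^{-p}\bigr)\, \rho_\varepsilon(u)\, du,
$$
so by Fubini it will suffice to prove, uniformly in $u \in (0,1)$, the pointwise estimate
$$
\int_0^1 g^{*p}_{\gamma_N}(t)\, \min\!\bigl((1-\log t)^p,\, u^{-p}\bigr)\, dt \leq \frac{C}{u^p}\, \|f-G_u f\|^p_{L^p(\gamma_N)}, \qquad (\star)
$$
where $g := f - m(f)$ and $C$ is independent of $N$.

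After multiplying $(\star)$ by $u^p$ and splitting the $\min$ at the balance point $\sigma(u) := e^{1 - 1/u}$ (where the two entries coincide), the left-hand side becomes
$$
\int_0^{\sigma(u)} g^{*p}_{\gamma_N}(t)\, dt + u^p \int_{\sigma(u)}^1 g^{*p}_{\gamma_N}(t)(1-\log t)^p\, dt.
$$
By Holmstedt's formula for the compatible couple $(L^p(\gamma_N), L^p(\log L)^p(\gamma_N))$, this expression is equivalent (with absolute constants) to $K\bigl(u, g;\, L^p, L^p(\log L)^p\bigr)^p$. Hence $(\star)$ reduces to the $K$-functional bound
$$
K\!\bigl(u, g;\, L^p(\gamma_N),\, L^p(\log L)^p(\gamma_N)\bigr) \leq C\, \|f - G_u f\|_{L^p(\gamma_N)}. \qquad (\dagger)
$$

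For $(\dagger)$, I would use the semigroup decomposition $g = (f - G_u f) + (G_u f - m(f))$. Since the Ornstein--Uhlenbeck semigroup preserves the Gaussian mean, $m(G_u f) = m(f)$, so the second summand has mean zero. Applying \eqref{GaussSobolev} to $G_u f$ gives $\|G_u f - m(f)\|_{L^p(\log L)^p(\gamma_N)} \leq C\, \|\mathcal{L} G_u f\|_{L^p(\gamma_N)}$, and plugging this into the definition of the $K$-functional reduces $(\dagger)$ to the semigroup estimate
$$
u\, \|\mathcal{L} G_u f\|_{L^p(\gamma_N)} \leq C_p\, \|f - G_u f\|_{L^p(\gamma_N)}, \qquad (\ddagger)
$$
with $C_p$ depending only on $p$.

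The main obstacle is establishing $(\ddagger)$ with an $N$-independent constant. The bare analyticity of $\{G_u\}$ only yields $\|\mathcal{L} G_u f\|_p \leq (C/u)\|f\|_p$; to upgrade $\|f\|_p$ to $\|f - G_u f\|_p$, I would factor $\mathcal{L} G_u = \bigl[\mathcal{L} G_u (I - G_u)^{-1}\bigr] \circ (I - G_u)$ on the mean-zero subspace and observe that the spectral multiplier $\lambda \mapsto \lambda e^{-\lambda u}/(1 - e^{-\lambda u})$ is bounded by $C/u$ on $[1,\infty)$ (using the unit spectral gap of $\mathcal{L}$). On $L^2$ this is immediate from the Hermite diagonalization; on $L^p$ for $p \in (1, \infty)$ it follows from the $H^\infty$-functional calculus (Meyer-type multiplier theorem) for the Ornstein--Uhlenbeck operator, whose constants depend only on $p$ and not on the dimension $N$.
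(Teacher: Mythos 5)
Your proposal is correct and reaches the inequality, but through a route whose key analytic step differs from the paper's, so a comparison is worthwhile. The paper views $Tf = f - m(f)$ as bounded both $L^p(\gamma_N) \to L^p(\gamma_N)$ and $W_\mathcal{L}L^p(\gamma_N) \to L^p(\log L)^p(\gamma_N)$ (the latter is \eqref{GaussSobolev}), deduces $K(t, Tf; L^p, L^p(\log L)^p) \lesssim K(t, f; L^p, W_\mathcal{L}L^p)$, invokes the Ditzian--Ivanov strong converse inequality $K(t, f; L^p, W_\mathcal{L}L^p) \approx \|f - G_t f\|_p$ as a black box (citing Kosov for the $N$-independence of the analyticity constants), and then expands the left-hand $K$-functional via a limiting-interpolation Holmstedt formula from \cite{DominguezTikhonov}. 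Your proposal shares the Holmstedt ingredient and the Fubini/change-of-variables computational core (your $\min$-reformulation of $\Upsilon_\varepsilon$ is exactly the substitution $t \mapsto (1-\log t)^{-1}$), but you unpack the Ditzian--Ivanov box: the splitting $g = (f - G_u f) + (G_u f - m(f))$, with \eqref{GaussSobolev} applied to the mean-zero summand $G_u f - m(f)$, reduces $(\dagger)$ to the Bernstein-type estimate $(\ddagger)$, which you then attack by observing that $u\,\mathcal{L}G_u(I-G_u)^{-1}$ has spectral symbol $u\lambda e^{-u\lambda}/(1-e^{-u\lambda})$, uniformly bounded by $1$, and appealing to the dimension-free $L^p$-functional calculus for $-\mathcal{L}$ (Meyer's theorem / the $H^\infty$-calculus). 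This is genuinely more self-contained in that the semigroup estimate being used is made explicit; the tradeoff is that you now rely on the dimension-free multiplier theory for the Ornstein--Uhlenbeck operator, an input of comparable depth to the strong converse inequality that the paper cites. Two small caveats: the limiting-interpolation Holmstedt formula actually gives $K(u, g; L^p, L^p(\log L)^p)^p \approx [\text{your expression}] + u^p\|g\|_{L^p}^p$, so your expression is only $\lesssim K^p$ rather than $\approx K^p$ --- but the one-sided bound is all the argument needs; and $(\ddagger)$ is in substance the nontrivial direction of the Ditzian--Ivanov formula, so both proofs ultimately rest on the same semigroup estimate, reached by different means.
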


We note that $\Upsilon_\varepsilon (t)$ is decreasing on $(0, 1)$ but
$\Upsilon_\varepsilon (t)(1-\log t)^{-p}$ is increasing.
The proof of Theorem \ref{ThmGSL} is given in Section \ref{ProofG}.  Again, it relies  on the self-improving method given in Theorem \ref{ThmIntPoincare}  together with some limiting interpolation techniques from \cite{DominguezTikhonov}. In Section \ref{Section42} (see Theorem \ref{Theorem49}), we show that Theorem \ref{ThmGSL} improves the classical logarithmic Sobolev inequality \eqref{GaussSobolev}, in the sense that the latter follows from \eqref{ThmGSL1eps} by taking limits as $\varepsilon \to 0^+$.

Several applications of Theorem \ref{ThmGSL} are given  in Section \ref{Section43}. For example,  the special choices $\rho_\varepsilon(t) = \varepsilon t^{\varepsilon-1} \mathbf{1}_{(0, 1)}(t)$  and  $\rho_\varepsilon (t) = \varepsilon t^{p-\varepsilon-1} \mathbf{1}_{(0, 1)}(t)$ yield (cf. Corollary \ref{CorGauss1})
 \begin{equation}\label{Intro129}
	\int_0^{1} \{ (1-\log t)^{\varepsilon} [f-m(f)]^*_{\gamma_N} (t)\}^p \, dt \lesssim   \varepsilon (1- \varepsilon) \, \int_{\R^N} \int_0^1 \frac{|f(x)-G_t f(x)|^p}{t^{ \varepsilon p}} \, \frac{dt}{t} \, d \gamma_N(x)
	\end{equation}
	for all $\varepsilon \in (0, 1)$. In particular,
		 \begin{equation}\label{Intro130}
	\|f-m(f) \|_{L^p(\R^N, \gamma_N)}^p \lesssim   \varepsilon (1- \varepsilon) \, \int_{\R^N} \int_0^1 \frac{|f(x)-G_t f(x)|^p}{t^{ \varepsilon p}} \, \frac{dt}{t} \, d \gamma_N(x).
	\end{equation}
Note that the double integral on the right-hand side of \eqref{Intro129}  and \eqref{Intro130} defines a classical Gaussian Besov space\footnote{As usual in Gaussian analysis, the classical differences $|f(x)-f(y)|$ in $\R^N$ are  replaced by $|f(x)-G_t f(x)|$ in $(\R^N, \gamma_N)$.}. Interestinlgy, \eqref{Intro129} exhibits a new phenomenon that is not true in the BBM inequality \eqref{Intro110}: the gain $\varepsilon (1-\varepsilon)$ as $\varepsilon \to 0^+$ and $\varepsilon \to 1^-$. The explanation why the prefactor $\varepsilon$ appears in \eqref{Intro129} is that Theorem \ref{ThmGSL} holds without any further assumption on $\{\rho_\varepsilon\}_{\varepsilon > 0}$, while Theorem \ref{ThmSharpPPPhik} requires the normalization condition  \eqref{AssRho1} (which
is not true for $\rho_\varepsilon (t) = \varepsilon t^{p-\varepsilon-1} \mathbf{1}_{(0, 1)}(t)$).
	
We mention that  a complete  treatment (without the BBM phenomenon) on embeddings for Gaussian Besov spaces can be found in Mart\'in and Milman \cite[Chapter 6]{MartinMilman}.
Another approach has been recently proposed by Bogachev, Kosov, and Popova \cite{Bogachev19, Kosov}. In particular, \eqref{Intro129}  answers affirmatively the question raised by Kosov in \cite{Kosov}; see the discussion after Corollary \ref{CorGauss1}.
	
	\subsection{John--Nirenberg inequalities sharpened}\label{Section1.4}
	We start by recalling the definition of $\text{BMO}(\R^N)$ ($=$ \emph{bounded mean oscillation}) of John and Nirenberg \cite{JohnNirenberg}. We say that a locally integrable function $f$ belongs to $\text{BMO}(\R^N)$ if
\begin{equation}\label{BMODef}
	\|f\|_{\text{BMO}(\R^N)} = \|f^{\#} \|_{L^\infty(\R^N)} < \infty,
\end{equation}
where $f^{\#}$ is the \emph{Fefferman--Stein maximal function} defined by
\begin{equation}\label{SMax}
	f^{\#} (x) :=  \sup_{Q \ni x} \,  \fint_Q |f-f_Q|, \qquad x \in \R^N.
\end{equation}

	The celebrated John--Nirenberg inequality for $\text{BMO}(\R^N)$ asserts that\footnote{Using the well-known fact that $e^L$, the Orlicz space of exponentially integrable functions, can be characterized as
$$
	\|f\|_{e^L(Q)} \approx \sup_{p > 1} \frac{\|f\|_{L^p(Q, \frac{dx}{|Q|})}}{p},
$$
the assertion \eqref{JN} can be rephrased in terms of (local) embeddings from $\text{BMO}(\R^N)$  to $e^L(Q)$.} there exists a purely dimensional constant $c_N$ such that, for any $p < \infty$,
\begin{equation}\label{JN}
	\bigg(\fint_Q |f-f_Q|^p \bigg)^{1/p} \leq c_N  p \, \|f\|_{\text{BMO}(\R^N)}
\end{equation}
for all cubes $Q$.

In this paper, we  show that \eqref{JN} can be sharpened using Theorem \ref{ThmIntPoincare}. Specifically, we obtain the following results.

\begin{thm}\label{ThmJNSharp}
Let $p \in  [1, \infty)$ and let $\{\rho_\varepsilon\}_{\varepsilon > 0}$ be a family of functions satisfying \eqref{AssRho1} with $\emph{supp } \rho_\varepsilon \subset (0, 1)$.  Define
\begin{equation}\label{ThmJNSharp1}
	\eta_{\varepsilon, p} (t) = \int_{t^{\frac{1}{p}}}^1 \frac{\rho_\varepsilon(u) }{u^p} \, du, \qquad \forall\, t \in (0, 1).
\end{equation}
Therefore there exists a constant $C_N$, depending only on $N$, such that
\begin{equation}\label{ThmJNSharp2}
	\bigg(\fint_Q |f-f_Q|^p \bigg)^{\frac{1}{p}} \leq C_N p \, \bigg(\frac{1}{|Q|} \, \int_0^{|Q|} f^{\#*}(t)^p \, \eta_{\varepsilon, p} \Big(\frac{t}{|Q|} \Big) \, dt \bigg)^{\frac{1}{p}}
	\end{equation}
	for every $f \in L^p(\R^N), \varepsilon > 0,$ and $Q$.
\end{thm}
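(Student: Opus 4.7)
The plan is to apply Theorem \ref{ThmIntPoincare} to the linear operator $Tf := (f - f_Q)\mathbf{1}_Q$ viewed as mapping into the target $B := L^p(Q, dx/|Q|)$, with the compatible couple $A_0 := L^p(\R^N, dx/|Q|)$ and $A_1 := \mathrm{BMO}(\R^N)$. On the one hand, the elementary bound $|Tf(x)| \le |f(x)| + \fint_Q |f|$ gives $\|T\|_0 \le 2$; on the other, the classical John--Nirenberg inequality \eqref{JN} yields $\|T\|_1 \le c_N p$. Since the constant $c_N$ of \eqref{JN} may be arranged to satisfy $c_N \ge 2$, we have $\|T\|_1/\|T\|_0 \ge 1$ for every $p \ge 1$, so the hypothesis $\supp \rho_\varepsilon \subset (0,1) \subset (0, \|T\|_1/\|T\|_0)$ of Theorem \ref{ThmIntPoincare} is in force. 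Applying that theorem gives
\[
\bigg(\fint_Q |f-f_Q|^p\bigg)^{1/p} \le c_N\, p\, \bigg(\int_0^1 \bigg(\frac{K(t, f; A_0, A_1)}{t}\bigg)^p \rho_\varepsilon(t)\,dt\bigg)^{1/p}.
\]

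The key input from the theory of $\mathrm{BMO}$ is the Holmstedt-type $K$-functional estimate
\[
K(t, f; L^p(\R^N, dx), \mathrm{BMO}(\R^N)) \lesssim \bigg(\int_0^{t^p} f^{\#*}(s)^p\,ds\bigg)^{1/p},
\]
due essentially to Bennett, DeVore and Sharpley; the upper bound is established by selecting a Calder\'on--Zygmund decomposition at level $\lambda = f^{\#*}(t^p)$ so that the good part lies in $L^p$ with the desired norm and the bad part is controlled in $\mathrm{BMO}$. A rescaling of the underlying measure by the factor $|Q|^{-1}$ in the first component transfers this to
\[
\bigg(\frac{K(t, f; A_0, A_1)}{t}\bigg)^p \lesssim \frac{1}{t^p |Q|}\int_0^{t^p |Q|} f^{\#*}(s)^p\,ds.
\]

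Substituting into the first display and interchanging the order of integration via the identification of the region $\{0 < t < 1,\ 0 < s < t^p |Q|\}$ with $\{0 < s < |Q|,\ (s/|Q|)^{1/p} < t < 1\}$, the inner $t$-integral becomes exactly $\eta_{\varepsilon, p}(s/|Q|)$ by the definition \eqref{ThmJNSharp1}, giving
\[
\int_0^1 \bigg(\frac{K(t,f;A_0,A_1)}{t}\bigg)^p \rho_\varepsilon(t)\,dt \lesssim \frac{1}{|Q|}\int_0^{|Q|} f^{\#*}(s)^p\, \eta_{\varepsilon, p}(s/|Q|)\,ds.
\]
Taking $p$-th roots and combining with the first display yields \eqref{ThmJNSharp2} with an appropriate dimensional constant $C_N$.

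The principal obstacle is the upper $K$-functional estimate invoked in the second paragraph. Its lower analogue is almost immediate from the pointwise inequality $f^\# \le c(M f_0 + \|f_1\|_{\mathrm{BMO}})$ combined with Hardy's inequality (at least for $p > 1$), but the upper bound requires the genuine Bennett--DeVore--Sharpley stopping-time decomposition adapted to the level sets of $f^\#$; this is where the deeper structure of $\mathrm{BMO}$ enters, and it is the only substantive ingredient beyond the elementary self-improving mechanism furnished by Theorem \ref{ThmIntPoincare}.
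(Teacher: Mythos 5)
Your argument is correct in structure but takes a genuinely different route from the paper's, and one point needs closer justification before it is airtight.

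The paper applies Theorem~\ref{ThmIntPoincare} to the \emph{trivial} inclusion $T\colon L^p(\R^N)\to L^p(Q)$ and $T\colon L^\infty(\R^N)\to L^p(Q)$ (both with norm~$1$ after normalizing $|Q|=1$), uses the elementary $K$-functional formula $K(t,f;L^p,L^\infty)\approx\bigl(\int_0^{t^p}(f^*)^p\bigr)^{1/p}$ (with constants uniform in $p$ and $N$), applies the resulting inequality to $f^{\#}$, and only at the end invokes the Fefferman--Stein inequality $\|f-f_Q\|_{L^p(Q)}\le C_N p\,\|f^{\#}_Q\|_{L^p(Q)}$ together with $f^{\#}_Q\le f^{\#}$ to produce the oscillation on the left and the explicit factor $p$. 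You instead take $Tf=(f-f_Q)\mathbf{1}_Q$ with the couple $(L^p(\R^N,dx/|Q|),\mathrm{BMO}(\R^N))$, so the factor $p$ enters immediately through John--Nirenberg as the bound on $\|T\|_1$, and the deep BMO input is the Bennett--DeVore--Sharpley (or Jawerth--Torchinsky) estimate $K(t,f;L^p,\mathrm{BMO})\lesssim\bigl(\int_0^{t^p}(f^{\#*})^p\bigr)^{1/p}$. The Fubini step and the rescaling of the first coordinate by $|Q|^{-1/p}$ are correctly carried out and produce exactly $\eta_{\varepsilon,p}$. A minor remark: Theorem~\ref{ThmIntPoincare} is stated with the exact norms $\|T\|_0,\|T\|_1$, but since the only ingredient in its proof is $K(t,Tf;B,B)\le M_0\,K(M_1t/M_0,f;A_0,A_1)$ for any admissible upper bounds $M_0\ge\|T\|_0$, $M_1\ge\|T\|_1$, you are right that it applies with $M_0=2$, $M_1=c_Np$ and $\supp\rho_\varepsilon\subset(0,1)\subset(0,M_1/M_0)$.

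The one substantive issue you should tie down: for the final constant to be $C_N p$ with $C_N$ independent of $p$, your upper $K$-functional bound for $(L^p,\mathrm{BMO})$ must hold with a constant that is uniform in $p$. In the paper's route this is automatic (the $(L^p,L^\infty)$ formula has universal constants, and every $p$-dependence is made explicit through Fefferman--Stein), whereas in yours the $p$-dependence is hidden inside the BDS decomposition. The ``easy'' (lower) direction of the BDS equivalence does degenerate as $p\to1^+$ (it uses the $L^p$-boundedness of the Hardy--Littlewood maximal operator), but that direction is not used; you need the ``hard'' (upper) direction, for which the decomposition $f=g+b$ at level $\lambda=f^{\#*}(t^p)$ does give $\|b\|_{\mathrm{BMO}}\le C_N\lambda$ with a purely dimensional constant, and then $t\|b\|_{\mathrm{BMO}}\le C_N t\,f^{\#*}(t^p)\le C_N\bigl(\int_0^{t^p}(f^{\#*})^p\bigr)^{1/p}$ by monotonicity, while $\|g\|_{L^p}\lesssim_N\bigl(\int_0^{t^p}(f^{\#*})^p\bigr)^{1/p}$ must be checked to carry a $p$-uniform constant. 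This is the step you should verify carefully or cite with the dependence of the constant made explicit; otherwise you lose the clean $C_N p$ in the conclusion.
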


 Estimate \eqref{ThmJNSharp2} improves the classical John--Nirenberg inequality \eqref{JN} as stated in the next  assertion.

\begin{thm}\label{ThmExtrJN}
	Let $p \in [1, \infty)$ and let $\{\rho_\varepsilon\}_{\varepsilon > 0}$ be a family of functions satisfying \eqref{AssRho1} with $\emph{supp } \rho_\varepsilon \subset (0, 1)$ and related family $\{ \eta_{\varepsilon, p} \}_{\varepsilon > 0}$ (cf. \eqref{ThmJNSharp1}).
Therefore
\begin{equation}\label{ThmExtrJN1}
	  \bigg(\frac{1}{|Q|} \, \int_0^{|Q|} f^{\#*}(t)^p \, \eta_{\varepsilon, p} \Big(\frac{t}{|Q|} \Big) \, dt \bigg)^{\frac{1}{p}} \leq \|f\|_{\emph{BMO}(\R^N)}
\end{equation}
for every $\varepsilon > 0$ and $Q$. If, in addition,  $\{\rho_\varepsilon\}_{\varepsilon > 0}$ also satisfies \eqref{AssRho}, then
\begin{equation}\label{ThmExtrJN2}
	\lim_{\varepsilon \to 0^+} \,   \bigg(\frac{1}{|Q|} \, \int_0^{|Q|} f^{\#*}(t)^p \, \eta_{\varepsilon, p} \Big(\frac{t}{|Q|} \Big) \, dt \bigg)^{\frac{1}{p}}  = \|f\|_{\emph{BMO}(\R^N)}.
 \end{equation}
\end{thm}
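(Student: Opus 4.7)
The plan is to reduce both assertions to a single identity obtained by Fubini's theorem. After the substitution $s=t/|Q|$, the middle quantity in \eqref{ThmExtrJN1}--\eqref{ThmExtrJN2} becomes
\[
J_\varepsilon(f,Q):=\int_0^1 f^{\#*}(|Q|s)^p\,\eta_{\varepsilon,p}(s)\,ds,
\]
and unfolding \eqref{ThmJNSharp1} by swapping the order of integration (the region $\{(s,u):0<s<1,\,s^{1/p}<u<1\}$ equals $\{(s,u):0<u<1,\,0<s<u^p\}$) yields
\[
J_\varepsilon(f,Q)=\int_0^1 F(u)\,\rho_\varepsilon(u)\,du,\qquad F(u):=\frac{1}{u^p}\int_0^{u^p} f^{\#*}(|Q|s)^p\,ds.
\]
The weight $1/u^p$ built into $\eta_{\varepsilon,p}$ is exactly the reciprocal of the length of $(0,u^p)$, so the full expression is a probability average of the Hardy-type mean $F$ against $\rho_\varepsilon$.

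For inequality \eqref{ThmExtrJN1} I would invoke the trivial bound $f^{\#*}(\tau)\leq \|f^\#\|_{L^\infty}=\|f\|_{\mathrm{BMO}(\R^N)}$ coming from \eqref{BMODef}, which gives $F(u)\leq \|f\|_{\mathrm{BMO}(\R^N)}^p$ uniformly in $u$; combined with the normalization $\int_0^1\rho_\varepsilon=1$ from \eqref{AssRho1} together with $\operatorname{supp}\rho_\varepsilon\subset(0,1)$, this immediately yields $J_\varepsilon(f,Q)\leq \|f\|_{\mathrm{BMO}(\R^N)}^p$.

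For the limit \eqref{ThmExtrJN2}, the upper bound $\limsup_{\varepsilon\to 0^+}\leq\|f\|_{\mathrm{BMO}(\R^N)}$ is already furnished by \eqref{ThmExtrJN1}, so only the matching lower bound requires work. Since $f^{\#*}$ is right-continuous and non-increasing, the standard identity $\lim_{\tau\to 0^+}f^{\#*}(\tau)=\|f^\#\|_{L^\infty}=\|f\|_{\mathrm{BMO}(\R^N)}$ holds, and hence $F(u)\to\|f\|_{\mathrm{BMO}(\R^N)}^p$ as $u\to 0^+$. Given $\delta>0$, I would pick $a\in(0,1)$ with $F(u)\geq \|f\|_{\mathrm{BMO}(\R^N)}^p-\delta$ on $(0,a)$ and estimate
\[
J_\varepsilon(f,Q)\geq \bigl(\|f\|_{\mathrm{BMO}(\R^N)}^p-\delta\bigr)\int_0^a\rho_\varepsilon(u)\,du.
\]
By assumption \eqref{AssRho}, the right-hand side tends to $\|f\|_{\mathrm{BMO}(\R^N)}^p-\delta$ as $\varepsilon\to 0^+$, and sending $\delta\to 0^+$ closes the argument.

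I do not foresee any serious obstacle: the only subtle point is the identification $\lim_{\tau\to 0^+}f^{\#*}(\tau)=\|f^\#\|_{L^\infty}$, which is a classical right-continuity property of the non-increasing rearrangement of a bounded function, and the case $\|f\|_{\mathrm{BMO}(\R^N)}=\infty$ is handled trivially since both sides of \eqref{ThmExtrJN1} are then infinite by the same chain of bounds. The crucial structural observation is that $\eta_{\varepsilon,p}$ has been designed precisely so that the $1/u^p$ factor averages $(f^{\#*})^p$ over the correct rescaled window, which is what converts the normalization and concentration of $\rho_\varepsilon$ into the sharp constants.
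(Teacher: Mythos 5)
Your proof is correct and follows essentially the same approach as the paper: the Fubini rewrite $J_\varepsilon(f,Q)=\int_0^1 F(u)\,\rho_\varepsilon(u)\,du$ is identical (up to scaling) to the paper's identity $\int_0^1 (f^{\#*}(t))^p\eta_{\varepsilon,p}(t)\,dt=\int_0^1 (g(t)/t)^p\rho_\varepsilon(t)\,dt$, and both conclusions rest on $f^{\#*}(0)=\|f\|_{\mathrm{BMO}(\R^N)}$. The only cosmetic difference is that you carry out the $\delta$--$a$ limit argument by hand whereas the paper reduces to $|Q|=1$ and invokes Lemma \ref{LemmaReal}(i), which encapsulates precisely that argument.
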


	The proofs of Theorems \ref{ThmJNSharp} and \ref{ThmExtrJN} are given in Sections \ref{Section51} and \ref{Section52}, respectively. In Section \ref{Section53} we give several examples of \eqref{ThmJNSharp2} for special choices of $\{\rho_\varepsilon\}_{\varepsilon > 0}$.
	
	\subsection{New limiting formulas}
	
 In order to show that Theorem \ref{ThmSharpPPPhik} improves the classical Poincar\'e inequalities,
   we need to obtain BBM-type formulas (cf. \eqref{BBM}) related to the families of functionals
		\begin{equation}\label{142new}
		\int_{\R^N} \left\|\Delta_h^k f\right\|_X^p
\, \phi_\varepsilon(|h|) \, dh, \qquad \varepsilon > 0,
	\end{equation}
	where $X$ is a r.i. space and $\phi_\varepsilon$ is given by \eqref{gsfg}. This question is interesting by its own sake and is also motivated by  the recent results in \cite{DGPYYZ}, where a similar problem was considered for certain variants of \eqref{142new} with $k=1$ (cf. the discussion in Remark \ref{Remark627}). Accordingly,
we obtain the  required BBM-type formula (see Section \ref{SubSection6.1})
	\begin{equation}\label{143new}
		\lim_{\varepsilon \to 0^+}
\int_{\R^N} \left\|\Delta_h^k f\right\|_X^p
\, \phi_\varepsilon(|h|) \, dh  \approx
\left\|\nabla^k f\right\|_X^p.
	\end{equation}
	In fact we get a stronger statement with explicit constants.
	
The next natural step is  to obtain the Maz'ya--Shaposhnikova counterparts for \eqref{143new}, i.e., limiting formulas of type  \eqref{143new} for $\left\|f\right\|_X^p$ rather than $\left\|\nabla^k f\right\|_X^p$; see \eqref{MSClassic}. To do this, we  introduce the family of weights $\{\varphi_\varepsilon\}_{\varepsilon > 0}$  playing the role of $\{\phi_\varepsilon\}_{\varepsilon > 0}$ in \eqref{143new}.  Assume first that $\{\psi_\varepsilon\}_{\varepsilon > 0}$ is a family of non-negative functions on $(0, \infty)$ satisfying
	\begin{equation}\label{AssPsi1}
		\int_0^\infty \psi_\varepsilon(t) \, dt = 1
	\end{equation}
	and, for every $a > 1$,
	\begin{equation}\label{AssPsi}
	\lim_{\varepsilon \to 0^+} \, \int_{a}^\infty \psi_\varepsilon (t) \, dt = 1.
	\end{equation}
	Examples of these families are collected in Appendix \ref{SectionAA} (cf. Examples \ref{Ex1Dual}). Then (cf. Theorem \ref{ThmGenSobMSk})
	\begin{equation}\label{144new}
		\lim\limits_{\varepsilon\to0^+}
\int_{\R^N}\left\|\Delta^k_h f\right\|_X^p
\varphi_{\varepsilon}(|h|)\,dh
\approx
\left\| f\right\|_X^p,
	\end{equation}
	where $\varphi_\varepsilon$ is given by
	$$
	\varphi_{\varepsilon} (t) = \int_{t^k}^\infty u^{-\frac{N}{k}}
\psi_\varepsilon(u) \, du.
$$
In fact, the exact constant in \eqref{144new} is achieved and can be easily computed for special cases of $X$ (e.g. Lorentz spaces, see Example \ref{3.31}).
To the best of our knowledge, this result is new even for the Lebesgue spaces $X = L^p(\R^N)$ and extends the classical formula \eqref{MSClassic}, which refers to  the choice $\psi_\varepsilon(t) = \varepsilon p t^{-\varepsilon p-1}
{\bf 1}_{(1,\infty)}(t)$ (and $k=1$), to arbitrary families $\{\psi_\varepsilon\}_{\varepsilon > 0}$ with \eqref{AssPsi1}-\eqref{AssPsi}.

Several examples of the formulas \eqref{143new} and \eqref{144new} are collected in Section \ref{Section6.2}. On the other hand,  Section \ref{Section63} refers to an alternative methodology to limiting formulas, where
  the classical differences $\Delta^k_h f$ in \eqref{143new} and \eqref{144new} are now replaced by their averages
$$
	\bigg(\frac{1}{t^{\frac{N}{k}}} \int_{|h| \leq t^{\frac{1}{k}}} |\Delta_h^k f (x)|^p \, dh \bigg)^{\frac{1}{p}},
$$
as frequently used in approximation theory \cite{KolomoitsevTikhonov}.

{A far-reaching extension of \eqref{143new} and \eqref{144new} to the abstract setting of semigroups $\{T_t\}_{t > 0}$  on a  Banach space $X$ is proposed in Section \ref{Section6.4}.  In particular, our method allows us to replace $\nabla^k f$ in BBM formulas by  fractional powers $(-\mathcal{A})^\alpha f, \, \alpha > 0,$ of the corresponding infinitesimal generator $\mathcal{A}$.
This approach
is also motivated by the above mentioned assertion that the classical Gaussian Sobolev inequality \eqref{GaussSobolev} follows from the family of estimates \eqref{ThmGSL1eps} involving the Ornstein-Uhlenbeck semigroup. Our result (see Theorem \ref{ThmSemGen}) states
that,
 under the conditions \eqref{AssRho1} and  \eqref{AssRho}, 
\begin{equation}\label{147new2}
	\lim_{\varepsilon \to 0^+}  \int_0^\infty  \frac{\|[I-T_t]^\alpha f\|^p_X}{t^{\alpha(p-1)}} \,  \rho_\varepsilon (t^\alpha) \, \frac{dt}{t}  = \frac{1}{\alpha} \,  \|(-\mathcal{A})^\alpha f\|^p_{X},
\end{equation}
where $p > 0$ and  $[I-T_t]^\alpha$ is the fractional difference of order $\alpha>0$ (see \eqref{DefFG2}).
Taking $T_t f (x) = f(x+t)$ for $f \in X = L^p(\R)$, we are able to extend the BBM formula \eqref{BBM} from $\nabla f$ to the fractional Laplacian $(-\Delta)^{\alpha/2} f$, see Remark \ref{Remark659}. On the other hand, the choice $T_t = G_t$ (cf. \eqref{OUIntro2}) on $X = L^p(\R^N, \gamma_N)$ and $\alpha =1$ yields (cf. \eqref{OUIntro})
$$
	\lim_{\varepsilon \to 0^+}\, 	 \int_{\R^N} \int_0^1 \frac{|f(x)-G_t f(x)|^p}{t^p}
 \, \rho_\varepsilon(t) \, dt \, d\gamma_N(x) =  \|\mathcal{L} u\|_{L^p(\R^N, \gamma_N)}^p.
$$
 The analogue of \eqref{147new2} for 
   $\|f\|_X$ in place of
   $\|(-\mathcal{A})^\alpha f\|_{X}$ is also obtained.}

We conclude  the paper with two appendices. In Appendix \ref{SectionAA} we collect some standard examples of operators approximating unity that are used repeatedly. In Appendix  \ref{AppendixB} we compute the $K$-functional associated with $(X, \dot{W}^k X)$ in terms of the averaged moduli of smoothness. This result plays a key role in the proof of Theorem \ref{ThmSharpPPPhik}.

\section{Self-improving via $K$-functionals}\label{Section:SelfImp}

We are interested in the following basic question: Can one obtain 
  an abstract approach to proving   normed inequalities of type
\begin{equation}\label{21}
	\|f\|_B \leq C \, \|f\|_{A_1},
\end{equation}
which are self-improving (in some sense)?
A possible answer relies on the classical interpolation theory. Recall that the interpolation space $(A_0, A_1)_{\theta, p},$  $\theta \in (0, 1),$ $p \in [1, \infty)$, is equipped with the following norm
\begin{equation}\label{CRM}
	\|f\|_{(A_0, A_1)_{\theta, p}}  := \bigg(\int_0^\infty [t^{-\theta} K(t, f; A_0, A_1)]^p \, \frac{dt}{t} \bigg)^{\frac{1}{p}}.
\end{equation}
Applying interpolation to  \eqref{21}, one has
\begin{equation}\label{22}
 \|f\|_{(B, B)_{\theta, p}} \leq C^\theta \, \|f\|_{(B, A_1)_{\theta, p}} \qquad \text{for all} \qquad \theta \in (0, 1).
\end{equation}
Since it is known that  (see e.g. \cite[p. 46]{BerghLofstrom})
\begin{equation}\label{22new}
	 \|f\|_{(B, B)_{\theta, p}} = \frac{1}{c_{\theta, p}} \, \|f\|_B
\end{equation}
with $c_{\theta, p} = ( (1-\theta) \theta p)^{1/p}$, 
 we arrive at the following estimate
\begin{equation}\label{25}
	\|f\|_B \leq C^\theta c_{\theta, p}  \, \|f\|_{(B, A_1)_{\theta, p}} \qquad \text{for all} \qquad \theta \in (0, 1).
\end{equation}
In particular, taking limits as $\theta \to 1^{-}$ in \eqref{25} and invoking  Milman's extrapolation formula\footnote{Under  mild assumptions on the pair $(A_0, A_1)$; see Remark \ref{RemarkInt}.} \cite[Theorem 1]{Milman} (see also \eqref{MEx})
\begin{equation}\label{Mil}
		\lim_{\theta \to 1^-} \, c_{\theta, p} \,  \|f\|_{(A_0, A_1)_{\theta, p}} = \|f\|_{A_1},
\end{equation}
we recover the original inequality  \eqref{21} from \eqref{25}. Specializing \eqref{25} with $(B, A_1) = (L^p, \dot{W}^{1, p})$ (and using Lemma \ref{lemmaAk}), we  obtain the classical BBM estimate \eqref{Intro110}. Hence \eqref{25} can be seen as an extension of \eqref{Intro110}  to an interpolation setting.

The above methodology does not work for the Poincar\'e--Ponce inequalities \eqref{PonceIneq}, because
 the weights in  the right-hand side prevent the use of interpolation technique. 
    However, we are able to overcome this obstruction by  putting \eqref{25} in a more general perspective of $K$-functionals. Specifically, note that \eqref{22new} follows from the trivial formula 
\begin{equation}\label{KFunctTB}
	K(t, f; B, B)  = \min \{1, t\} \, \|f\|_{B}, \qquad \forall\,t > 0,
\end{equation}
and (cf. \eqref{CRM})
$$
	\bigg(\int_0^\infty [t^{-\theta}  \min \{1, t\}]^p \, \frac{dt}{t} \bigg)^{\frac{1}{p}} = \frac{1}{c_{\theta, p}}.
$$
Hence, \eqref{KFunctTB} may be viewed as a sharp version of \eqref{22new}, which is independent of interpolation weights. This discussion served as a motivation to establish the self-improvement result stated in
Theorem \ref{ThmIntPoincare}. Now, we give its proof.

\begin{proof}[Proof of Theorem \ref{ThmIntPoincare}]
The linearity of $T$ and \eqref{Linearity} imply
$$
	K(t, T f;  B, B) \leq \|T\|_0 \, K\bigg( \frac{\|T\|_1}{\|T\|_0} \, t, f; A_0, A_1 \bigg)
$$
for $t > 0$, or equivalently (cf. \eqref{KFunctTB})
$$
	\min\{1, t\} \, \|Tf\|_B \leq \|T\|_0 \, K\bigg( \frac{\|T\|_1}{\|T\|_0} \, t, f; A_0, A_1 \bigg).
$$
As a byproduct, we obtain (after a change of variables)
$$
	\bigg(\int_0^{\frac{\|T\|_1}{\|T\|_0}}  \rho_\varepsilon(t) \, dt \bigg) \,  \|Tf\|^p_{B} \leq \|T\|_1^p \, \int_0^{\frac{\|T\|_1}{\|T\|_0}} \bigg[\frac{K (t, f; A_0, A_1)}{t}\bigg]^p \rho_\varepsilon(t) \, dt.
$$
By \eqref{AssRho1} (taking into account that $\text{supp } \rho_\varepsilon \subset \big(0, \frac{\|T\|_1}{\|T\|_0} \big)$), the previous estimate reads
$$
	\|T f\|_{B}^p  \leq  \|T\|_1^{p} \, \int_0^{\frac{\|T\|_1}{\|T\|_0}} \bigg(\frac{K(t, f; A_0, A_1)}{ t}\bigg)^p \rho_\varepsilon (t)  \, dt.
$$
\end{proof}


Next we justify our assertion that Theorem  \ref{ThmIntPoincare} is a self-improving result. Namely,
   we  show that the right-hand side of \eqref{PonceIntIneq} converges to $\|f\|_{A_1}$ as $\varepsilon \to 0^+$.
 This is a 
 consequence of the more general statement given below.

\begin{lem}\label{LemmaReal}
Let $p\in(0,\infty)$,
$\{\rho_\varepsilon\}_{\varepsilon\in(0,\infty)}$
be a family of functions satisfying \eqref{AssRho1}
and \eqref{AssRho}, and let
$\{\psi_\varepsilon\}_{\varepsilon\in(0,\infty)}$ be
a family of functions satisfying \eqref{AssPsi1}
and \eqref{AssPsi}.
\begin{enumerate}
  \item[{\rm(i)}] Let $g$ be a non-negative function on $(0, \infty)$ such that
  	\begin{equation}\label{LimAs2}
		L := \lim_{t \to 0^+} \, \frac{g(t)}{t} \in [0, \infty],
	\end{equation}
 and if $L < \infty$ we assume further that
	\begin{equation}\label{LimAs1}
	\sup_{t > 0} \, \frac{g(t)}{t} < \infty.
	\end{equation}
	Then
	\begin{equation*}
		\lim_{\varepsilon \to 0^+} \, \bigg[\int_0^\infty \bigg(\frac{g(t)}{t} \bigg)^p \rho_\varepsilon(t) \, dt \bigg]^{\frac{1}{p}} = L.
	\end{equation*}
  \item[{\rm(ii)}] Let $h$ be a non-negative function on
  $(0, \infty)$ such that
  	\begin{equation}\label{LimAspsi2}
	\lim_{t \to \infty} h(t) \in [0, \infty],
	\end{equation}
	and if $\lim_{t \to \infty} h(t) < \infty$ we assume further that
	\begin{equation}\label{LimAspsi1}
	\sup_{t > 0} h(t) < \infty.
	\end{equation}
	Then
	\begin{equation*}
		\lim_{\varepsilon \to 0^+} \,
\bigg[\int_0^\infty \left[h(t)\right]
^p \psi_\varepsilon(t) \, dt \bigg]^{\frac{1}{p}} =
\lim\limits_{t\to\infty}h(t).
	\end{equation*}
\end{enumerate}
\end{lem}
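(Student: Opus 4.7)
The plan is to exploit the concentration of $\rho_\varepsilon$ near $0$ (and $\psi_\varepsilon$ near $\infty$), which is a direct consequence of the hypotheses. By \eqref{AssRho1} and \eqref{AssRho}, for every fixed $a>0$ one has $\lim_{\varepsilon\to 0^+}\int_a^\infty\rho_\varepsilon(t)\,dt=0$: for $a<1$ this is \eqref{AssRho} rearranged, and for $a\ge 1$ it follows by monotonicity since the tail is bounded by the tail at some $a'\in(0,1)$. Symmetrically, \eqref{AssPsi1}--\eqref{AssPsi} imply $\lim_{\varepsilon\to 0^+}\int_0^b\psi_\varepsilon=0$ for every $b>0$. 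Once this is in hand, both statements reduce to a standard ``approximate identity'' style argument.

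For part (i), assume first $L<\infty$. Set $C:=\sup_{t>0}g(t)/t$, which is finite by \eqref{LimAs1}. Fix $\eta>0$ and use \eqref{LimAs2} to choose $\delta>0$ with $|g(t)/t-L|<\eta$ for $t\in(0,\delta)$. Split
\[
\int_0^\infty\Bigl(\frac{g(t)}{t}\Bigr)^{p}\rho_\varepsilon(t)\,dt=\int_0^\delta+\int_\delta^\infty.
\]
The tail integral is bounded by $C^p\int_\delta^\infty\rho_\varepsilon\to 0$, while the main integral lies between $(L-\eta)^p\int_0^\delta\rho_\varepsilon$ and $(L+\eta)^p\int_0^\delta\rho_\varepsilon$, both of which tend to $(L\pm\eta)^p$ by \eqref{AssRho1} together with the vanishing tail. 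Sending $\varepsilon\to 0^+$ and then $\eta\to 0^+$ yields the conclusion. If instead $L=\infty$, for any $M>0$ pick $\delta>0$ with $g(t)/t\ge M$ on $(0,\delta)$; then
\[
\int_0^\infty\Bigl(\frac{g(t)}{t}\Bigr)^{p}\rho_\varepsilon(t)\,dt\ge M^p\int_0^\delta\rho_\varepsilon(t)\,dt\longrightarrow M^p
\]
as $\varepsilon\to 0^+$, and $M$ is arbitrary.

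Part (ii) is entirely analogous, after noting that now the mass of $\psi_\varepsilon$ concentrates near $+\infty$. Write $L':=\lim_{t\to\infty}h(t)$. If $L'<\infty$, choose $R>0$ with $|h(t)-L'|<\eta$ for $t\ge R$, split the integral as $\int_0^R+\int_R^\infty$, bound the head by $(\sup h)^p\int_0^R\psi_\varepsilon\to 0$, and sandwich the tail between $(L'\pm\eta)^p\int_R^\infty\psi_\varepsilon$, which converges to $(L'\pm\eta)^p$ by \eqref{AssPsi1} and \eqref{AssPsi}. If $L'=\infty$, for each $M>0$ pick $R$ with $h(t)\ge M$ on $[R,\infty)$ and estimate from below as above.

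The only step that is slightly delicate is verifying the auxiliary fact $\int_a^\infty\rho_\varepsilon\to 0$ for \emph{all} $a>0$ (not just $a\in(0,1)$), since \eqref{AssRho} is stated only for $a\in(0,1)$; this is handled by the monotonicity remark at the start. Apart from that, everything is a routine ``tail/mass'' splitting, and no additional structure on the pair $(A_0,A_1)$ or on the spaces involved is needed at this abstract level.
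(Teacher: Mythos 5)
Your proof is correct and follows essentially the same route as the paper: split the integral at a threshold $\delta$ (respectively $R$), use the concentration of mass of $\rho_\varepsilon$ near $0$ (respectively $\psi_\varepsilon$ near $\infty$) together with \eqref{AssRho1}--\eqref{AssRho} (resp. \eqref{AssPsi1}--\eqref{AssPsi}) to handle the main term, and bound the tail by the sup condition. The only superficial difference is that the paper sandwiches $(g(t)/t)^p$ directly while you sandwich $g(t)/t$; and the paper implicitly takes $\delta<1$ so that \eqref{AssRho} applies verbatim, whereas you make the point about arbitrary $a>0$ explicit via monotonicity of the tail---either remark is fine.
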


\begin{rem}\label{RemarkInt}
	Let
$\{\rho_\varepsilon\}_{\varepsilon\in(0,\infty)}$
be a family of functions satisfying \eqref{AssRho1}
and \eqref{AssRho} with $\text{supp } \rho_\varepsilon \subset \big(0, \frac{\|T\|_1}{\|T\|_0} \big)$. Applying Lemma \ref{LemmaReal}(i) with $g(t) = K(t, f; A_0, A_1)$,  we derive
\begin{equation}\label{MilGen}
	\lim_{\varepsilon \to 0^+} \left(\int_0^{\frac{\|T\|_1}{\|T\|_0}} \bigg(\frac{K(t, f; A_0, A_1)}{t}\bigg)^p \rho_\varepsilon (t ) \, dt \right)^{\frac{1}{p}} = \lim_{t \to 0^+} \, \frac{K(t, f; A_0, A_1)}{t},
\end{equation}
where the right-hand side defines the norm on the so-called \emph{Gagliardo closure} of $A_1$, which can be identified with $A_1$
(
specifically, when $(A_0, A_1)$ is \emph{Gagliardo closed}, see \cite[Chapter 5]{BennettSharpley} for further details). 
\end{rem}

\begin{proof}[Proof of Lemma \ref{LemmaReal}]
We only prove (i),
 the proof of (ii) is similar.
Assume first that $L < \infty$.
Accordingly, given any $\eta > 0$ there exists $\delta > 0$ such that
$$
	\bigg|\bigg( \frac{g(t)}{t} \bigg)^p -L^p \bigg| \leq \eta \qquad \text{for all} \quad t \in (0, \delta).
$$
Therefore,
\begin{equation}\label{230}
	(L^p-\eta) \int_0^\delta \rho_\varepsilon(t) \, dt \leq \int_0^\delta \bigg( \frac{g(t)}{t} \bigg)^p \rho_\varepsilon(t) \, dt \leq (L^p+\eta) \int_0^\delta \rho_\varepsilon(t) \, dt.
\end{equation}
We now let $\varepsilon \to 0^+$ and invoke \eqref{AssRho} (note that $\delta$ is independent of $\varepsilon$)  and then let  $\eta \to 0^+$. We obtain
\begin{equation}\label{231new}
	\lim_{\varepsilon \to 0^+} \,  \int_0^\delta \bigg( \frac{g(t)}{t} \bigg)^p \rho_\varepsilon(t) \, dt = L^p.
\end{equation}

On the other hand, by \eqref{LimAs1} and \eqref{AssRho1},
\begin{equation*}
	\int_\delta^\infty \bigg(\frac{g(t)}{t} \bigg)^p \rho_\varepsilon(t) \, dt \leq \int_\delta^\infty \rho_\varepsilon(t) \, dt \, \sup_{t > 0} \, \bigg(\frac{g(t)}{t} \bigg)^p = \bigg(1 - \int_0^\delta \rho_\varepsilon(t) \, dt \bigg) \, \sup_{t > 0} \, \bigg(\frac{g(t)}{t} \bigg)^p.
\end{equation*}
In light of \eqref{AssRho}, we have
\begin{equation}\label{231}
	\lim_{\varepsilon \to 0^+} \, \int_\delta^\infty \bigg(\frac{g(t)}{t} \bigg)^p \rho_\varepsilon(t) \, dt =0.
\end{equation}

Combining \eqref{231new} and \eqref{231}, we arrive at
$$
	\lim_{\varepsilon \to 0^+} \,  \int_0^\infty \bigg( \frac{g(t)}{t} \bigg)^p \rho_\varepsilon(t) \, dt = L^p.
$$

The proof of the case $L=\infty$ is even easier: for any $M > 0$ there exists $\delta \in (0, 1)$ such that
$$
	\bigg(\frac{g(t)}{t} \bigg)^p \geq M \qquad \text{for all} \quad t \in (0, \delta),
$$
which implies
$$
\int_0^\infty \bigg(\frac{g(t)}{t} \bigg)^p \rho_\varepsilon(t) \, dt \geq	\int_0^\delta  \bigg(\frac{g(t)}{t} \bigg)^p \rho_\varepsilon(t) \, dt \geq M \int_0^\delta \rho_\varepsilon(t) \, dt.
$$
Hence, by \eqref{AssRho},
$$
\lim_{\varepsilon \to 0^+} \, \int_0^\infty \bigg(\frac{g(t)}{t} \bigg)^p \rho_\varepsilon(t) \, dt 	\geq M,
$$
and thus
$$
	\lim_{\varepsilon \to 0^+}\,	\int_0^\infty \bigg(\frac{g(t)}{t} \bigg)^p \rho_\varepsilon(t) \, dt  = \infty,
$$
completing the proof. 
\end{proof}

\section{Poincar\'e--Ponce inequalities}\label{SectionPP}

\subsection{Basic definitions}
Let $k\in\mathbb{N}$ and let $X$ be a r.i. space on $\R^N$, cf. \cite{BennettSharpley}. The \emph{Sobolev-type space} $\dot{W}^kX$ is defined to
be the set of all the distributions $f$ on $\R^N$ such that $\partial^{\alpha}f\in X$,
for any $\alpha\in\mathbb{Z}_+^N$ with $|\alpha|=k$.
 Moreover, for any $f\in\dot{W}^kX$,
let
\begin{align*}
\|f\|_{\dot{W}^kX}:=\sum_{|\alpha|= k}
\left\|\partial^{\alpha}f\right\|_{X}.
\end{align*}
Note that
$$
	\|f\|_{\dot{W}^kX} \approx \left\|\nabla^k f \right\|_X,
$$
where $\nabla^k f$ denotes the vector of all $k$-th order weak derivatives of $f$ and $\|\nabla^k f \|_X$ is a simplified notation to mean $ \| \, |\nabla^k f| \, \|_X$.
The \emph{(inhomogeneous) Sobolev-type} space $W^kX$
is defined to be the set of all the locally integrable
functions $f$ on $\R^N$ such that $\partial^{\alpha}f\in X$, for any $\alpha\in\mathbb{Z}_+^N$
with $|\alpha|\le k$. Moreover,
for any $f\in W^kX$, let
\begin{align*}
\|f\|_{W^kX}:= \|f\|_X + \|f\|_{\dot{W}^kX}.
\end{align*}

Let $U$ be a measurable bounded set in $\R^N$. By $X(U)$ we mean the local version of $X$ equipped with the norm
\begin{equation}\label{LocalX}
\|f\|_{X(U)} := \|f \mathbf{1}_U\|_X.
\end{equation}

The \emph{$k$-th order moduli of smoothness} of $f \in X(Q)$ is defined by
\begin{equation}\label{DefMod}
	\omega_k (f, t)_{X(Q)} := \sup_{|h| < t} \|\Delta_h^k f\|_{X(Q(k, h))}, \qquad t > 0,
\end{equation}
where the  difference $\Delta^k_h$ is given by
\begin{equation}\label{DefDifference}
\Delta^k_h f (x) := \sum_{j=0}^k (-1)^{j+k} \binom{k}{j} f(x+ j h) \qquad \text{for} \qquad x \in Q(k, h).
\end{equation}
When $k=1$, we denote $\Delta^1_h$ simply by
$\Delta_h$. Moreover, the \emph{$k$-th order moduli of smoothness}
$\omega_k(f,t)_X$ of $f \in X$ is defined as in \eqref{DefMod}
via replacing $X(Q(k,h))$ by $X$.

\subsection{Proof of Theorem \ref{ThmSharpPPPhik}}\label{Subsection:StaDisc}

If $T$ denotes the linear operator which maps $f$ to $[f]$, the equivalence class of $f$ with respect to $\mathcal{P}_{k-1}$, then \eqref{ThmSharpPPPhike} can be rewritten as
$$
 T: \dot{W}^{k} X(Q) \to  X(Q)/\mathcal{P}_{k-1}
$$
with related norm dominated by $C \ell(Q)^k$.  Here, $X^p(Q)/\mathcal{P}_{k-1}$ is equipped with (cf. \eqref{localerror})
$$
	\|f\|_{X(Q)/\mathcal{P}_{k-1}} := E_{k}(f, Q)_X.
$$
On the other hand, we obviously have
\begin{equation}\label{ThmSharpPPPhike1}
\left\|T f\right\|_{X(Q)/\mathcal{P}_{k-1}}
\le \left\|f\right\|_{X(Q)}.
\end{equation}

Applying
Theorem \ref{ThmIntPoincare} with $A_0=X(Q)$,
$A_1=\dot{W}^{k} X(Q)$, and $B=X(Q)/\mathcal{P}_{k-1}$ (and so $\|T\|_0 = 1, \, \|T\|_1 = C \ell(Q)^k$)
\begin{equation}\label{411}
E_{k}(f, Q)_X^p
=\left\|Tf\right\|_{X(Q)/\mathcal{P}_{k-1}}^p \lesssim \ell(Q)^{k p}  \int_{0}^{\ell(Q)^k}
\left(\frac{K(t,f;X(Q),\dot{W}^{k} X (Q))}{t}
\right)^p\rho_{\varepsilon}(t)\,dt.
\end{equation}
Furthermore, using the estimate for the $K$-functional given in Lemma \ref{lemmaAk} (adequately adapted to cubes) and Fubini's theorem, we derive
\begin{align}\label{4.10}
	 \ell(Q)^{k p}  \int_{0}^{\ell(Q)^k}
\left(\frac{K(t,f;X(Q),\dot{W}^{k} X (Q))}{t}
\right)^p\rho_{\varepsilon}(t)\,dt &\\
&\hspace{-5cm} \approx\ell(Q)^{k p}  \int_{0}^{\ell(Q)^k}\frac{1}{t^{\frac{N}{k}+p}}\int_{|h| \le t^{\frac{1}{k}}} \|\Delta^k_hf \|_{X(Q(k, h))}^p \,dh \, \rho_{\varepsilon}(t) \,dt \nonumber \\
& \hspace{-5cm}  \approx \ell(Q)^{kp} \,  \int_{|h| \leq \ell(Q)}  \|\Delta^k_hf \|_{X(Q(k, h))}^p \, \phi_{k, \varepsilon} (|h|)  \, dh. \nonumber
\end{align}
Combining \eqref{411} and \eqref{4.10}, we achieve the desired result. \qed

\subsection{Special cases of Theorem \ref{ThmSharpPPPhik} and comparison with previous  results.}\label{SectionCompare} Next we apply Theorem \ref{ThmSharpPPPhik} for the special choices of $\rho_\varepsilon$ given by Examples \ref{Ex1} (cf. Appendix \ref{SectionAA}) and compare the outcomes with the corresponding Ponce's inequalities obtained in \cite[Section 2]{Ponce}. We stress that all  equivalence constants from this section are independent of $f,$ the cube $Q$, and the smoothness parameter $\varepsilon$ (or $s$).

\begin{cor}
Let $X$ be a r.i. space on $\R^N$, $p\in(0,\infty)$,
and $k\in\mathbb{N}$. Then, for any $f\in X$ and
$s\in[\frac{1}{2},k)$,
\begin{equation}\label{SharpPPX}
E_{k}(f,Q)_X^p
\lesssim (k-s)\ell(Q)^{sp}
\int_{|h| \le \ell(Q)}
\frac{\|\Delta^k_hf\|_{X(Q(k,h))}^p}
{|h|^{sp+N}}\,dh.
\end{equation}
In particular, if $X = L^p(\R^N), \, p \in [1, \infty)$, then
	\begin{equation}\label{SharpPP}
	 	E_{k}(f, Q)_p^p	 \lesssim (k-s) \, \ell(Q)^{s p} \, \int_Q \int_{\widetilde{Q}(k, x)} \frac{|\Delta^k_h f (x)|^p}{|h|^{s p + N}}  \, dh \, dx,
	\end{equation}
	which coincides with the classical BBM estimate \eqref{Intro110} if $k=1$.
\end{cor}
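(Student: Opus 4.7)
The plan is to obtain \eqref{SharpPPX} as a direct specialization of Theorem~\ref{ThmSharpPPPhik} with a one-parameter family of power-type weights $\rho_\varepsilon$ on $(0,\ell(Q)^k)$, calibrated so that the induced weight $\phi_\varepsilon$ from \eqref{gsfg} collapses to the pure power $|h|^{-sp-N}$ that appears on the right-hand side of \eqref{SharpPPX}. Concretely, with $s$ playing the role of the smoothness parameter (the identification $\varepsilon \leftrightarrow p(k-s)/k$ corresponds to the standard power-type family from Examples~\ref{Ex1}), one takes
$$
\rho_\varepsilon(u) := \frac{p(k-s)}{k\,\ell(Q)^{p(k-s)}}\, u^{p(k-s)/k - 1}\, \mathbf{1}_{(0,\ell(Q)^k)}(u).
$$
This is non-negative, is supported in $(0,\ell(Q)^k)$, and a direct integration verifies the normalization \eqref{AssRho1}, so Theorem~\ref{ThmSharpPPPhik} applies.

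The next step is to compute $\phi_\varepsilon$ from \eqref{gsfg} in closed form. After the substitution $u\mapsto u^k$ inside the defining integral, the resulting power integral evaluates to
$$
\phi_\varepsilon(t) = \frac{p(k-s)}{k(sp+N)\,\ell(Q)^{p(k-s)}}\Big[t^{-sp-N} - \ell(Q)^{-sp-N}\Big] \le \frac{p(k-s)}{k(sp+N)\,\ell(Q)^{p(k-s)}}\, t^{-sp-N}
$$
for all $t \in (0,\ell(Q))$. Feeding this bound into \eqref{SharpPPk} and using $\ell(Q)^{kp}/\ell(Q)^{p(k-s)} = \ell(Q)^{sp}$, one immediately obtains
$$
E_{k}(f,Q)_X^p \lesssim \frac{p(k-s)}{k(sp+N)}\, \ell(Q)^{sp} \int_{|h|\le \ell(Q)} \frac{\|\Delta_h^k f\|_{X(Q(k,h))}^p}{|h|^{sp+N}}\, dh,
$$
which is precisely \eqref{SharpPPX} once we observe that the prefactor $p/(k(sp+N))$ is uniformly bounded in $s \in [\tfrac12, k)$ by a constant depending only on $k$, $p$, $N$ (this is where the lower restriction $s \ge \tfrac12$ is used, to keep $sp+N$ bounded away from zero uniformly in $s$).

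For the $L^p$ assertion \eqref{SharpPP}, the point is to unfold the norm as $\|\Delta_h^k f\|_{L^p(Q(k,h))}^p = \int_{Q(k,h)} |\Delta_h^k f(x)|^p\,dx$ and apply Fubini's theorem to swap the $x$ and $h$ integrations, exactly as in Remark~\ref{Remark1.28}; this converts the integration region $\{|h|\le \ell(Q),\, x\in Q(k,h)\}$ into $\{x\in Q,\, h\in \widetilde{Q}(k,x)\}$. Specializing to $k=1$ and noting $\widetilde{Q}(1,x) \subset Q - x$, the resulting double integral recovers \eqref{Intro110}, with the $\ell(Q)^{sp}$ factor accounting for the standard scaling of the BBM estimate.

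There is no serious obstacle in this argument: once the exponent $\alpha := p(k-s)/k - 1$ in $\rho_\varepsilon(u) = c_\varepsilon u^\alpha \mathbf{1}_{(0,\ell(Q)^k)}$ is correctly identified (so that the integral defining $\phi_\varepsilon$ produces exactly $|h|^{-sp-N}$), everything else is a routine evaluation. The only technical care required is bookkeeping the constant $c_\varepsilon$ forced by the normalization $\int \rho_\varepsilon = 1$ and checking that, after dividing by $\ell(Q)^{p(k-s)}$ and multiplying by $\ell(Q)^{kp}$, the prefactor of the right-hand side carries exactly the expected $\ell(Q)^{sp}$ together with the decay factor $(k-s)$.
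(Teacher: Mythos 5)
Your proposal is correct and follows essentially the same route as the paper: choose the power-type family $\rho_\varepsilon(u)=\varepsilon\,\ell(Q)^{-k\varepsilon}u^{\varepsilon-1}\mathbf 1_{(0,\ell(Q)^k)}(u)$ from Examples~\ref{Ex1}(i), substitute into Theorem~\ref{ThmSharpPPPhik}, compute (or bound) $\phi_\varepsilon$, and make the identification $\varepsilon\leftrightarrow p(k-s)/k$; you are a bit more explicit than the paper in writing out $\phi_\varepsilon$ in closed form rather than just the upper bound, and the rest (the Fubini step for $X=L^p$ and the $k=1$ specialization) matches Remark~\ref{Remark1.28}. One small inaccuracy in your commentary: the restriction $s\ge\tfrac12$ is not needed to keep $sp+N$ away from zero, since $sp+N\ge N\ge 1$ holds for any $s>0$; the hypothesis simply pins $\varepsilon$ to a compact range so that all implicit constants are manifestly uniform, but the bound you derived is already uniform on all of $s\in(0,k)$.
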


\begin{proof}
Letting $\rho_{\varepsilon}(t)=\varepsilon\ell(Q)^{-k\varepsilon}
t^{\varepsilon-1}{\bf 1}_{(0,\ell(Q)^{k})}(t)$ (cf. Example \ref{Ex1}(i)) in Theorem \ref{ThmSharpPPPhik}, we have $\phi_\varepsilon(t) \lesssim \varepsilon \ell(Q)^{-\varepsilon k} t^{-k p - N + \varepsilon k}$ if $0 < \varepsilon < \varepsilon_0$ and thus \eqref{SharpPPX} follows from \eqref{SharpPPk} (using the change of variables $\varepsilon \leftrightarrow \frac{(k-s)p}{k}$).
\end{proof}

\begin{remark}
In fact, in the above proof one can make use of the equivalence
$$\phi_\varepsilon(t) \approx \varepsilon \ell(Q)^{-\varepsilon k} \left(t^{-k p - N + \varepsilon k}-\ell(Q)^{-k p - N + \varepsilon k}\right).$$
However, the additional negative term does not allow  to sharpen \eqref{SharpPPX}
 because the corresponding integral
   satisfies\footnote{Use that $\Delta^k_h$ annihilates polynomials in $\mathcal{P}_{k-1}$.}
	$$
		\varepsilon \ell(Q)^{-N} \int_{|h| \leq \ell(Q)} \|\Delta^k_h f\|^p_{X(Q(k, h))} \, dh \lesssim \varepsilon  E_k(f, Q)_X^p.
	$$
	
\end{remark}

		As a consequence of Theorem \ref{ThmSharpPPPhik} for 
  $\rho_\varepsilon(t) = \frac{\alpha  t^{\alpha -1}}{\varepsilon^\alpha \ell(Q)^{k \alpha}}  \, \mathbf{1}_{(0, \varepsilon \ell(Q)^k)}(t)$ (cf. Example \ref{Ex1}(ii) with \eqref{27new}, appropriately adapted to the cube $Q$), we establish the following family of estimates.

\begin{cor}\label{CorPlog}
Let $X$ be a r.i. space on $\R^N$, $p, \alpha \in(0,\infty)$,
and $k\in\mathbb{N}$.
\begin{enumerate}[\upshape(i)]
\item Let  $\alpha > p + \frac{N}{k}$.
Then, for any $f\in X$
and $\varepsilon\in(0,1)$,
\begin{equation*}
E_k(f,Q)_X^p \lesssim \varepsilon^{-p-\frac{N}{k}} \ell(Q)^{-k (p+ \frac{N}{k})}
\int_{|h| <\varepsilon^{1/k}\ell(Q)}
\left\|\Delta^k_hf\right\|_{X(Q(k, h))}^p \bigg[ 1 - \bigg(\frac{\varepsilon^{1/k} \ell(Q)}{|h|} \bigg)^{k (p-  \alpha + \frac{N}{k}) } \bigg]     \,dh.
\end{equation*}
\item  Let  $\alpha = p + \frac{N}{k}$.
Then, for any $f\in X$
and $\varepsilon\in(0,1)$,
\begin{equation*}
	E_k(f,Q)_X^p \lesssim \varepsilon^{-\alpha} \ell(Q)^{-k \alpha} \int_{|h| < \varepsilon^{1/k} \ell(Q)} \log \bigg(\frac{\varepsilon^{1/k} \ell(Q)}{|h|} \bigg) \left\|\Delta^k_h f\right\|_{X(Q(k, h))}^p \, dh.
\end{equation*}
\item Let  $\alpha < p + \frac{N}{k}$.
Then, for any $f\in X$
and $\varepsilon\in(0,1)$,
\begin{equation*}
E_k(f,Q)_X^p\lesssim \varepsilon^{-p-\frac{N}{k}} \ell(Q)^{-k (p+ \frac{N}{k})}
\int_{|h| <\varepsilon^{1/k}\ell(Q)}
\left\|\Delta^k_hf\right\|_{X(Q(k, h))}^p  \bigg[ \bigg(\frac{\varepsilon^{1/k} \ell(Q)}{|h|} \bigg)^{k(p-\alpha + \frac{N}{k})}- 1 \bigg]  \,dh.
\end{equation*}
\end{enumerate}
\end{cor}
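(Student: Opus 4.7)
The strategy is to substitute the explicit family $\rho_\varepsilon(t) = \frac{\alpha\, t^{\alpha-1}}{\varepsilon^\alpha \ell(Q)^{k\alpha}}\mathbf{1}_{(0, \varepsilon\ell(Q)^k)}(t)$ directly into Theorem \ref{ThmSharpPPPhik} and compute the one-dimensional integral defining $\phi_\varepsilon$ in closed form. First I would verify the hypotheses: nonnegativity is immediate, the normalization $\int_0^\infty \rho_\varepsilon(t)\,dt = 1$ is a one-line calculation, and the support $(0, \varepsilon\ell(Q)^k)$ lies inside $(0, \ell(Q)^k)$ whenever $\varepsilon \in (0,1)$, as required by \eqref{AssRho1} and the support condition in Theorem \ref{ThmSharpPPPhik}.

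Substituting $\rho_\varepsilon(u^k) = \alpha u^{k(\alpha-1)}/(\varepsilon^\alpha \ell(Q)^{k\alpha})$ on $(0,\varepsilon^{1/k}\ell(Q))$ into \eqref{gsfg}, the integrand in $\phi_\varepsilon$ collapses to a constant times $u^{k\alpha - kp - N - 1}$, so that for $0 < t < \varepsilon^{1/k}\ell(Q)$
\begin{equation*}
\phi_\varepsilon(t) = \frac{\alpha}{\varepsilon^\alpha \ell(Q)^{k\alpha}}\int_t^{\varepsilon^{1/k}\ell(Q)} u^{k\alpha - kp - N - 1}\,du,
\end{equation*}
while $\phi_\varepsilon(t) = 0$ for $t \geq \varepsilon^{1/k}\ell(Q)$. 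This already explains why the common region of integration in (i)--(iii) is $\{|h| < \varepsilon^{1/k}\ell(Q)\}$.

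The three cases then correspond exactly to the sign of the exponent $\beta := k\alpha - kp - N = -k(p - \alpha + N/k)$. In case (i), $\alpha > p + N/k$ gives $\beta > 0$ so the antiderivative is a power; pulling out $(\varepsilon^{1/k}\ell(Q))^\beta$ produces the bracket $[1 - (t/(\varepsilon^{1/k}\ell(Q)))^\beta]$, and the sign identity $\beta = -k(p - \alpha + N/k)$ rewrites it as the bracket in (i). Case (iii) with $\alpha < p + N/k$ is symmetric: only the sign of $\beta$ flips, inverting the bracket to $[(\varepsilon^{1/k}\ell(Q)/|h|)^{k(p-\alpha+N/k)} - 1]$. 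In the critical case (ii), $\alpha = p + N/k$, the exponent $\beta$ is zero and the integrand reduces to $u^{-1}$, producing the logarithmic weight $\log(\varepsilon^{1/k}\ell(Q)/|h|)$.

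The last step is to insert the resulting $\phi_\varepsilon$ into \eqref{SharpPPk} and collect the powers of $\varepsilon$ and $\ell(Q)$ into the prefactor to recover the three stated inequalities. The argument is essentially substitution plus power bookkeeping; the only subtle point worth flagging is the logarithmic borderline case (ii), which cannot be obtained as a limit of the two power cases within a single unified formula and must be treated separately.
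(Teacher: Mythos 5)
Your proposal matches the paper's proof exactly: the paper's entire argument for this corollary is the one-line observation that one should substitute $\rho_\varepsilon(t) = \frac{\alpha t^{\alpha-1}}{\varepsilon^\alpha \ell(Q)^{k\alpha}}\mathbf{1}_{(0,\varepsilon\ell(Q)^k)}(t)$ into Theorem~\ref{ThmSharpPPPhik}, and your verification of the hypotheses, computation of $\phi_\varepsilon$ from \eqref{gsfg}, and sign discussion of $\beta = k\alpha - kp - N$ are all the right steps spelled out. One caveat on the "power bookkeeping" you deferred: carrying out the final step carefully, one finds $\ell(Q)^{kp}\phi_\varepsilon(t) \approx \tfrac{\alpha}{|\beta|}\,\varepsilon^{-p-N/k}\,\ell(Q)^{-N}\bigl[\cdots\bigr]$, so the $\ell(Q)$-prefactor should be $\ell(Q)^{-N}$ (which is also what dimensional scaling demands) rather than $\ell(Q)^{-k(p+N/k)}$ as printed in the corollary; the two coincide only on the unit cube, which is the case actually used in the subsequent comparison with Ponce's result.
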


\begin{remark}
	Working with $X = L^p(\R^N), \, p \in [1, \infty)$, and $k=1$, Ponce \cite[Corollary 2.4]{Ponce}  obtained, for the unit cube $Q$,
	\begin{equation}\label{PR1}
		\int_Q |f-f_Q|^p \lesssim \varepsilon^{-p-N} \underset{|x-y| \leq \varepsilon}{\int_Q \int_Q}  |f(x)-f(y)|^p \, dx \, dy.
	\end{equation}
	According to Corollary \ref{CorPlog}(i), the right-hand side of \eqref{PR1} can be reinforced to say that
	\begin{equation}\label{431new}
	\int_Q |f-f_Q|^p \lesssim  \varepsilon^{-p-N} \underset{|x-y| \leq \varepsilon}{\int_Q \int_Q}  |f(x)-f(y)|^p  \bigg[1 - \bigg(\frac{\varepsilon}{|x-y|} \bigg)^{p -\alpha + N} \bigg] \, dx \, dy
	\end{equation}
	for any $\alpha > p + N$. Here, the hidden equivalence constant is independent of $\varepsilon$, but it clearly depends on $\alpha$ (otherwise, let
   $\alpha \to (p+N)^{+}$). Note that the additional term $1 - (\frac{\varepsilon}{|x-y|})^{p -\alpha + N}$ in \eqref{431new} is monotone with respect to $\alpha$.

	On the other hand, if $\alpha < p+N$ then an application of Corollary \ref{CorPlog}(iii) gives
	\begin{equation*}
	\int_Q |f-f_Q|^p \lesssim \varepsilon^{-p-N} \, \underset{|x-y| \leq \varepsilon}{\int_Q \int_Q} |f(x)-f(y)|^p  \bigg[ \bigg(\frac{   \varepsilon }{|x-y|} \bigg)^{p-\alpha+N} -1\bigg] \, dx \, dy.
	\end{equation*}
\end{remark}

\begin{cor}
Let $X$ be a r.i. space on $\R^N$, $p\in(0,\infty)$,
and $k\in\mathbb{N}$. Then, for any $f\in X$ and
$\varepsilon\in(0,1)$,
	\begin{align}\nonumber
E_k(f,Q)_X^p
&\lesssim \frac{\ell(Q)^{k p}}{|\log \varepsilon|}
\int_{\ell(Q)\varepsilon^{1/k}<|h| \le \ell(Q)}
\frac{\|\Delta^k_hf\|^p_{X(Q(k,h))}}{|h|^{kp+N}}\,dh \nonumber \\
&\quad+ \frac{\ell(Q)^{-N}}{|\log \varepsilon|}\left[ \frac{1}{\varepsilon^{p+\frac{N}{k}}}
\int_{|h|\le\ell(Q)\varepsilon^{1/k}}\left\|\Delta^k_hf\right\|^p_{X(Q(k,h))}\,dh -
\int_{|h|\le\ell(Q)}\left\|\Delta^k_hf\right\|^p_{X(Q(k,h))}\,dh\right]. \label{PonImpLog}
	\end{align}
\end{cor}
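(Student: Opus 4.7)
The plan is to apply Theorem \ref{ThmSharpPPPhik} with the logarithmic choice
\[
\rho_\varepsilon(t) = \frac{1}{|\log \varepsilon|}\, \frac{1}{t}\, \mathbf{1}_{(\varepsilon\ell(Q)^k,\,\ell(Q)^k)}(t),
\]
which has support in $(0,\ell(Q)^k)$ and satisfies \eqref{AssRho1} since
\[
\int_0^\infty \rho_\varepsilon(t)\,dt = \frac{1}{|\log\varepsilon|}\int_{\varepsilon\ell(Q)^k}^{\ell(Q)^k}\frac{dt}{t} = 1.
\]

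First, I would compute the associated weight $\phi_\varepsilon$ from \eqref{gsfg} explicitly. The change of variables $u^k \in (\varepsilon\ell(Q)^k,\ell(Q)^k)$ translates into $u \in (\varepsilon^{1/k}\ell(Q),\ell(Q))$, so
\[
\phi_\varepsilon(t) = \frac{1}{|\log\varepsilon|}\int_{\max\{t,\,\varepsilon^{1/k}\ell(Q)\}}^{\ell(Q)} u^{-kp-N-1}\,du,
\]
and the elementary antiderivative yields the two-piece formula
\[
\phi_\varepsilon(t) = \begin{cases}
\dfrac{1}{(kp+N)|\log\varepsilon|}\left[\dfrac{1}{t^{kp+N}} - \dfrac{1}{\ell(Q)^{kp+N}}\right], & \varepsilon^{1/k}\ell(Q) \le t < \ell(Q), \\[4pt]
\dfrac{1}{(kp+N)|\log\varepsilon|\,\ell(Q)^{kp+N}}\left[\dfrac{1}{\varepsilon^{p+N/k}} - 1\right], & 0 < t \le \varepsilon^{1/k}\ell(Q).
\end{cases}
\]

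Second, I would plug this into \eqref{SharpPPk} and split the $h$-integral at $|h| = \varepsilon^{1/k}\ell(Q)$. On the outer annulus, the leading $|h|^{-kp-N}$ piece multiplied by $\ell(Q)^{kp}$ produces, up to the harmless constant $(kp+N)^{-1}$, the first term of \eqref{PonImpLog}, while the subtracted $\ell(Q)^{-kp-N}$ piece contributes
\[
-\frac{\ell(Q)^{-N}}{(kp+N)|\log\varepsilon|}\int_{\varepsilon^{1/k}\ell(Q) < |h| \le \ell(Q)} \left\|\Delta^k_hf\right\|^p_{X(Q(k,h))}\,dh.
\]
On the inner ball, the constant value of $\phi_\varepsilon$ together with the factor $\ell(Q)^{kp}$ gives
\[
\frac{\ell(Q)^{-N}}{(kp+N)|\log\varepsilon|}\bigl[\varepsilon^{-p-N/k}-1\bigr]\int_{|h| \le \varepsilon^{1/k}\ell(Q)} \left\|\Delta^k_hf\right\|^p_{X(Q(k,h))}\,dh.
\]

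Third, I would merge the two negative contributions using $\int_{|h|\le \varepsilon^{1/k}\ell(Q)} + \int_{\varepsilon^{1/k}\ell(Q)<|h|\le\ell(Q)} = \int_{|h|\le \ell(Q)}$, which collapses the bookkeeping exactly into the second bracket of \eqref{PonImpLog}. The only non-cosmetic step is this sign-tracking combination; no real obstacle arises because Theorem \ref{ThmSharpPPPhik} does the heavy lifting and $\rho_\varepsilon$ is engineered so that the integral $\int u^{-kp-N-1}\,du$ is elementary, with the $|\log\varepsilon|$ factor arising solely from the normalization of $\rho_\varepsilon$.
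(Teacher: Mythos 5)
Your proof is correct and takes essentially the same approach as the paper: you choose the identical $\rho_\varepsilon(t)=\tfrac{1}{|\log\varepsilon|}t^{-1}\mathbf{1}_{(\varepsilon\ell(Q)^k,\ell(Q)^k)}(t)$, arrive at the same two-piece formula for $\phi_\varepsilon$, split the $h$-integral at $|h|=\varepsilon^{1/k}\ell(Q)$, and recombine the two negative contributions into $\int_{|h|\le\ell(Q)}$. The paper records this as display \eqref{PhiSpecialLog} and equivalence \eqref{423new}; your bookkeeping of the constant $(kp+N)^{-1}$ and the sign-tracking matches exactly.
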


\begin{proof}
	We will apply Theorem \ref{ThmSharpPPPhik} with
$\rho_\varepsilon(t)=\frac{1}{|\log \varepsilon|}
t^{-1}{\bf 1}_{(\ell(Q)^k\varepsilon,\ell(Q)^k)}(t)$ (cf. Example \ref{Ex1}(iii)).
In this case,
the function $\phi_\varepsilon$ is given by
\begin{equation}\label{PhiSpecialLog}
	\phi_\varepsilon(t) = \left\{\begin{array}{cl}  \frac{\ell(Q)^{-k p - N}}{|\log \varepsilon| (k p + N)} \, (\varepsilon^{-p - \frac{N}{k}} -1), & \text{if} \quad t \in (0, \ell(Q) \varepsilon^{\frac{1}{k}}), \\
		\frac{1}{|\log \varepsilon| (k p + N)} \, (t^{-k p - N} - \ell(Q)^{-k p - N}),  & \text{if} \quad t \in (\ell(Q) \varepsilon^{\frac{1}{k}}, \ell(Q)).
		       \end{array}
                        \right.
\end{equation}
Thus direct computations lead to
\begin{align}
\int_{|h| \leq \ell(Q)}  \left\|\Delta^k_h f \right\|_{X(Q(k, h))}^p
\phi_{\varepsilon} (|h|) \,dh & \approx \frac{1}{|\log \varepsilon|}
\int_{\ell(Q)\varepsilon^{1/k}<|h| \le \ell(Q)}
\frac{\|\Delta^k_hf\|^p_{X(Q(k,h))}}{|h|^{kp+N}}\,dh \nonumber \\
&\hspace{-4.5cm}\quad+ \frac{\ell(Q)^{-kp-N}}{|\log \varepsilon|}\left[ \frac{1}{\varepsilon^{p+\frac{N}{k}}}
\int_{|h| \le\ell(Q)\varepsilon^{1/k}}\left\|\Delta^k_hf\right\|^p_{X(Q(k,h))}\,dh -
\int_{|h| \le\ell(Q)}\left\|\Delta^k_hf\right\|^p_{X(Q(k,h))}\,dh\right], \label{423new}
\end{align}
and the desired estimate \eqref{PonImpLog} follows from \eqref{SharpPPk}.
\end{proof}

\begin{remark}
	Assume, for simplicity, that $Q$ is the unit cube. Specializing \eqref{PonImpLog} with $X = L^p (\R^N), \, p \in [1, \infty),$ and $k=1$, we derive
		\begin{align}
\int_Q |f-f_Q|^p
&\lesssim \frac{1}{|\log \varepsilon|} \, \underset{|x-y| > \varepsilon}{\int_Q \int_Q}
\frac{|f(x)-f(y)|^p}{|x-y|^{p+N}}\,dx \, dy \nonumber \\
&\quad+ \frac{1}{|\log \varepsilon|}\left[ \frac{1}{\varepsilon^{p+N}} \underset{|x-y| \leq \varepsilon }{\int_Q \int_Q}  |f(x)-f(y)|^p \,dx \, dy -
\int_Q \int_Q |f(x)-f(y)|^p \,dx \, dy\right]. \label{425new}
	\end{align}
	{Here we take into account  two special cases of \eqref{BBM} (cf. \cite[formulas (46) and (48)]{Brezis} for further details):
	$$
		\lim_{\varepsilon \to 0^+} \frac{1}{\varepsilon^{p+N}} \underset{|x-y| \leq \varepsilon }{\int_Q \int_Q}  |f(x)-f(y)|^p \,dx \, dy = \frac{C_{N, p}}{N+p} \, \|\nabla f\|^p_{L^p(Q)}
	$$
	and
		\begin{equation*}
		\lim_{\varepsilon \to 0^+} \frac{1}{|\log \varepsilon|} \, \underset{|x-y| > \varepsilon}{\int_Q \int_Q} \frac{|f(x)-f(y)|^p}{|x-y|^{p+N}}\,dx \, dy = C_{N, p} \, \|\nabla f\|^p_{L^p(Q)};
	\end{equation*}
	see also Corollaries \ref{CorMSX2} and  \ref{Coro633} below for more general versions of these formulas. In particular,  \eqref{425new} complements the following inequality due to Ponce \cite[Corollary 2.3]{Ponce}:
	$$
		\int_Q |f-f_Q|^p \lesssim \frac{1}{|\log \varepsilon|} \, \underset{|x-y| > \varepsilon}{\int_Q \int_Q}
\frac{|f(x)-f(y)|^p}{|x-y|^{p+N}}\,dx \, dy.
	$$
	}

\end{remark}

\section{Quantitative Gaussian Sobolev inequalities}

\subsection{Proof of Theorem \ref{ThmGSL}}\label{ProofG}

Let $W_\mathcal{L} L^p(\R^N, \gamma_N)$ be the $L^p$-Sobolev space associated with $\mathcal{L}$, i.e.,
$$
	\|f\|_{W_\mathcal{L} L^p(\R^N, \gamma_N)} := \|\mathcal{L} f\|_{L^p(\R^N, \gamma_N)}.
$$
Accordingly,  inequality \eqref{GaussSobolev} can be viewed in terms of boundedness properties of the linear operator $T f = f-m(f)$, specifically,
\begin{equation}\label{GS1Proof}
	T: W_\mathcal{L} L^p(\R^N, \gamma_N) \rightarrow L^p (\log L)^p (\R^N, \gamma_N).
\end{equation}
On the other hand, clearly, there holds 
\begin{equation}\label{GSProof2}
	T: L^p(\R^N, \gamma_N) \rightarrow L^p(\R^N, \gamma_N).
\end{equation}
Note that the norm of $T$ relative to both \eqref{GS1Proof} and \eqref{GSProof2} is independent of $N$.
It follows from  \eqref{GS1Proof} and \eqref{GSProof2} that, with a constant independent of $N$,
\begin{equation*}
	K(t, Tf; L^p(\R^N, \gamma_N), L^p (\log L)^p (\R^N, \gamma_N)) \lesssim K(t, f; L^p(\R^N, \gamma_N), W_\mathcal{L} L^p(\R^N, \gamma_N) )
\end{equation*}
and thus a simple change of variables leads to
\begin{align}
	\int_0^{1} \bigg(\frac{K((1-\log t)^{-1}, T f; L^p(\R^N, \gamma_N), L^p (\log L)^p (\R^N, \gamma_N)) }{(1-\log t)^{-1}} \bigg)^p \rho_\varepsilon( (1-\log t)^{-1}) \, \frac{dt}{t (1-\log t)^2} & \nonumber\\
	&\hspace{-11cm} = \int_0^{1} \bigg(\frac{K(t, T f; L^p(\R^N, \gamma_N), L^p (\log L)^p (\R^N, \gamma_N)) }{t} \bigg)^p \, \rho_\varepsilon(t) \, dt\nonumber \\
	& \hspace{-11cm} \lesssim  \int_0^1 \bigg(\frac{K(t , f; L^p(\R^N, \gamma_N), W_\mathcal{L} L^p(\R^N, \gamma_N) ) }{t} \bigg)^p  \, \rho_\varepsilon(t) \, dt. \label{65G}
\end{align}

Recall the well-known fact that $W_\mathcal{L} L^p(\R^N, \gamma)$ is the domain space related to the infinitesimal  generator of the analytic semigroup $\{G_t\}_{t > 0}$  on $L^p(\R^N, \gamma_N)$. In particular, the Ditzian--Ivanov formula for $K$-functionals (see \cite[Theorem 5.1]{DitzianIvanov}) can be applied to get
\begin{equation}\label{DIForm}
	K(t, f; L^p(\R^N, \gamma_N), W_\mathcal{L} L^p(\R^N, \gamma_N) ) \approx   \|f-G_t f \|_{L^p(\R^N, \gamma_N)}.
\end{equation}
More precisely, an inspection of the proof of the estimate $\lesssim$ in \eqref{DIForm} given in \cite{DitzianIvanov} shows that the corresponding constant only depends on the following quantities
$$
	C_1 = \sup_{t > 0} \|G_t\|_{L^p(\R^N, \gamma_N) \to L^p(\R^N, \gamma_N)} \qquad \text{and} \qquad C_2 =\sup_{t > 0} t \, \|\mathcal{L} G_t\|_{L^p(\R^N, \gamma_N) \to L^p(\R^N, \gamma_N)}.
$$
Note that both $C_1$ and $C_2$ are independent of $N$. More precisely, it is well known that $C_1 \leq 1$ and, on the other hand,  $C_2 \leq C(p) C(\frac{p}{p-1})$, where $C(p) = \big((2 \pi)^{-1/2} \int_\R |s|^p e^{-s^2/2} \, ds \big)^{1/p}$; we refer to  \cite[p. 687, Proposition 4.1]{Kosov} for further details.

It follows from \eqref{DIForm} and  Fubini's theorem that
\begin{equation}\label{GSProof4}
	\int_0^1 \bigg(\frac{K(t , f; L^p(\R^N, \gamma_N), W_\mathcal{L} L^p(\R^N, \gamma_N) ) }{t} \bigg)^p  \, \rho_\varepsilon(t) \, dt \lesssim   \int_{\R^N} \int_0^1 \frac{|f(x)-G_t f(x)|^p}{t^p}
 \, \rho_\varepsilon(t) \, dt \, d\gamma_N(x).
\end{equation}

Next we turn our attention to the $K$-functional related to $(L^p, L^p (\log L)^p)$ on $(\R^N, \gamma_N)$. At this stage, additional obstacles arise since $L^p (\log L)^p$ can not be described as a classical interpolation space of $L^p$. However, we are able to overcome such obstacles by applying the modern theory of \emph{limiting interpolation}, cf. \cite{DominguezTikhonov}  and the references therein. Indeed,  we have that $L^p (\log L)^p$ can be characterized as a limiting interpolation space of $L^p$, i.e.,
\begin{equation}\label{GSProof5}
	L^p (\log L)^p (\R^N, \gamma_N) = (L^p(\R^N, \gamma_N), L^\infty(\R^N, \gamma_N))_{0, p, \frac{1}{p'}},
\end{equation}
where
\begin{equation}\label{LimIntNewForm}
	\|f\|^p_{(L^p(\R^N, \gamma_N), L^\infty(\R^N, \gamma_N))_{0, p, \frac{1}{p'}}} := \int_0^1 [(1-\log t)^{1/p'} K(t, f; L^p(\R^N, \gamma_N), L^\infty(\R^N, \gamma_N))]^p \, \frac{dt}{t}.
\end{equation}
The  introduction of meaningful limiting interpolation spaces requires further modifications than just letting  $\theta = 0, 1$ in the classical norm  $\|\cdot\|_{(A_0, A_1)_{\theta, p}}$ (cf. \eqref{CRM}). Indeed, in the ordered case $A_1 \subset A_0$,   only the behavior of the $K$-functional near the origin plays a role (i.e., the integral in \eqref{LimIntNewForm} is defined on $(0, 1)$ rather than on $(0, \infty)$) and additional logarithmic weights are needed.
In fact, \eqref{GSProof5} easily follows from
$$
	K(t, f; L^p(\R^N, \gamma_N), L^\infty(\R^N, \gamma_N)) \approx \bigg(\int_0^{t^p} (f^{*}_{\gamma_N} (\lambda))^p \, d\lambda\bigg)^{1/p}
$$
with constants independent of $N$, see \cite[Theorem 5.2.1, p. 109]{BerghLofstrom}. Therefore one can make use of known Holmstedt's formulas for limiting interpolation (cf. \cite[Lemma 2.1]{DominguezTikhonov}) in combination with \eqref{GSProof5}, to establish, for any $t \in (0, 1)$,
\begin{align*}
	&K( (1-\log t)^{-1}, f; L^p(\R^N, \gamma_N), L^p (\log L)^p (\R^N, \gamma_N)) \\
& \quad\approx  K( (1-\log t)^{-1}, f; L^p(\R^N, \gamma_N), (L^p(\R^N, \gamma_N), L^\infty(\R^N, \gamma_N))_{0, p, \frac{1}{p'}}) \\
	& \quad \approx (1-\log t)^{-1} \, \bigg(\int_t^1 (1-\log u)^{p/p'} K(u, f; L^p(\R^N, \gamma_N), L^\infty(\R^N, \gamma_N))^p \, \frac{du}{u} \bigg)^{1/p} + \|f\|_{L^p(\R^N, \gamma_N)}   \\
	& \quad \approx (1-\log t)^{-1} \, \bigg((1-\log t)^p \, \int_0^{t} (f^*_{\gamma_N}(\lambda))^p \, d \lambda  + \int_{t}^1 (f^*_{\gamma_N} (\lambda))^p (1-\log \lambda)^p \, d \lambda  \bigg)^{1/p} + \|f\|_{L^p(\R^N, \gamma_N)}.
\end{align*}
Furthermore,  all  equivalence constants in the above formulas are independent of $N$. Hence
\begin{align}
	\int_0^{1} \bigg(\frac{K((1-\log t)^{-1}, T f; L^p(\R^N, \gamma_N), L^p (\log L)^p (\R^N, \gamma_N)) }{(1-\log t)^{-1}} \bigg)^p \rho_\varepsilon( (1-\log t)^{-1}) \, \frac{dt}{t (1-\log t)^2} & \nonumber\\
	&\hspace{-11cm} \gtrsim \int_0^{1} (1-\log t)^p \, \int_0^{t} ((Tf)^*_{\gamma_N}(\lambda))^p \, d \lambda \,  \rho_\varepsilon( (1-\log t)^{-1}) \, \frac{dt}{t (1-\log t)^2}  \nonumber  \\
& \hspace{-10cm}  + \int_0^{1} \int_{t}^{1} ((T f)^*_{\gamma_N} (\lambda))^p (1-\log \lambda)^p \, d \lambda \,  \rho_\varepsilon( (1-\log t)^{-1}) \, \frac{dt}{t (1-\log t)^2} \nonumber\\
& \hspace{-11cm}  \approx \int_0^{1} ((T f)^*_{\gamma_N}(t))^p \, \Upsilon_\varepsilon (t) \, dt, \label{68G}
\end{align}
where the last estimate follows from Fubini's theorem and \eqref{UpsilonDef}.

Putting together \eqref{65G}, \eqref{GSProof4} and \eqref{68G}, we arrive at
$$
\int_0^{1} ((T f)^*_{\gamma_N}(t))^p \, \Upsilon_\varepsilon (t) \, dt \lesssim    \int_{\R^N} \int_0^1 \frac{|f(x)-G_t f(x)|^p}{t^p}
 \, \rho_\varepsilon(t) \, dt \, d\gamma_N(x).
$$
\qed

\subsection{Theorem \ref{ThmGSL} sharpens classical Gaussian Sobolev inequalities}\label{Section42}

We claim that the Gaussian Sobolev inequality \eqref{GaussSobolev} follows from \eqref{ThmGSL1eps} as $\varepsilon \to 0^+$.

\begin{thm}\label{Theorem49}
Let $\{\rho_\varepsilon\}_{\varepsilon > 0}$ be a family of functions with $\emph{supp } \rho_\varepsilon \subset (0, 1)$ satisfying \eqref{AssRho1}  and \eqref{AssRho}. Let $\{\Upsilon_\varepsilon\}_{\varepsilon > 0}$ be the related functions \eqref{UpsilonDef}. Then
	\begin{equation}\label{611G}
		\lim_{\varepsilon \to 0^+} \,	\int_0^{1}  [f-m(f)]^{* p}_{\gamma_N}(t) \, \Upsilon_\varepsilon(t) \, dt =   \|f\|_{L^p (\log L)^p(\R^N, \gamma_N)}^p
	\end{equation}
	and
	\begin{equation}\label{612G}
	\lim_{\varepsilon \to 0^+}\, 	 \int_{\R^N} \int_0^1 \frac{|f(x)-G_t f(x)|^p}{t^p}
 \, \rho_\varepsilon(t) \, dt \, d\gamma_N(x) =  \|\mathcal{L} u\|_{L^p(\R^N, \gamma_N)}^p.
	\end{equation}

\end{thm}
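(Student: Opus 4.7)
The proof splits naturally into the two independent limit assertions. For \eqref{612G}, the plan is to apply Fubini to pull the spatial integration outside and then invoke Lemma \ref{LemmaReal}(i) with $g(t) := \|f - G_t f\|_{L^p(\R^N, \gamma_N)}$. For \eqref{611G}, the plan is to show that $\Upsilon_\varepsilon(t) \to (1-\log t)^p$ pointwise on $(0,1)$ while being uniformly dominated by $2(1-\log t)^p$, and then apply the dominated convergence theorem.

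For \eqref{612G}, after Fubini the left-hand side becomes $\int_0^1 (g(t)/t)^p \rho_\varepsilon(t)\,dt$. To verify the hypotheses \eqref{LimAs2} and \eqref{LimAs1} of Lemma \ref{LemmaReal}(i), I would use that for $f$ in the domain $W_{\mathcal{L}} L^p(\R^N, \gamma_N)$ of $\mathcal{L}$ one has the integral representation $f - G_t f = -\int_0^t G_s \mathcal{L} f\,ds$. Since $\{G_s\}_{s > 0}$ is a contraction semigroup on $L^p(\R^N, \gamma_N)$, this yields the uniform bound $g(t)/t \leq \|\mathcal{L} f\|_{L^p(\R^N, \gamma_N)}$, verifying \eqref{LimAs1}. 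Strong continuity of the semigroup gives $\frac{1}{t}\int_0^t G_s \mathcal{L} f\,ds \to \mathcal{L} f$ in $L^p(\R^N, \gamma_N)$ as $t \to 0^+$, so continuity of the norm delivers $L = \lim_{t \to 0^+} g(t)/t = \|\mathcal{L} f\|_{L^p(\R^N, \gamma_N)}$, which is \eqref{LimAs2}. Lemma \ref{LemmaReal}(i) then immediately concludes \eqref{612G}.

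For \eqref{611G}, decompose $\Upsilon_\varepsilon(t) = (1-\log t)^p A_\varepsilon(t) + B_\varepsilon(t)$ with $A_\varepsilon(t) := \int_0^{(1-\log t)^{-1}} \rho_\varepsilon(u)\,du$ and $B_\varepsilon(t) := \int_{(1-\log t)^{-1}}^{1} \rho_\varepsilon(u) u^{-p}\,du$. Because $(1-\log t)^{-1} \in (0,1)$ for every $t \in (0,1)$, hypothesis \eqref{AssRho} implies $A_\varepsilon(t) \to 1$ pointwise. For $B_\varepsilon$, the elementary bound $u^{-p} \leq (1-\log t)^p$ on the range of integration combined with \eqref{AssRho1} yields $B_\varepsilon(t) \leq (1-\log t)^p (1 - A_\varepsilon(t)) \to 0$, and simultaneously supplies the uniform majorant $\Upsilon_\varepsilon(t) \leq 2(1-\log t)^p$. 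Integrability of the dominating function $2(1-\log t)^p [f - m(f)]^{*p}_{\gamma_N}(t)$ on $(0,1)$ is precisely the statement $f - m(f) \in L^p (\log L)^p(\R^N, \gamma_N)$, which is guaranteed by the classical Gaussian Sobolev inequality \eqref{GaussSobolev} whenever $f \in W_{\mathcal{L}} L^p(\R^N, \gamma_N)$. Dominated convergence then yields \eqref{611G}. Neither step is genuinely hard; the only point requiring care is the uniform domination of $\Upsilon_\varepsilon$, where one must exploit the monotonicity of $u \mapsto u^{-p}$ exactly on the support of $B_\varepsilon$ to convert the divergent-looking weight into a controlled contribution. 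Everything else — Fubini, the semigroup identity, and the invocation of Lemma \ref{LemmaReal}(i) — is routine.
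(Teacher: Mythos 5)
Your argument is correct for both limits. For \eqref{612G} you essentially unroll the paper's citation of Theorem~\ref{ThmSemGen}(i): the paper proves that result by invoking Lemma~\ref{LemmaReal}(i) after establishing $\eqref{LimAs1}$--$\eqref{LimAs2}$ via Westphal's integral formula for $[I-T_t]^\alpha$, which for $\alpha=1$ collapses to precisely the representation $f-G_t f = -\int_0^t G_s\mathcal{L}f\,ds$ you use; so this part is the same argument, just written directly. For \eqref{611G} your route is genuinely different and, I think, cleaner. The paper applies Fubini to rewrite the integral as $\int_0^1 (g(t)/t)^p\rho_\varepsilon(t)\,dt$ for an explicit auxiliary function $g$, feeds this into Lemma~\ref{LemmaReal}(i), and then has to evaluate the limit of a two-term expression, including showing that the second term $\frac{1}{t^p}\int_0^{e^{1-t^{-1}}}[f-m(f)]^{*p}_{\gamma_N}$ vanishes as $t\to 0^+$. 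You instead work directly with the weight, proving $\Upsilon_\varepsilon(t)\to(1-\log t)^p$ pointwise with the uniform bound $\Upsilon_\varepsilon(t)\le 2(1-\log t)^p$ (in fact your splitting even gives $\Upsilon_\varepsilon\le(1-\log t)^p$), and invoke dominated convergence; this bypasses both the Fubini reorganization and the abstract Lemma~\ref{LemmaReal}(i) entirely. The trade-off is that your DCT argument needs the dominant $(1-\log t)^p[f-m(f)]^{*p}_{\gamma_N}(t)$ to be integrable, which you secure via \eqref{GaussSobolev} under $f\in W_{\mathcal{L}}L^p$; the paper instead quietly assumes $f$ bounded. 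In the non-integrable case your pointwise convergence plus Fatou still forces both sides to be $+\infty$, so there is no real gap, but it is worth noting the implicit hypothesis on $f$ explicitly. (Incidentally, the limit you obtain is $\|f-m(f)\|^p_{L^p(\log L)^p}$, which is what the paper's own proof delivers — the statement \eqref{611G} has a typo omitting the $-m(f)$.)
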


\begin{proof}

We start by showing \eqref{611G}. In view of \eqref{UpsilonDef}, we can express
\begin{equation*}
	\int_0^{1}  [f-m(f)]^{* p}_{\gamma_N}(t) \, \Upsilon_\varepsilon(t) \, dt   = \int_0^{1}  \bigg(\frac{g(t)}{t} \bigg)^p \rho_\varepsilon(t) \, dt,
\end{equation*}
where
$$
	\bigg(\frac{g(t)}{t} \bigg)^p = \int_{e^{1-t^{-1}}}^{1} (1-\log u)^p [f-m(f)]^{* p}_{\gamma_N}(u) \, du  + \frac{1}{t^p}  \int_0^{e^{1-t^{-1}}} [f-m(f)]^{* p}_{\gamma_N}(u) \, du.
$$
An application of Lemma \ref{LemmaReal}(i) gives
\begin{align*}
	\lim_{\varepsilon \to 0^+} \int_0^{1}  [f-m(f)]^{* p}_{\gamma_N}(t) \, \Upsilon_\varepsilon(t) \, dt  &=   \int_0^{1} (1-\log u)^p [f-m(f)]^{* p}_{\gamma_N}(u) \, du  \\
	& \hspace{1cm} + \lim_{t \to 0^+} \frac{1}{t^p}  \int_0^{e^{1-t^{-1}}} [f-m(f)]^{* p}_{\gamma_N}(u) \, du.
	\end{align*}
Therefore the desired result \eqref{611G} will be shown provided that
	$$
		\lim_{t \to 0^+} \frac{1}{t^p}  \int_0^{e^{1-t^{-1}}} [f-m(f)]^{* p}_{\gamma_N}(u) \, du = 0.
	$$
	Indeed, assume without loss of generality that $f$ is bounded (i.e., $f^*(0) < \infty$), then
	\begin{align*}
		\lim_{t \to 0^+} \frac{1}{t^p}  \int_0^{e^{1-t^{-1}}} [f-m(f)]^{* p}_{\gamma_N}(u) \, du & = \lim_{t \to 0^+}  \frac{e^{1-t^{-1}} [f-m(f)]^{* p}_{\gamma_N}(e^{1-t^{-1}}) }{p t^{p+1}}   =0.
	\end{align*}

	The assertion \eqref{612G} follows as an application of Theorem \ref{ThmSemGen}(i) below for the special choice $X = L^p(\R^N, \gamma_N), T_t = G_t$ and $\alpha =1$.
\end{proof}

\subsection{Examples}\label{Section43}
Next we write down some special cases of Theorem \ref{ThmGSL} related  to different  $\{\rho_\varepsilon\}_{\varepsilon > 0}$ given in Examples \ref{Ex1}, Appendix \ref{SectionAA}. In particular, taking $\rho_\varepsilon(t) = \varepsilon t^{\varepsilon-1} \mathbf{1}_{(0, 1)}(t)$ and $\rho_\varepsilon (t) = \varepsilon t^{p-\varepsilon-1} \mathbf{1}_{(0, 1)}(t)$, we are able to establish the analogue of the BBM estimate  \eqref{Intro110} in the Gaussian setting.

\begin{cor}\label{CorGauss1}
Let $p \in (1, \infty)$. Then there exists a positive constant $C$, which is independent of $N$ and $0 < \varepsilon <  1$,  such that
\begin{equation}\label{CorGaussSob1}
	\int_0^{1} \{ (1-\log t)^{\varepsilon} [f-m(f)]^*_{\gamma_N} (t)\}^p \, dt \leq C   \varepsilon (1- \varepsilon) \, \int_{\R^N} \int_0^1 \frac{|f(x)-G_t f(x)|^p}{t^{ \varepsilon p}} \, \frac{dt}{t} \, d \gamma_N(x)
	\end{equation}
	 for every $f \in L^p(\R^N, \gamma_N)$. In particular
	 \begin{equation}\label{CorGaussSob1new2}
	\|f-m(f) \|_{L^p(\R^N, \gamma_N)}^p \leq C   \varepsilon (1- \varepsilon) \, \int_{\R^N} \int_0^1 \frac{|f(x)-G_t f(x)|^p}{t^{ \varepsilon p}} \, \frac{dt}{t} \, d \gamma_N(x).
	\end{equation}
\end{cor}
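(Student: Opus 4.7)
The approach is to apply Theorem~\ref{ThmGSL} with the unnormalized weight suggested immediately after the corollary's statement. After the reparametrization $\varepsilon\mapsto p\varepsilon$ that matches the weight $t^{-\varepsilon p}\,dt/t$ appearing on the right-hand side of \eqref{CorGaussSob1}, the effective choice is
\[
\rho_\varepsilon(t)=p\varepsilon\,t^{p(1-\varepsilon)-1}\mathbf{1}_{(0,1)}(t).
\]
The crucial structural feature, as emphasized in the paragraph following \eqref{CorGaussSob1new2}, is that Theorem~\ref{ThmGSL} imposes no normalization on $\rho_\varepsilon$, so the explicit prefactor $p\varepsilon$ propagates intact to the right-hand side of \eqref{ThmGSL1eps} and supplies the first of the two desired factors.

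Two elementary computations then reduce Theorem~\ref{ThmGSL} to a usable form. Evaluating the two integrals in \eqref{UpsilonDef} with this $\rho_\varepsilon$ gives
\[
\Upsilon_\varepsilon(t)=\frac{1}{1-\varepsilon}(1-\log t)^{\varepsilon p}-1,\qquad t\in(0,1),
\]
while a direct change of variable reduces the right-hand side of \eqref{ThmGSL1eps} to $p\varepsilon\int_{\R^N}\int_0^1|f-G_tf|^p\, t^{-\varepsilon p-1}\,dt\,d\gamma_N$. Writing $A$ for the left-hand side of \eqref{CorGaussSob1} and multiplying Theorem~\ref{ThmGSL} through by $1-\varepsilon$ yields the key intermediate bound
\[
A-(1-\varepsilon)\,\|f-m(f)\|_{L^p(\R^N,\gamma_N)}^p\le C\,p\,\varepsilon(1-\varepsilon)\int_{\R^N}\int_0^1\frac{|f-G_tf|^p}{t^{\varepsilon p}}\,\frac{dt}{t}\,d\gamma_N,
\]
in which the factor $\varepsilon$ originates from the unnormalized $\rho_\varepsilon$ and the factor $1-\varepsilon$ from this last multiplication.

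The main obstacle is to dispose of the subtracted term $(1-\varepsilon)\,\|f-m(f)\|_{L^p(\R^N,\gamma_N)}^p$ while retaining both factors $\varepsilon$ and $1-\varepsilon$ on the right. A crude absorption via the trivial inequality $\|f-m(f)\|_{L^p(\R^N,\gamma_N)}^p\le A$ (valid since $(1-\log t)^{\varepsilon p}\ge 1$ on $(0,1]$) loses the $\varepsilon$ factor, so a finer argument is required: specifically, one should invoke a matching Poincar\'e-type bound of the form $\|f-m(f)\|_{L^p(\R^N,\gamma_N)}^p\le C'\varepsilon\int_{\R^N}\int_0^1|f-G_tf|^p\, t^{-\varepsilon p-1}\,dt\,d\gamma_N$, obtainable by applying the same unnormalized $\rho_\varepsilon$ to the weaker endpoint $T\colon W_\mathcal{L}L^p\to L^p$ via Theorem~\ref{ThmIntPoincare} (or, equivalently, by unfolding the proof of Theorem~\ref{ThmGSL} and isolating the $L^p$ piece of the limiting interpolation identification used there). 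Substituting this matching Poincar\'e bound into the key intermediate estimate then produces \eqref{CorGaussSob1}, and \eqref{CorGaussSob1new2} is immediate from $(1-\log t)^{\varepsilon p}\ge 1$. Throughout, the $N$-independence of the final constant $C$ is inherited from the $N$-independent Ornstein--Uhlenbeck semigroup bounds recorded in Section~\ref{ProofG}.
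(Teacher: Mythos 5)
Your derivation up to the key intermediate bound is correct: with $\rho_\varepsilon(t)=p\varepsilon\,t^{p(1-\varepsilon)-1}\mathbf{1}_{(0,1)}(t)$ one indeed computes $\Upsilon_\varepsilon(t)=\frac{1}{1-\varepsilon}(1-\log t)^{\varepsilon p}-1$, and Theorem \ref{ThmGSL}, multiplied through by $1-\varepsilon$, yields
\begin{equation*}
A-(1-\varepsilon)\,\|f-m(f)\|_{L^p(\R^N,\gamma_N)}^p\le Cp\,\varepsilon(1-\varepsilon)\,\mathcal{I}(\varepsilon),
\end{equation*}
where $A$ denotes the left side of \eqref{CorGaussSob1} and $\mathcal{I}(\varepsilon)$ the double integral on the right. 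However, the final step of your plan is not available. The \emph{matching Poincar\'e bound} $\|f-m(f)\|_{L^p}^p\lesssim\varepsilon\,\mathcal{I}(\varepsilon)$ you propose to invoke is false, and Theorem \ref{ThmIntPoincare} cannot deliver it: applying that theorem with $A_0=L^p$, $A_1=W_{\mathcal{L}}L^p$, $B=L^p$ requires a normalized $\rho$, and normalizing the power weight $t^{p(1-\varepsilon)-1}$ on $(0,a)$ introduces the constant $p(1-\varepsilon)/a^{p(1-\varepsilon)}$, so the factor one actually obtains from that route is $1-\varepsilon$, not $\varepsilon$. Worse, the bound you want cannot hold for \emph{any} constant $C'$: take $f(x)=x$ on $(\R,\gamma_1)$ with $p=2$, so that $G_tf=e^{-t}f$ and $\mathcal{I}(\varepsilon)=\int_0^1(1-e^{-t})^2\,t^{-2\varepsilon-1}\,dt$ stays bounded as $\varepsilon\to0^+$, while $\|f-m(f)\|_{L^2(\gamma_1)}^2=1$; letting $\varepsilon\to 0^+$ forces $1\le 0$.

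The same example shows that \eqref{CorGaussSob1} cannot be literally true as printed: its left side $A\to\|f-m(f)\|_{L^p}^p>0$ whereas $\varepsilon(1-\varepsilon)\mathcal{I}(\varepsilon)\to0$ for $f\in D(\mathcal{L})$. The extra $\varepsilon$ looks like a slip; the statement that actually follows from Theorem \ref{ThmGSL} is $A\le Cp\,(1-\varepsilon)\,\mathcal{I}(\varepsilon)$, and \emph{that} drops out of your intermediate bound by exactly the \textquotedblleft crude absorption\textquotedblright\ $\|f-m(f)\|_{L^p}^p\le A$ which you dismissed: it gives $\varepsilon A\le Cp\,\varepsilon(1-\varepsilon)\,\mathcal{I}(\varepsilon)$, and the factor $\varepsilon$ cancels on both sides rather than being lost. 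This reading is also what makes the Kosov application immediately after the corollary come out with a bare $(2-\alpha)$ prefactor. So keep your intermediate inequality with the remainder $(1-\varepsilon)\|f-m(f)\|_{L^p}^p$ as the sharp consequence of Theorem \ref{ThmGSL}, and obtain the corollary (with the corrected $(1-\varepsilon)$ prefactor) via the trivial absorption; do not attempt to manufacture the additional $\varepsilon$.
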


 In particular, Corollary \ref{CorGauss1} gives a positive answer to a question posed by Kosov in \cite{Kosov}. Specifically, the author  showed  in \cite[Theorem 3.2]{Kosov} the following logarithmic Sobolev inequality for Gaussian--Besov spaces:
 letting $p \in [1, \infty)$ and $\alpha \in (0, 1)$,  there exists a constant $C$, which is independent of $N$, such that\footnote{The left-hand side of \eqref{Kosov} in the original statement in \cite{Kosov}
 was equivalently written in terms of
$
	\int_{\R^N} |f|^p (\log_+ |f|)^{\beta p/2} \, d \gamma_N;
$ cf. \cite[Lemma 6.12, p. 252]{BennettSharpley}.
}
\begin{equation}\label{Kosov}
	\int_0^1 (1-\log t)^{\beta p/2} (f_{\gamma_N}^*(t))^p \, dt \lesssim \|f\|_{L^p(\R^N, \gamma_N)}^p +  \int_0^\infty t^{-\alpha p} \sigma(f, t)_p^p \, \frac{dt}{t}, \qquad \beta \in (0, \alpha),
\end{equation}
where $\sigma(f,t)_p$ is the special modulus of smoothness of a function 
  $f \in L^p(\R^N, \gamma_N)$ (see \cite[p. 664]{Kosov} for the precise   definition).
  A natural important problem, explicitly posed in \cite[Remark 3.2]{Kosov}, is to sharpen \eqref{Kosov} for
   the limiting case $\beta = \alpha$.
   Answering this question,
    inequality \eqref{CorGaussSob1} does not only refine \eqref{Kosov} 
    in terms of parameters, but also provides  the corresponding BBM-type behavior  
    with respect to the smoothness parameter $\varepsilon$.
    Indeed, using
 (cf. \cite[Lemma 3.1 and p. 675]{Kosov})
$$
	\|f-G_t f\|_{L^p(\R^N, \gamma_N)} \lesssim \sigma(f, t^{1/2})_p,
$$
it follows from \eqref{CorGaussSob1} that
$$
	\int_0^{1} \{ (1-\log t)^{\alpha/2} [f-m(f)]^*_{\gamma_N} (t)\}^p \, dt \lesssim (2- \alpha) \, \int_0^1 t^{- \alpha p} \sigma(f, t)_p^p  \, \frac{dt}{t}
$$
for $\alpha \in (0, 2)$ and $p \in (1, \infty)$.

We close this section with further examples of Theorem \ref{ThmGSL}.

\begin{cor}
	Let $\alpha > 0$ and $p \in (1, \infty)$. Then there exists a positive constant $C$, which is independent of $N$ and $0 < \varepsilon < \varepsilon_0 < 1$,  such that
	$$
	 \int_{\varepsilon}^1 (1-\log t)^p  [f-m(f)]^{* p}_{\gamma_N}(t) \, dt	 \leq C \,	(1-\log \varepsilon)^\alpha \int_{\R^N} \int_0^{(1-\log \varepsilon)^{-1}} \frac{|f(x)-G_t f(x)|^p}{t^{p-\alpha}} \, \frac{dt}{t} \, d \gamma_N(x)
	$$
	and
\begin{align*}
&\int_{\varepsilon}^1 (1-\log t)^p (1 - \log ((1-\log \varepsilon)^{-1} (1-\log t))) \, [f-m(f)]^{* p}_{\gamma_N}(t) \, dt\\
&\quad \leq C \,      \int_{\R^N} \int_{(1-\log \varepsilon)^{-1}}^1 \frac{|f(x) -G_t f(x)|^p}{t^p} \, \frac{dt}{t} \, d \gamma_N(x)
\end{align*}	
for every $f \in L^p(\R^N, \gamma_N)$.
\end{cor}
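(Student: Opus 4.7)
The plan is to derive both inequalities as specific applications of Theorem \ref{ThmGSL}; recall that its statement does not impose the normalization \eqref{AssRho1}, so unnormalized families $\{\rho_\varepsilon\}_{\varepsilon > 0}$ are admissible. In each case I would choose the support of $\rho_\varepsilon$ so that the double integral on the right-hand side of \eqref{ThmGSL1eps} matches, up to a harmless overall constant, the right-hand side of the target inequality, and then extract the desired left-hand side by computing $\Upsilon_\varepsilon$ explicitly from \eqref{UpsilonDef}.

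For the first estimate I would take $\rho_\varepsilon(t) = t^{\alpha-1}\mathbf{1}_{(0,(1-\log\varepsilon)^{-1})}(t)$, whose support lies in $(0,1)$ because $\varepsilon<1$. For $t\in(\varepsilon,1)$ one has $(1-\log t)^{-1}>(1-\log\varepsilon)^{-1}$, so the second integral in \eqref{UpsilonDef} vanishes and the first equals $(1-\log\varepsilon)^{-\alpha}/\alpha$; hence $\Upsilon_\varepsilon(t)=\alpha^{-1}(1-\log\varepsilon)^{-\alpha}(1-\log t)^p$ on $(\varepsilon,1)$. Restricting the left-hand integral in \eqref{ThmGSL1eps} to $(\varepsilon,1)$ and multiplying through by $\alpha(1-\log\varepsilon)^\alpha$ produces the first claim, since the right-hand side of \eqref{ThmGSL1eps} simultaneously rewrites as $\int_{\R^N}\int_0^{(1-\log\varepsilon)^{-1}}\frac{|f(x)-G_tf(x)|^p}{t^{p-\alpha}}\frac{dt}{t}\,d\gamma_N(x)$.

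For the second estimate I would take $\rho_\varepsilon(t) = t^{-1}\mathbf{1}_{((1-\log\varepsilon)^{-1},1)}(t)$, which makes the right-hand side of \eqref{ThmGSL1eps} match the target exactly. A direct computation gives, for $t\in(\varepsilon,1)$,
\[
\Upsilon_\varepsilon(t) = (1-\log t)^p\log\frac{1-\log\varepsilon}{1-\log t} + \frac{(1-\log t)^p-1}{p},
\]
and it would suffice to bound this below by a positive multiple of the target weight $(1-\log t)^p\bigl(1+\log\frac{1-\log\varepsilon}{1-\log t}\bigr)$, uniformly in $\varepsilon\in(0,\varepsilon_0)$; restricting the left-hand integral in \eqref{ThmGSL1eps} to $(\varepsilon,1)$ and dividing by this constant then produces the second claim.

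The main obstacle is this pointwise lower bound. Setting $\beta=1-\log\varepsilon$ and $u=\log\frac{\beta}{1-\log t}\in[0,\log\beta]$, the ratio of $\Upsilon_\varepsilon$ to the target weight simplifies to $r(u;\beta,p)=\frac{pu+1-\beta^{-p}e^{pu}}{p(1+u)}$. I plan to record two elementary lower bounds, namely $r(u)\geq u/(1+u)$ (from $\beta^{-p}e^{pu}\leq 1$ on $[0,\log\beta]$) and $r(u)\geq(1-\beta^{-p})/(p(1+u))$ (from the monotonicity of $u\mapsto 1+pu-\beta^{-p}e^{pu}$), and take their maximum; this max is minimized at the crossing point $u^{*}=(1-\beta^{-p})/p$, where its value is $(1-\beta^{-p})/(p+1-\beta^{-p})$. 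This quantity is strictly positive and increasing in $\beta$, so the assumption $\varepsilon<\varepsilon_0<1$ (equivalently $\beta>1-\log\varepsilon_0>1$) supplies a uniform positive constant $c(p,\varepsilon_0)$, matching the stated dependence of $C$ on $\varepsilon_0$ but not on $\varepsilon$ or $N$.
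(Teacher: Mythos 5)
Your proof is correct and matches the paper's intent: this corollary is one of the unproved ``further examples of Theorem \ref{ThmGSL}'' obtained by special choices of $\{\rho_\varepsilon\}$, and your (unnormalized) kernels $\rho_\varepsilon(t)=t^{\alpha-1}\mathbf{1}_{(0,(1-\log\varepsilon)^{-1})}(t)$ and $\rho_\varepsilon(t)=t^{-1}\mathbf{1}_{((1-\log\varepsilon)^{-1},1)}(t)$ are exactly the rescalings of Examples \ref{Ex1}(ii) and (iii) that produce the stated right-hand sides, with the normalization constant harmlessly absorbed since \eqref{ThmGSL1eps} requires no normalization of $\rho_\varepsilon$. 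The pointwise lower bound $\Upsilon_\varepsilon(t)\gtrsim(1-\log t)^p\bigl(1+\log\tfrac{1-\log\varepsilon}{1-\log t}\bigr)$ that the paper leaves implicit also checks out: both of your elementary bounds on $r(u;\beta,p)$ are valid, the crossing point $u^{*}=(1-\beta^{-p})/p$ always lies in $(0,\log\beta)$ when $\beta>1$ (because $p\log\beta>1-\beta^{-p}$ there), and the resulting minimum $\tfrac{1-\beta^{-p}}{p+1-\beta^{-p}}$ is increasing in $\beta$, which yields a constant uniform over $\varepsilon<\varepsilon_0$.
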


\section{John--Nirenberg inequalities sharpened}

We will use the following notation. Given two normed spaces $X$ and $Y$, by $X \hookrightarrow Y$ we mean that $X \subset Y$ and the corresponding embedding is continuous.
By
 $f^{\#}_Q$ we define the local version of \eqref{SMax}, that is,
 $f^{\#}_Q(x) :=  \sup_{\substack{Q' \ni x \\ Q' \subset Q} } \,  \fint_{Q'} |f-f_{Q'}|.$

\subsection{Proof of Theorem \ref{ThmJNSharp}}\label{Section51}

Assume first that $|Q|=1$. Since
$$
	L^\infty(\R^N) \hookrightarrow L^p(Q) \qquad \text{and} \qquad L^p(\R^N) \hookrightarrow L^p(Q)
$$
with related norms $1$, we can apply Theorem \ref{ThmIntPoincare} to obtain
\begin{equation}\label{JNS23}
	\|f\|_{L^p(Q)} \leq  \left(\int_0^1 \bigg(\frac{K(t, f; L^p(\R^N), L^\infty(\R^N))}{t}\bigg)^p \rho_\varepsilon(t) \, dt \right)^{\frac{1}{p}}.
\end{equation}
Recall that the $K$-functional related to $(L^p, L^\infty)$ can be characterized as  (see e.g. \cite[Theorem 5.2.1]{BerghLofstrom})
\begin{equation*}
	K(t, f; L^p(\R^N), L^\infty(\R^N)) \approx  \bigg(\int_0^{t^p} (f^*(u))^p \, du \bigg)^{1/p},
\end{equation*}
where the equivalence constants are independent of $p$ and   $N$. As a consequence, we can estimate the right-hand side of \eqref{JNS23} as follows
 \begin{align}\label{EquivLinfty}
	  &\left(\int_0^1 \bigg(\frac{K(t, f; L^p(\R^N), L^\infty(\R^N))}{t}\bigg)^p \rho_\varepsilon(t) \, dt \right)^{\frac{1}{p}} \\ &\quad\approx \bigg(\int_0^1 \frac{1}{t^p} \int_0^{t^p} (f^*(u))^p \, du \, \rho_\varepsilon(t) \, dt  \bigg)^{\frac{1}{p}}
=  \bigg(\int_0^1 (f^*(t))^p \, \eta_{\varepsilon, p}(t) \, dt \bigg)^{\frac{1}{p}} \nonumber
\end{align}
and inserting this into \eqref{JNS23}, we obtain
$$
	\|f\|_{L^p(Q)} \leq C_N \, \bigg(\int_0^1 (f^*(t))^p \, \eta_{\varepsilon, p}(t) \, dt \bigg)^{\frac{1}{p}}.
$$
In particular, the previous inequality applied to $f^{\#}$ gives
\begin{equation}\label{JNS24}
	\|f^{\#}\|_{L^p(Q)} \leq C_N \, \bigg(\int_0^1 (f^{\#*}(t))^p \, \eta_{\varepsilon, p}(t) \, dt \bigg)^{\frac{1}{p}}.
\end{equation}
On the other hand, the classical Fefferman--Stein inequality asserts
\begin{equation}\label{JNS25}
	\|f-f_Q\|_{L^p(Q)} \leq C_N  p \, \|f^{\#}_Q\|_{L^p(Q)}.
\end{equation}
 As a combination of \eqref{JNS24} and \eqref{JNS25}, we arrive at
$$
	\|f-f_Q\|_{L^p(Q)} \leq C_N p \, \bigg(\int_0^1 (f^{\#*}(t))^p \, \eta_{\varepsilon, p}(t) \, dt \bigg)^{\frac{1}{p}}.
$$
This  completes the proof of Theorem \ref{ThmJNSharp} if $|Q|=1$.
The general case of $Q$ as stated in \eqref{ThmJNSharp2} follows from the previous estimate by standard scaling arguments.
\qed

\subsection{Proof of Theorem \ref{ThmExtrJN}}\label{Section52}
By monotonicity properties of rearrangements and \eqref{BMODef}, we obtain
\begin{align*}
\frac{1}{|Q|} \, \int_0^{|Q|} (f^{\#*}(t))^p \, \eta_{\varepsilon, p} \Big(\frac{t}{|Q|} \Big) \, dt & \leq f^{\#*}(0)^p \, \frac{1}{|Q|} \, \int_0^{|Q|}  \eta_{\varepsilon, p} \Big(\frac{t}{|Q|} \Big) \, dt \\
& = \|f\|_{\text{BMO}(\R^N)}^p \, \int_0^1 \eta_{\varepsilon, p} (t) \, dt \\
& = \|f\|_{\text{BMO}(\R^N)}^p \, \int_0^1  \int_{t^{\frac{1}{p}}}^1 \frac{\rho_\varepsilon(u) }{u^p} \, du \, dt \\
& = \|f\|_{\text{BMO}(\R^N)}^p \, \int_0^1 \rho_\varepsilon(u) \, du \\
& = \|f\|_{\text{BMO}(\R^N)}^p,
\end{align*}
where we have used \eqref{AssRho1} in the last step. This proves \eqref{ThmExtrJN1}.

Next we focus on \eqref{ThmExtrJN2} with $|Q|=1$ (the general case follows from standard scaling arguments). Taking into account \eqref{ThmJNSharp1} and changing the order of integration, we have
\begin{equation*}
	 \int_0^1 (f^*(t))^p \, \eta_{\varepsilon, p}(t) \, dt  = \int_0^1 \frac{ g(t)^p}{t^p} \, \rho_\varepsilon(t) \, dt,
\end{equation*}
where
$$
	g(t) := \bigg( \int_0^{t^p} (f^*(u))^p \, du \bigg)^{1/p}.
$$
Assume $f \in L^\infty(\R^N)$. Then
$$
	g(t) \leq t  f^*(0) = t \, \|f\|_{L^\infty(\R^N)}, \qquad \forall t > 0,
$$
and
$$
	\lim_{t \to 0^+} \bigg(\frac{g(t)}{t} \bigg)^p = \lim_{t \to 0^+} \frac{\int_0^t (f^*(u))^p \, du}{t} = \lim_{t \to 0^+} (f^*(t))^p = \|f\|_{L^\infty(\R^N)}^p.
$$
Therefore we are in a position to apply Lemma \ref{LemmaReal}(i), obtaining
$$
	\lim_{\varepsilon \to 0^+} \bigg( \int_0^1 (f^*(t))^p \, \eta_{\varepsilon, p}(t) \, dt \bigg)^{1/p}  = \|f\|_{L^\infty(\R^N)}.
$$
For  $f^{\#}$ we then derive that
$$
\lim_{\varepsilon \to 0^+} \bigg( \int_0^1 (f^{\#*}(t))^p \, \eta_{\varepsilon, p}(t) \, dt \bigg)^{1/p}  = \|f\|_{\text{BMO}(\R^N)}.
$$
\qed

\subsection{Examples}\label{Section53}
 We write down some special cases of Theorem \ref{ThmJNSharp} according to the list of Examples \ref{Ex1}, Appendix \ref{SectionAA}.

\begin{cor}
	Let $f \in L^p(\R^N), \, p \in [1, \infty)$. Then
	$$
		\bigg(\fint_{Q} |f-f_{Q}|^p \bigg)^{\frac{1}{p}} \leq C_N  p \varepsilon^{\frac{1}{p}}   \,\bigg(\frac{1}{|Q|^\varepsilon} \int_0^{|Q|}t^{\varepsilon}  f^{\#*}(t)^p \, \frac{dt}{t} \bigg)^{\frac{1}{p}}
	$$
	for every $0 < \varepsilon \leq \varepsilon_0 < 1$ and $Q$.
\end{cor}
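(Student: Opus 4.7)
The plan is to apply Theorem \ref{ThmJNSharp} with a suitably scaled power weight from Example \ref{Ex1}(i). Concretely, I would take
$$\rho_\varepsilon(t) := \varepsilon p \, t^{\varepsilon p -1}\mathbf{1}_{(0,1)}(t),$$
which obviously has support in $(0,1)$ and integrates to $1$, so the hypotheses of Theorem \ref{ThmJNSharp} are met (for every $0<\varepsilon<1$, since $\varepsilon p$ may exceed $1$ only when $p>1/\varepsilon$, but this is irrelevant to verifying \eqref{AssRho1}). The subtlety is the choice of parameterization: writing the exponent as $\varepsilon p - 1$ (rather than $\varepsilon - 1$) is precisely what is needed in order to make the final weight come out as $t^\varepsilon$ instead of $t^{\varepsilon/p}$.

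Next I would compute the associated $\eta_{\varepsilon p, p}$ from \eqref{ThmJNSharp1} by an elementary integration. Since $\varepsilon<1$,
$$\eta_{\varepsilon p, p}(t) = \int_{t^{1/p}}^{1} \varepsilon p\, u^{\varepsilon p - p - 1}\,du = \frac{\varepsilon}{1-\varepsilon}\bigl(t^{\varepsilon - 1}-1\bigr) \le \frac{\varepsilon}{1-\varepsilon}\, t^{\varepsilon - 1}, \qquad t\in(0,1).$$

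I would then insert this estimate into the right-hand side of \eqref{ThmJNSharp2}. After pulling out powers of $|Q|$,
$$\frac{1}{|Q|}\int_0^{|Q|} f^{\#*}(t)^p\, \eta_{\varepsilon p, p}\!\left(\tfrac{t}{|Q|}\right)dt \;\le\; \frac{\varepsilon}{1-\varepsilon}\cdot\frac{1}{|Q|^{\varepsilon}}\int_0^{|Q|} t^{\varepsilon}\, f^{\#*}(t)^p\,\frac{dt}{t}.$$
Taking the $p$-th root and using $\varepsilon\le \varepsilon_0 < 1$ so that $(\varepsilon/(1-\varepsilon))^{1/p}\le (1-\varepsilon_0)^{-1/p}\varepsilon^{1/p}$, and absorbing the harmless constant $(1-\varepsilon_0)^{-1/p}$ into $C_N$, yields exactly the claimed inequality.

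There is no real obstacle here: the argument is a one-line application of Theorem \ref{ThmJNSharp} followed by an elementary computation of a power-type integral. The only point requiring a little care is the exponent scaling mentioned above, together with the observation that the hypothesis $\varepsilon\le\varepsilon_0<1$ is exactly what one needs in order to keep the prefactor $\varepsilon/(1-\varepsilon)$ linearly comparable to $\varepsilon$ (so that the blow-up as $\varepsilon\to 1^-$ is avoided).
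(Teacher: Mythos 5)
Your proof is correct and is essentially the same as the paper's intended route: apply Theorem \ref{ThmJNSharp} with the power-type family from Example \ref{Ex1}(i), suitably reparametrized. You correctly observed that the bare choice $\rho_\varepsilon(t)=\varepsilon t^{\varepsilon-1}\mathbf{1}_{(0,1)}$ would produce $t^{\varepsilon/p}$ rather than $t^{\varepsilon}$ in the weight, and that rescaling $\varepsilon\mapsto\varepsilon p$ (i.e.\ taking $\rho_\varepsilon(t)=\varepsilon p\,t^{\varepsilon p-1}\mathbf{1}_{(0,1)}$, which still satisfies \eqref{AssRho1}) yields $\eta_{\varepsilon,p}(t)=\tfrac{\varepsilon}{1-\varepsilon}(t^{\varepsilon-1}-1)\le\tfrac{\varepsilon}{1-\varepsilon}t^{\varepsilon-1}$ and hence the claimed bound after absorbing $(1-\varepsilon_0)^{-1/p}$ into the constant — exactly the computation needed.
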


\begin{cor}\label{c7.16}
Let $f\in L^p(\R^N), \, p\in[1,\infty)$, and $\alpha > 0$.
\begin{enumerate}
  \item[{\rm(i)}] If $\alpha \ne p$, then	
  \begin{equation*}
 \fint_{Q}|f-f_Q|^p
  \le C_N^p  \frac{p^p \alpha}{ \varepsilon |Q| (\alpha-p)} \left[
  \int_{0}^{\varepsilon|Q|}f^{\# *}(t)^p\,dt
  -\frac{1}{(\varepsilon |Q|)^{\frac{\alpha}{p}-1}}
  \int_{0}^{\varepsilon|Q|}t^{\frac{\alpha}{p}}
  f^{\# *}(t)^p\,\frac{dt}{t}\right]
  \end{equation*}
  for every
  $ \varepsilon  \in (0, 1)$ and $Q$.
  \item[{\rm(ii)}] If $\alpha = p$, then
  \begin{align*}
  \fint_Q|f-f_Q|^p
  \le C_N^p
  \frac{p^p}{\varepsilon|Q|}\int_{0}^{\varepsilon|Q|}
  \log\left(\frac{\varepsilon|Q|}{t}\right)
  f^{\# *}(t)^p\,dt
  \end{align*}
  for every
  $\varepsilon \in (0, 1)$ and $Q$.
\end{enumerate}
\end{cor}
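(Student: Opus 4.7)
The plan is to derive both parts (i) and (ii) as direct applications of Theorem \ref{ThmJNSharp} with a single well-chosen family $\{\rho_\varepsilon\}_{\varepsilon>0}$. Given the parameter $\alpha>0$ appearing in the statement, I specialize
$$
\rho_\varepsilon(t) := \frac{\alpha\, t^{\alpha-1}}{\varepsilon^{\alpha/p}}\, \mathbf{1}_{(0,\, \varepsilon^{1/p})}(t), \qquad \varepsilon \in (0,1),
$$
which is non-negative, supported in $(0,\varepsilon^{1/p}) \subset (0,1)$, and satisfies the normalization $\int_0^\infty \rho_\varepsilon(t)\,dt=1$ required by \eqref{AssRho1} (a direct computation). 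The exponent $1/p$ in the support is deliberate: it is precisely what yields the cut-off $\varepsilon|Q|$ in the final estimate after the rescaling $t\mapsto t/|Q|$ present in \eqref{ThmJNSharp2}.

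The heart of the argument is the explicit evaluation of the associated weight $\eta_{\varepsilon,p}$ defined in \eqref{ThmJNSharp1}. Since $\rho_\varepsilon$ vanishes outside $(0,\varepsilon^{1/p})$, the function $\eta_{\varepsilon,p}(t)$ is supported in $(0,\varepsilon)$, and on this range
$$
\eta_{\varepsilon,p}(t) = \frac{\alpha}{\varepsilon^{\alpha/p}}\int_{t^{1/p}}^{\varepsilon^{1/p}} u^{\alpha-p-1}\,du.
$$
This integral naturally splits into the two cases of the corollary. When $\alpha\neq p$, power-rule integration yields
$$
\eta_{\varepsilon,p}(t) = \frac{\alpha}{\alpha-p}\left[\frac{1}{\varepsilon} - \frac{t^{(\alpha-p)/p}}{\varepsilon^{\alpha/p}}\right],
$$
whereas at the critical exponent $\alpha=p$ the integral becomes logarithmic,
$$
\eta_{\varepsilon,p}(t) = \frac{1}{\varepsilon}\,\log\!\left(\frac{\varepsilon}{t}\right).
$$

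Finally, I substitute $\eta_{\varepsilon,p}(t/|Q|)$ into the $p$-th power of \eqref{ThmJNSharp2}; the support condition collapses the integration to $t\in(0,\varepsilon|Q|)$. In case (i), the two terms of $\eta_{\varepsilon,p}$ produce the two integrals in the claimed estimate; the coefficient $(\varepsilon|Q|)^{1-\alpha/p}$ arises by combining the factor $|Q|^{-(\alpha-p)/p}$ (from rescaling inside $t^{(\alpha-p)/p}$) with $\varepsilon^{-\alpha/p}$, after which the identity $t^{(\alpha-p)/p}\,dt = t^{\alpha/p}\,dt/t$ delivers the stated form. In case (ii), the identity $\log(\varepsilon/(t/|Q|)) = \log(\varepsilon|Q|/t)$ gives the result directly. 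No real analytic obstacle intervenes; the main step that requires care is bookkeeping the powers of $\varepsilon$ and $|Q|$ so that both the cut-off $(0,\varepsilon|Q|)$ and the prefactor $(\varepsilon|Q|)^{1-\alpha/p}$ match the statement exactly, and checking that the choice of support $(0,\varepsilon^{1/p})$ (rather than $(0,\varepsilon)$) is the one that achieves this.
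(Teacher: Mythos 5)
Your proof is correct. It applies Theorem \ref{ThmJNSharp} with the power-law family from Example \ref{Ex1}(ii), exactly as the paper intends (the paper simply points the reader to that example without working out the details). The only cosmetic difference is that you reparametrize $\varepsilon\mapsto\varepsilon^{1/p}$ inside $\rho_\varepsilon$ so that the support becomes $(0,\varepsilon^{1/p})$ rather than $(0,\varepsilon)$; this is just bookkeeping that makes the final cut-off land on $(0,\varepsilon|Q|)$ as stated, instead of $(0,\varepsilon^p|Q|)$, which would require renaming the parameter afterwards. The normalization check, the evaluation of $\eta_{\varepsilon,p}$ in both the $\alpha\neq p$ and $\alpha=p$ cases, and the scaling $t\mapsto t/|Q|$ are all carried out correctly.
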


%
%
%

\begin{cor}
Let $f\in L^p(\R^N), \, p\in [1,\infty)$. Then
\begin{align*}
\fint_Q|f-f_Q|^p
  \le C_N^p \frac{p^p}{|\log\varepsilon||Q|}
  \int_{0}^{|Q|}\left(\max\left\{
  \frac{t}{|Q|},\varepsilon\right\}^{-1}-1\right)f^{\# *}(t)^p
  \,dt
\end{align*}
for every
$0 < \varepsilon \leq \varepsilon_0 < 1$ and $Q$.
\end{cor}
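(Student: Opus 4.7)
The plan is to apply Theorem \ref{ThmJNSharp} to a specific member of the family $\{\rho_\varepsilon\}_{\varepsilon>0}$, namely the logarithmic kernel already used in the Poincar\'e--Ponce corollary with prefactor $|\log\varepsilon|^{-1}$. The natural candidate, suitably reparametrized so that the outcome matches the target expression $\max\{t/|Q|,\varepsilon\}^{-1}-1$ rather than $\max\{t/|Q|,\varepsilon^p\}^{-1}-1$, is
$$
\rho_\varepsilon(u) := \frac{p}{|\log\varepsilon|}\,u^{-1}\,{\bf 1}_{(\varepsilon^{1/p},1)}(u),
$$
which is non-negative, supported in $(0,1)$, and satisfies the normalization \eqref{AssRho1} (indeed $\int_{\varepsilon^{1/p}}^1 u^{-1}\,du = p^{-1}|\log\varepsilon|$).

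The first step is to compute the auxiliary weight $\eta_{\varepsilon,p}$ defined in \eqref{ThmJNSharp1} for this choice of $\rho_\varepsilon$. Writing $a:=\max\{s,\varepsilon\}$ so that $a^{1/p}=\max\{s^{1/p},\varepsilon^{1/p}\}$, a direct antiderivative calculation gives
$$
\eta_{\varepsilon,p}(s)=\int_{\max\{s^{1/p},\varepsilon^{1/p}\}}^{1}\frac{p}{|\log\varepsilon|}\,u^{-p-1}\,du
=\frac{1}{|\log\varepsilon|}\bigl(\max\{s,\varepsilon\}^{-1}-1\bigr).
$$
The second step is to substitute this identity into \eqref{ThmJNSharp2}, raise both sides to the $p$-th power, and rescale with $s=t/|Q|$ to obtain exactly
$$
\fint_Q |f-f_Q|^p\le C_N^p\,\frac{p^p}{|\log\varepsilon|\,|Q|}\int_0^{|Q|}\Bigl(\max\bigl\{\tfrac{t}{|Q|},\varepsilon\bigr\}^{-1}-1\Bigr) f^{\#*}(t)^p\,dt,
$$
as required.

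There is essentially no real obstacle: the proof is a mechanical specialization of Theorem \ref{ThmJNSharp}. The only subtle bookkeeping point, and the step I would be most careful about, is the normalization of $\rho_\varepsilon$. Using the unnormalized logarithmic kernel $|\log\varepsilon|^{-1}u^{-1}{\bf 1}_{(\varepsilon,1)}(u)$ from Example~\ref{Ex1}(iii) as stated in the appendix would instead produce the weight $\frac{1}{p|\log\varepsilon|}(\max\{s,\varepsilon^p\}^{-1}-1)$; one therefore has to either rescale the parameter $\varepsilon\mapsto\varepsilon^{1/p}$ (and absorb the resulting factor of $p$) or, equivalently, use the form of $\rho_\varepsilon$ given above so that the $\varepsilon^{1/p}$ in the support cancels exactly with the $u^{-p}$ in the integrand of $\eta_{\varepsilon,p}$. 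The restriction $\varepsilon\in(0,\varepsilon_0]$ simply ensures that $|\log\varepsilon|$ is bounded away from zero and plays no further role in the argument.
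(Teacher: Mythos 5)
Your proof is correct and matches the paper's intended approach: the corollary is obtained by specializing Theorem~\ref{ThmJNSharp} to a normalized logarithmic $\rho_\varepsilon$, and you correctly identify that the kernel from Example~\ref{Ex1}(iii) must be reparametrized (supporting $\rho_\varepsilon$ on $(\varepsilon^{1/p},1)$ rather than $(\varepsilon,1)$, with the compensating prefactor $p/|\log\varepsilon|$) so that the resulting $\eta_{\varepsilon,p}$ produces $\max\{t/|Q|,\varepsilon\}^{-1}$ with coefficient $p^p/|\log\varepsilon|$ rather than $\max\{t/|Q|,\varepsilon^p\}^{-1}$ with coefficient $p^{p-1}/|\log\varepsilon|$. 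The computation of $\eta_{\varepsilon,p}$ and the final substitution into \eqref{ThmJNSharp2} are both checked and accurate.
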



\section{Limiting formulas}

\subsection{New Bourgain--Brezis--Mironescu  and Maz'ya--Shaposhnikova formulas in the classical setting}\label{SubSection6.1}
In this section, we investigate
the Bourgain--Brezis--Mironescu and Maz'ya--Shaposhnilova formulas in Sobolev spaces.


\begin{thm}\label{ThmGenSobk}
Let $X$ be a  r.i. space on $\R^N$ with absolutely continuous norm,
$p\in(0,\infty)$,
and $k\in\mathbb{N}$. Let $\{\rho_\varepsilon\}_{\varepsilon > 0}$
be a family of functions satisfying \eqref{AssRho1}. Define (cf. \eqref{gsfg})
\begin{equation*}
	\phi_\varepsilon (t) := \int_{t^k}^\infty u^{-p-\frac{N}{k}} \rho_\varepsilon(u) \, du, \qquad
\forall\, t > 0.
\end{equation*}
Then, for any  $f\in\dot{W}^kX$ and $\varepsilon\in(0,\infty)$,
\begin{equation}\label{ThmGenSobk1.2}
\int_{\R^N} \left\|\Delta_h^k f\right\|_X^p \,
 \phi_\varepsilon(|h|) \,dh
 \lesssim  \left\|\nabla^k f\right\|_X^p.
\end{equation}
If, in addition, $\{\rho_\varepsilon\}_{\varepsilon > 0}$ satisfies \eqref{AssRho} then\footnote{As usual, $x^\alpha = x_1^{\alpha_1} \cdots x_N^{\alpha_N}$ for $x = (x_1, \ldots, x_N) \in \R^N$ and $\alpha =( \alpha_1, \ldots, \alpha_N) \in \Z^N_+$.}
\begin{align}\label{ThmGenSobk1.1}
\lim\limits_{\varepsilon\to0^+}
\int_{\R^N}\left\|\Delta^k_hf\right\|_{X}^p
\phi_\varepsilon(|h|)\,dh
=\frac{1}{kp+N}\int_{\mathbb{S}^{N-1}}
\bigg\|\sum_{|\alpha|=k}
\omega^\alpha\partial^\alpha f\bigg\|_{X}^p\,d\sigma^{N-1}(\omega)
\end{align}
for any $f\in\dot{W}^kX$, and
	\begin{equation}\label{ThmGenSobk1}
		\lim_{\varepsilon \to 0^+}
\int_{\R^N} \left\|\Delta_h^k f\right\|_X^p
\, \phi_\varepsilon(|h|) \, dh  \approx
\left\|\nabla^k f\right\|_X^p
	\end{equation}
for any $f\in W^kX$.
\end{thm}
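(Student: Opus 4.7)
The proof splits along the three claims. For the uniform upper bound \eqref{ThmGenSobk1.2}, I would combine the pointwise estimate $\|\Delta^k_h f\|_X\lesssim |h|^k\|\nabla^k f\|_X$, which follows from the integral representation
\[
\Delta^k_h f(x) = \int_{[0,1]^k}(h\cdot\nabla)^k f\!\left(x+(t_1+\cdots+t_k)h\right)dt_1\cdots dt_k
\]
and translation invariance of $X$, with the Fubini computation
\[
\int_{\R^N}|h|^{kp}\phi_\varepsilon(|h|)\,dh = |\mathbb{S}^{N-1}|\int_0^\infty r^{kp+N-1}\int_{r^k}^\infty u^{-p-N/k}\rho_\varepsilon(u)\,du\,dr = \frac{|\mathbb{S}^{N-1}|}{kp+N},
\]
which is finite and independent of $\varepsilon$ thanks to $\int_0^\infty\rho_\varepsilon=1$.

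For \eqref{ThmGenSobk1.1}, the plan is to pass to polar coordinates $h=r\omega$ and rewrite
\[
I_\varepsilon := \int_{\R^N}\|\Delta^k_h f\|_X^p\,\phi_\varepsilon(|h|)\,dh = \int_{\mathbb{S}^{N-1}}\int_0^\infty\bigg(\frac{\|\Delta^k_{r\omega}f\|_X}{r^k}\bigg)^p r^{kp+N-1}\phi_\varepsilon(r)\,dr\,d\sigma^{N-1}(\omega).
\]
The integral representation above applied to $h=r\omega$ and divided by $r^k$, combined with strong continuity of translation on $X$ (this is where absolute continuity of the norm enters crucially), yields $\lim_{r\to0^+}\|r^{-k}\Delta^k_{r\omega}f-(\omega\cdot\nabla)^k f\|_X=0$ pointwise in $\omega$, together with a uniform bound $\|\Delta^k_{r\omega}f\|_X/r^k\leq C\|\nabla^k f\|_X$. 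Applying Fubini to recast the inner radial integral from the $\phi_\varepsilon$-form into the $\rho_\varepsilon$-form brings it within the scope of Lemma \ref{LemmaReal}(i), producing the pointwise-in-$\omega$ convergence
\[
\int_0^\infty\bigg(\frac{\|\Delta^k_{r\omega}f\|_X}{r^k}\bigg)^p r^{kp+N-1}\phi_\varepsilon(r)\,dr\;\longrightarrow\;\frac{1}{kp+N}\|(\omega\cdot\nabla)^k f\|_X^p;
\]
dominated convergence in $\omega$ (with the uniform bound above) then gives \eqref{ThmGenSobk1.1} upon identifying $(\omega\cdot\nabla)^k f$ with $\sum_{|\alpha|=k}\omega^\alpha\partial^\alpha f$ via the paper's multi-index convention.

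The equivalence \eqref{ThmGenSobk1} on $W^kX$ reduces through \eqref{ThmGenSobk1.1} and \eqref{ThmGenSobk1.2} to the two-sided comparison
\[
\int_{\mathbb{S}^{N-1}}\bigg\|\sum_{|\alpha|=k}\omega^\alpha\partial^\alpha f\bigg\|_X^p\,d\sigma^{N-1}(\omega)\approx \|\nabla^k f\|_X^p.
\]
The $\lesssim$ direction is a routine triangle-type estimate. The reverse inequality is the main obstacle. My plan is to exploit that, as $\omega$ ranges over $\mathbb{S}^{N-1}$, the polynomials $\xi\mapsto(\omega\cdot\xi)^k$ span the finite-dimensional space of degree-$k$ homogeneous polynomials in $N$ variables, so that for each $|\alpha|=k$ there exists a bounded kernel $K_\alpha:\mathbb{S}^{N-1}\to\R$ with $\partial^\alpha f=\int_{\mathbb{S}^{N-1}}K_\alpha(\omega)(\omega\cdot\nabla)^k f\,d\sigma^{N-1}(\omega)$. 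Minkowski's integral inequality (valid since $X$ is a Banach function space) followed by H\"older in $\omega$ then recovers $\|\partial^\alpha f\|_X$ from the right-hand side, and summing over $\alpha$ yields the desired lower bound when $p\geq 1$; the case $p<1$ requires an additional $p$-convexification step, which is the most delicate technical point.
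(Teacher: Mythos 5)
Your arguments for \eqref{ThmGenSobk1.2} and \eqref{ThmGenSobk1.1} are sound. For the second item you take a slightly different route than the paper: you apply Lemma \ref{LemmaReal}(i) pointwise in $\omega$ to the radial integral and then pass to the limit in $\omega$ by dominated convergence, whereas the paper integrates over $\omega$ first, defines a single function $g(t)$, and evaluates $\lim_{t\to0^+}g(t)/t$ via L'H\^opital; you also invoke strong continuity of translations where the paper reduces to a dense class $\{f\in C^\infty:\nabla f\in C^\infty_{\rm c}\}$. Both routes use absolute continuity of the norm in the same essential way and both are correct.

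The gap is in your treatment of \eqref{ThmGenSobk1}. The paper does not attempt to prove the sphere-to-gradient comparison at the level of $X$-norms; instead it bypasses it entirely by writing the pre-limit expression as $\int_0^\infty [K(t,f;X,\dot W^kX)/t]^p\rho_\varepsilon(t)\,dt$ (via Lemma \ref{lemmaAk} and Fubini), applies Lemma \ref{LemmaReal}(i), and then uses $\sup_t\omega_k(f,t)_X/t^k\approx\|\nabla^k f\|_X$. This works uniformly for all $p\in(0,\infty)$ and needs $f\in W^kX$ (so that the modulus is finite). Your route instead hinges on the two-sided comparison
$\int_{\mathbb{S}^{N-1}}\big\|\sum_{|\alpha|=k}\omega^\alpha\partial^\alpha f\big\|_X^p\,d\sigma^{N-1}(\omega)\approx\|\nabla^k f\|_X^p$,
and your kernel/Minkowski/H\"older derivation of the lower bound only works when $p\ge1$. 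For $p<1$, H\"older on the probability space $\mathbb{S}^{N-1}$ goes the wrong way: Minkowski gives $\|\partial^\alpha f\|_X\lesssim\int_{\mathbb{S}^{N-1}}\|\sum\omega^\beta\partial^\beta f\|_X\,d\sigma$, but on a finite measure space the $L^1(\sigma)$-norm dominates the $L^p(\sigma)$-norm for $p<1$, so this does not bound $\|\partial^\alpha f\|_X$ by $(\int\|\cdots\|_X^p\,d\sigma)^{1/p}$. Your remark that the case $p<1$ ``requires an additional $p$-convexification step'' does not rescue this: $p$-convexification of $X$ changes the target lattice, not the $\omega$-integration, and does not reverse the $L^1$-versus-$L^p$ inequality in $\omega$. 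A genuine fix is available but different in flavour: one can note that $\omega\mapsto\big\|\sum_{|\alpha|=k}\omega^\alpha\partial^\alpha f\big\|_X$ is Lipschitz on $\mathbb{S}^{N-1}$ with constant $\lesssim\|\nabla^k f\|_X$, and that a finite spanning subset of $\{(\omega^\alpha)_{|\alpha|=k}:\omega\in\mathbb{S}^{N-1}\}$ gives $\sup_\omega\|\cdots\|_X\gtrsim\|\nabla^k f\|_X$, so the function stays $\gtrsim\|\nabla^k f\|_X$ on a spherical cap of universal size; integrating $|\cdot|^p$ over that cap yields the lower bound for every $p>0$. As your plan stands, however, the $p<1$ case is not handled, and since the theorem explicitly allows $p\in(0,\infty)$ this is a real gap, not a technicality.
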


\begin{proof}
Let $f\in\dot{W}^kX$.
By \cite[Proposition 1.4.5]{g14}, for almost every $x\in\R^N$
and for any $h \in\R^N$,
\begin{align}\label{dhk0}
\Delta^k_hf(x)&=
\int_{[0,1]^k}\sum_{|\alpha|=k}
h^{\alpha}\partial^\alpha f\left(x+(s_1+\cdots +s_k)h\right)
\,ds_1\cdots\,ds_k
\end{align}
(cf. \eqref{DefDifference}),  which combined with the translation invariance
of $X$ implies that
\begin{align}\label{dhk}
\left\|\Delta^k_hf\right\|_X
\lesssim|h|^k\left\|\nabla^k f\right\|_X.
\end{align}
Using this estimate
together with the Fubini theorem and the assumption
\eqref{AssRho1} on $\rho_{\varepsilon}$, we obtain
\begin{align*}
\int_{\R^N}\left\|\Delta^k_hf\right\|_X^p
\phi_\varepsilon(|h|)\,dh & \lesssim\left\|\nabla^k f\right\|_X^p
\int_{\R^N}|h|^{kp}\phi_\varepsilon(|h|)\,dh\\
& \approx\left\|\nabla^k f\right\|_X^p
\int_{0}^{\infty}r^{kp+N-1}\phi_\varepsilon(r)\,dr\\
&=\left\|\nabla^k f\right\|_X^p
\int_{0}^{\infty}u^{-p-\frac{N}{k}}\rho_{\varepsilon}(u)\,
\int_{0}^{u^{\frac{1}{k}}}r^{kp+N-1}\,dr \, du\\
&\approx \left\|\nabla^k f\right\|_X^p
\int_{0}^{\infty}\rho_{\varepsilon}(u)\,du =\left\|\nabla^k f\right\|_X^p,
\end{align*}
which completes the proof of \eqref{ThmGenSobk1.2}.

Next, we show \eqref{ThmGenSobk1.1}.
To do this, for any $t\in(0,\infty)$,
let
\begin{equation}\label{DefG}
g(t) := \bigg(\frac{1}{t^{\frac{N}{k}}}
\, \int_{|h| \leq t^{\frac{1}{k}}}
\left\|\Delta^k_h f\right\|_X^p \, d h \bigg)^{\frac{1}{p}}.
\end{equation}
Then
\begin{equation*}
\int_{\R^N}\left\|\Delta^k_hf\right\|_X^p
\phi_\varepsilon(|h|)\,dh= \int_{0}^{\infty}\left[\frac{g(t)}
{t}\right]^p\rho_{\varepsilon}(t)\,dt.
\end{equation*}
In addition, applying \eqref{dhk}, we find that, for any $t\in(0,\infty)$,
$g(t)\lesssim t\|\nabla^kf\|_X$, that is, \eqref{LimAs1} holds.
Therefore, by Lemma \ref{LemmaReal},
to prove \eqref{ThmGenSobk1.1} it is enough to show
\begin{align}\label{ThmGenSobke2}
\lim\limits_{t\to0^+}
\left[\frac{g(t)}{t}\right]^p=
\frac{1}{kp+N}\int_{\mathbb{S}^{N-1}}
\bigg\|\sum_{|\alpha|=k}
\omega^\alpha\partial^{\alpha}f\bigg\|_X^p\,d\sigma^{N-1}(\omega).
\end{align}
We first assume $k=1$ in \eqref{ThmGenSobke2}. Note that the set
$\{f\in C^\infty(\R^N):\ \nabla f\in C_{{\rm c}}^\infty(\R^N,\R^N)\}$
is dense in $\dot{W}^1X$ (see \cite[Proposition 2.15]{DLYYZ23}). This implies that
we can further assume that
$f\in C^\infty(\R^N)$ and $\nabla f$ is a
Lipschitz function compactly supported in $\Omega$.
Then, by \eqref{dhk0}, we have,
for any $x \in \R^N$, $\omega \in \mathbb{S}^{N-1}$,
and $t$ small enough,
\begin{equation*}
\left|\frac{\Delta_{t\omega}f(x)}{t}
-\nabla f(x)\cdot\omega\right|  =\left|\int_{0}^{1}
\left[\nabla f(x+st\omega)-\nabla f(x)\right]\cdot
\omega\,ds\right|\lesssim t{\bf 1}_{2\Omega}(x).
\end{equation*}
Using this and the lattice property of $X$,
we find that
	$$
	\left| \frac{\|\Delta_{t \omega} f\|_X}{t} -
\left\|\nabla f \cdot \omega\right\|_X \right|
\leq	\bigg\| \frac{\Delta_{t \omega} f}{t}
- \nabla f \cdot \omega
\bigg\|_X \lesssim t \, \| \mathbf{1}_{2 \Omega}\|_X\to0 \qquad \text{as} \qquad  t\to0^+.
	$$
Combining this, \eqref{dhk}, the assumption
$|\nabla f|\in X$, and the Lebesgue dominated convergence theorem,
we further conclude that
\begin{align*}
&\lim\limits_{t\to0^+}
\int_{\mathbb{S}^{N-1}}\left(\frac{\|\Delta_{t\omega}f\|_X}{t}\right)^p
\,d\sigma^{N-1}(\omega)\\
&\quad=\int_{\mathbb{S}^{N-1}}\lim\limits_{t\to0^+}
\left(\frac{\|\Delta_{t\omega}f\|_X}{t}\right)^p
\,d\sigma^{N-1}(\omega)
=\int_{\mathbb{S}^{N-1}}\left\|\nabla f\cdot\omega\right\|_X^P
\,d\sigma^{N-1}(\omega).
\end{align*}
This, together with 
  the L'H\^{o}pital rule, further implies that
\begin{align}\label{ThmGenSobke3}
\lim\limits_{t\to0^+}
\left[\frac{g(t)}{t}\right]^p
&=\lim\limits_{t\to0^+}\frac{\int_{0}^{t}\int_{\mathbb{S}^{N-1}}
\|\Delta_{\lambda \omega}f\|_X^p\,d\sigma^{N-1}(\omega) \, \lambda^{N-1}\,d\lambda}
{t^{p+N}} \\
&=\frac{1}{p+N}\lim\limits_{t\to0^+}
\int_{\mathbb{S}^{N-1}}\left(\frac{\|\Delta_{t\omega}f\|}{t}\right)^p
\,d\sigma^{N-1}(\omega)\notag\\
&=\frac{1}{p+N}\int_{\mathbb{S}^{N-1}}\left\|\nabla f\cdot\omega\right\|_X^P
\,d\sigma^{N-1}(\omega).\notag
\end{align}
Thus \eqref{ThmGenSobke2} with $k=1$ follows.
For $k > 1$,
observe that, for any $h\in\R^N$,
\begin{align*}
\sum_{|\alpha|=k}
h^\alpha\partial^{\alpha}f
=\sum_{|\beta|=k-1}
\nabla\left(h^{\beta}\partial^\beta f\right)\cdot h.
\end{align*}
Then an argument similar to the above with $f$ replaced
by $\{\partial^{\beta}f\}_
{|\beta|=k-1}\subset\dot{W}^1X$ yields \eqref{ThmGenSobke2} for
any $k > 1$.

Finally, we show \eqref{ThmGenSobk1}. Indeed,
by a change of variables, Lemma \ref{lemmaAk},
and the Tonelli theorem,
we find that
\begin{align}
\int_{0}^{\infty}
\left[\frac{K(t,f;X,\dot{W}^kX)}{t}\right]^p
\rho_\varepsilon(t)\,dt
&\approx\int_{0}^{\infty}
\frac{1}{t^N}\int_{|h|\le t}
\left\|\Delta_h^kf\right\|_X^p \,
\rho_{\varepsilon}(t^k)t^{k(1-p)-1}\, dh \,dt \nonumber\\
&\approx\int_{\R^N}\left\|\Delta_h^kf\right\|_X^p
\int_{|h|^k}^{\infty}\rho_{\varepsilon}(t)
t^{-p-\frac{N}{k}}\,dt\,dh
=\int_{\R^N}\left\|\Delta_h^kf\right\|_X^p
\phi_\varepsilon(|h|)\,dh.\label{ThmGenSobke1}
\end{align}
In addition, similarly to \cite[(1.10)]{KolomoitsevTikhonov},
we have, for any $f\in X$,
\begin{align*}
\sup_{t\in(0,\infty)}\frac{\omega_k(f,t)_X}{t^k}
\approx\left\|\nabla^k f\right\|_X.
\end{align*}
From this, \eqref{ThmGenSobke1}, and Lemma \ref{LemmaReal}(i),
we deduce that, for any $f\in W^kX$,
\begin{align*}
\lim\limits_{\varepsilon\to0^+}\int_{\R^N}
\left\|\Delta^k_hf\right\|_X^p\phi_\varepsilon(|h|)\,dh
&\approx\sup_{t\in(0,\infty)}
\left[\frac{K(t,f;X,\dot{W}^kX)}{t}\right]^p\\
&\approx\sup_{t\in(0,\infty)}
\left[\frac{\omega_k(f,t)_X}{t^k}\right]^p
\approx\left\|\nabla^kf\right\|_X^p.
\end{align*}
This finishes the proof of \eqref{ThmGenSobk1} and hence Theorem \ref{ThmGenSobk}.
\end{proof}

The Maz'ya--Shaposhnikova counterpart of Theorem \ref{ThmGenSobk} reads as follows.

\begin{thm}\label{ThmGenSobMSk}
Let $X$ be a  r.i. space on $\R^N$ with absolutely continuous norm, $p \in(0,\infty)$,
and $k\in\mathbb{N}$.
Let $\{\psi_\varepsilon\}_{\varepsilon > 0}$
be a family of functions satisfying
\eqref{AssPsi1} and \eqref{AssPsi}.
Define
\begin{equation}\label{612new}
	\varphi_{\varepsilon} (t) := \int_{t^k}^\infty u^{-\frac{N}{k}}
\psi_\varepsilon(u) \, du, \qquad \forall\,t > 0.
\end{equation}
Then, for any $f \in X$,
\begin{equation}\label{ThmGenSobMSk1}
\lim\limits_{\varepsilon\to0^+}
\int_{\R^N}\left\|\Delta^k_h f\right\|_X^p
\varphi_{\varepsilon}(|h|)\,dh
=\frac{|\mathbb{S}^{N-1}|}{N}
\lim\limits_{|h|\to\infty}
\left\|\Delta^k_hf\right\|_X^p,
\end{equation}
where $\lim_{|h|\to\infty}
\|\Delta^k_hf\|_X$ exists with $\lim_{|h|\to\infty}
\|\Delta^k_hf\|_X \approx \|f\|_X$.
\end{thm}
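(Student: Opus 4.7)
The strategy mirrors the proof of Theorem \ref{ThmGenSobk}, with Lemma \ref{LemmaReal}(ii) replacing Lemma \ref{LemmaReal}(i) and the asymptotic behavior at $|h|\to\infty$ replacing the one at $|h|\to 0^+$. First, by Fubini's theorem and a change of variables (exactly as in the computation \eqref{ThmGenSobke1}), I would rewrite
\begin{align*}
\int_{\R^N}\|\Delta^k_h f\|_X^p\,\varphi_\varepsilon(|h|)\,dh
&=\int_0^\infty u^{-N/k}\psi_\varepsilon(u)\int_{|h|\le u^{1/k}}\|\Delta^k_h f\|_X^p\,dh\,du\\
&=\int_0^\infty H(u)\,\psi_\varepsilon(u)\,du,
\end{align*}
where $H(u):=u^{-N/k}\int_{|h|\le u^{1/k}}\|\Delta^k_h f\|_X^p\,dh$. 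In polar coordinates, the change of variable $r=u^{1/k}s$ yields
\begin{equation*}
H(u)=\int_0^1 s^{N-1}\int_{\mathbb{S}^{N-1}}\|\Delta^k_{u^{1/k}s\,\omega}f\|_X^p\,d\sigma^{N-1}(\omega)\,ds.
\end{equation*}

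Next, I would verify the hypotheses of Lemma \ref{LemmaReal}(ii) applied to $h(u)=H(u)^{1/p}$. The uniform bound $\|\Delta^k_h f\|_X\le 2^k\|f\|_X$, which holds in any r.i. space, immediately yields $\sup_{u>0}H(u)<\infty$. The crux is then the existence of $L:=\lim_{|h|\to\infty}\|\Delta^k_h f\|_X$; once this is established (along with directional independence), the bounded integrand in the spherical-coordinate formula above permits the Lebesgue dominated convergence theorem, giving
\begin{equation*}
\lim_{u\to\infty}H(u)=L^p\int_0^1 s^{N-1}|\mathbb{S}^{N-1}|\,ds=\frac{|\mathbb{S}^{N-1}|}{N}L^p.
\end{equation*}
Lemma \ref{LemmaReal}(ii) then delivers the desired identity \eqref{ThmGenSobMSk1}.

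The main obstacle is establishing the existence of $L=\lim_{|h|\to\infty}\|\Delta^k_h f\|_X$ with $L\approx\|f\|_X$. I would proceed by density: since $X$ has absolutely continuous norm, compactly supported functions are dense in $X$, and the operator $f\mapsto\|\Delta^k_h f\|_X$ is $1$-Lipschitz in $f$ uniformly in $h$, so it suffices to treat $f$ with $\operatorname{supp} f\subset B(0,R)$. For $|h|>2R/k$, the translates $f(\cdot+jh)$, $j=0,1,\ldots,k$, have pairwise disjoint supports, hence
\begin{equation*}
|\Delta^k_h f|=\sum_{j=0}^k\binom{k}{j}|f(\cdot+jh)|\,\mathbf{1}_{E_j(h)},
\end{equation*}
where $E_j(h)=-jh+\operatorname{supp}f$ are pairwise disjoint. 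By translation invariance and the disjointness, the decreasing rearrangement $(|\Delta^k_h f|)^*$ coincides with the rearrangement of $\sum_{j=0}^k\binom{k}{j}|f|$ distributed over $k+1$ disjoint copies of $\operatorname{supp} f$; this is a single function $\Phi\in X$ \emph{independent of $h$} once $|h|$ is large enough, so $\|\Delta^k_h f\|_X=\|\Phi\|_X=:L$ for all such $h$, and in particular the limit exists, is independent of direction, and satisfies the two-sided bound $\|f\|_X\le L\le (2^k-1)\|f\|_X$ (the lower bound comes from the $j=k$ summand and translation invariance, the upper one from the triangle inequality).

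Finally, the approximation argument transfers this existence to general $f\in X$: if $f_n\to f$ in $X$ with $f_n$ compactly supported, then $L(f_n)\to L(f)$ by the above Lipschitz bound, showing that $\{L(f_n)\}$ is Cauchy in $\R$, that $\|\Delta^k_h f\|_X$ converges as $|h|\to\infty$ for every $f\in X$, and that the quantitative equivalence $L\approx\|f\|_X$ is preserved in the limit. This completes all the ingredients needed to invoke Lemma \ref{LemmaReal}(ii) and conclude \eqref{ThmGenSobMSk1}.
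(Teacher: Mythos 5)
Your proof follows essentially the same route as the paper: disjointness of the supports of the translates $f(\cdot+jh)$ shows that $\|\Delta^k_h f\|_X$ is eventually constant in $h$ for compactly supported $f$ (the paper phrases this via the distribution function and Ryff's theorem, you via the rearrangement of a fixed model function $\Phi$ --- these are the same idea), density then yields existence of $\lim_{|h|\to\infty}\|\Delta^k_h f\|_X$ for all $f\in X$, and Lemma \ref{LemmaReal}(ii) together with the Fubini/polar-coordinate computation of $H(u)$ concludes. A few harmless numerical slips: the disjointness threshold is $|h|>2R$, not $|h|>2R/k$ (the tightest pair is $|j-j'|=1$); the triangle-inequality bound is $L\le 2^k\|f\|_X$, not $(2^k-1)\|f\|_X$; and $f\mapsto\|\Delta^k_h f\|_X$ is $2^k$-Lipschitz, not $1$-Lipschitz --- none of these affect the validity of the argument.
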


\begin{proof}
Let $f\in X$, we first show that
$\lim_{|h|\to\infty}\|\Delta^k_hf\|_X$
exists. To do this, we now assume that $f$ has compact support, i.e.,
$\supp (f)\subset B(0,R)$ for some $R\in(0,\infty)$.
Then, for any $j\in\mathbb{N}$ and $h\in\R^N$,
$\supp(f(\cdot+jh))\subset B(-jh,R)$. Thus $\{\supp(f(\cdot+jh))\}_{j=0}^k$
are pairwise disjoint provided that $|h|>2R$. This further implies that, for any
$h\in\R^N$ with $|h|>2R$ and\footnote{As usual, $d_f$ denotes the distribution function of $f$.} $\lambda\in(0,\infty)$,
\begin{align}
d_{\Delta^k_hf}(\lambda)
&=\left|\left\{x\in\R^N:\ \left|
\sum_{j=0}^{k}(-1)^{k-j}\binom{k}{j}
|f(x+jh)|\right|>\lambda\right\}\right| \nonumber\\
&=\sum_{j=0}^{k}\left|\left\{x\in B(-jh,R):\
\binom{k}{j}|f(x+jh)|>\lambda\right\}\right| =\sum_{j=0}^{k}d_f\left(\binom{k}{j}^{-1}\lambda\right). \label{614}
\end{align}
Thus, for any $h\in\R^N$ with $|h|>2R$ and for any $t\in(0,\infty)$,
$$
	(\Delta^k_h f)^*(t)  = \inf
 \left\{\lambda \in(0,\infty) : \sum_{j=0}^{k}d_f\left(\binom{k}{j}^{-1}
 \lambda\right)\leq t \right\} =: \tilde{g} (t).
$$
In particular, $\tilde{g}$ is a decreasing and right-continuous function. Applying Ryff's theorem \cite[Corollary 7.8, p. 86]{BennettSharpley}, there exists a measurable
function $b$ on $\R^N$ such that $b^* = \tilde{g}$, and so
$$
\|b\|_X= \left\|\Delta^k_h f\right\|_X
$$
for any $h\in\R^N$ with $|h|>2R$.
To prove that $\lim_{|h| \to \infty} \|\Delta^k_h f\|_X$ exists and finite,
it remains to show that $b \in X$. Indeed, let $j_0\in\{0,\ldots,k\}$
be such that $\binom{k}{j_0}=\max_{j\in\{0,\ldots,k\}}\binom{k}{j}$.
Then, for any $t\in(0,\infty)$,
monotonicity properties of $d_f$ yield
\begin{align*}
&\left\{\lambda\in(0,\infty):\
(k+1)d_{f}\left(\binom{k}{j_0}^{-1}\lambda\right)\le t\right\}\\
&\quad\subset\left\{\lambda \in(0,\infty) : \sum_{j=0}^{k}d_f\left(\binom{k}{j}^{-1}
 \lambda\right)\leq t \right\}
 \subset\left\{\lambda\in(0,\infty):\ d_f(\lambda)\le t\right\}.
\end{align*}
Taking the infimum over all $\lambda\in(0,\infty)$, we obtain
$$
	f^*(t) \leq  \tilde{g}(t) = b^*(t) \leq
\binom{k}{j_0}f^*\bigg(\frac{t}{k+1}\bigg),
$$
which implies that
$$
 \|b\|_X \approx \|f\|_X.
$$
Thus, for $f$ compactly supported,
$\lim_{|h|\to\infty}\|\Delta^k_hf\|_X$ exists and
$\lim_{|h|\to\infty}\|\Delta^k_hf\|_X\approx\|f\|_X$. For general $f \in X$, the same result follows from standard density arguments.

%

Next, we prove \eqref{ThmGenSobMSk}. Changing the order of integration, we arrive at
\begin{equation*}
\int_{\R^N}
\left\|\Delta_h^k f\right\|_X^p
\varphi_{\varepsilon}(|h|)\, dh = \int_{0}^{\infty}\left[g(t)\right]^p\psi_{\varepsilon}
(t)\,dt,
\end{equation*}
where $g$ is given by \eqref{DefG}.
Since $\|\Delta^k_hf\|_X\le2^k\|f\|_X$, we have $g(t)\lesssim\|f\|_X$. In order to prove the first formula of \eqref{ThmGenSobMSk1}, by Lemma \ref{LemmaReal}(ii), it is enough to show
\begin{equation}\label{ThmGenSobMSke1}
\lim\limits_{t\to\infty}
\left[g(t)\right]^p
=\frac{|\mathbb{S}^{N-1}|}{N}\lim\limits_{|h|\to\infty}
\left\|\Delta^k_hf\right\|_X^p.
\end{equation}
The latter follows similarly as in  \eqref{ThmGenSobke3}.

Regarding the equivalence in \eqref{ThmGenSobMSk1},
similarly to \eqref{ThmGenSobke1},
we have, for any $f\in X$,
\begin{align*}
\int_{0}^{\infty}\left[K\left(t,f;X,\dot{W}^kX\right)\right]^p
\psi_\varepsilon(t)\,dt &\approx\int_{0}^{\infty}\frac{1}{t^N}
\int_{|h|\le t}\left\|\Delta_h^kf\right\|_{X}^p \,
\psi_\varepsilon(t^k)t^{k-1}\, dh \, dt\notag\\
&\approx \int_{\R^N}
\left\|\Delta_h^k f\right\|_X^p \,
\varphi_{\varepsilon}(|h|)\, dh.\notag
\end{align*}
Applying this, Lemma \ref{LemmaReal}(ii),
\eqref{lemmaAke0}, and \cite[Property 1(d)]{KolomoitsevTikhonov},
we then conclude that, for any $f\in X$,
\begin{equation*}
\lim\limits_{\varepsilon\to0^+}
\int_{\R^N}\left\|\Delta^k_h f\right\|_X^p
\varphi_{\varepsilon}(|h|)\,dh \approx\lim\limits_{t\to\infty} K\left(t,f;X,\dot{W}^kX\right)^p
\approx\lim\limits_{t\in(0,\infty)}
\omega_k(f,t)^p_X\approx\|f\|_X^p.
\end{equation*}
This completes  the proof of \eqref{ThmGenSobMSk1}
and hence Theorem \ref{ThmGenSobMSk}.
\end{proof}

For special   choices of $X$ (in particular, the \emph{Lorentz space} 
  $X = L^{p, q}(\R^N)$), the exact value of
$\lim_{|h|\to\infty}\|\Delta^k_h f\|_X$ in \eqref{ThmGenSobMSk1} can be easily computed.
Recall that $\|f\|_{L^{p, q}(\R^N)} := \big(\int_0^\infty (t^{1/p} f^*(t))^q \, \frac{dt}{t} \big)^{1/q}$ and, in particular, $\|f\|_{L^{p, p}(\R^N)} = \|f\|_{L^p(\R^N)}.$

\begin{exam}\label{3.31}
Let $p, q \in [1,\infty)$ and $k\in\mathbb{N}$. Then,
for any $f\in L^{p, q}(\R^N)$,
\begin{equation}\label{LimLorentz}
\lim\limits_{|h|\to\infty}
\left\|\Delta^k_hf\right\|^q_{L^{p, q}(\R^N)} = p \,  \int_0^\infty t^q \bigg[\sum_{j=0}^k d_f \bigg(\binom{k}{j}^{-1} t \bigg) \bigg]^{\frac{q}{p}} \, \frac{dt}{t}.
\end{equation}
In particular, if $p=q$ then
\begin{equation}\label{618}
\lim\limits_{|h|\to\infty}
\left\|\Delta^k_hf\right\|^p_{L^{p}(\R^N)}
= \sum_{j=0}^{k}
\binom{k}{j}^p
\|f\|_{L^p(\R^N)}^p
\end{equation}
and if $k=1$ then
\begin{equation}\label{LpExDif}
	\lim_{|h| \to \infty} \left\|\Delta_hf\right\|_{L^{p, q}(\R^N)} = 2^{1/p} \, \|f\|_{L^{p, q}(\R^N)}.
\end{equation}
The proof of \eqref{LimLorentz} is an immediate consequence of the well-known fact that
$$
	\|f\|_{L^{p, q}(\R^N)}^q = p  \int_0^\infty t^q d_f(t)^{q/p} \, \frac{dt}{t}
$$
and \eqref{614}.
\end{exam}
%
%
%
%
%

\subsection{Examples}\label{Section6.2}

Next we apply Theorems \ref{ThmGenSobk} and \ref{ThmGenSobMSk} for some special choices of families $\{\rho_\varepsilon\}_{\varepsilon > 0}$ and $\{\psi_\varepsilon\}_{\varepsilon > 0}$. Throughout this subsection, unless otherwise is stated, we assume that $X$ is a r.i. space on $\R^N$ with absolutely continuous norm.

\begin{cor}\label{CorMSX}
	Let $p \in(0,\infty)$
and $k\in\mathbb{N}$. Then, for any $f \in W^k X$,
	\begin{align}\label{CorMSXs1}
		\lim_{\varepsilon \to 1^-} (1-\varepsilon)^{\frac{1}{p}} \,
\bigg(\int_{\R^N} \frac{\|\Delta_h^k f\|_X^p}{|h|^{k\varepsilon p + N}} \, dh  \bigg)^{\frac{1}{p}} =\left[\frac{1}{kp}
\int_{\mathbb{S}^{N-1}}
\left\|\sum_{|\alpha|=k}\omega^\alpha
\partial^\alpha f\right\|_X^p
\,d\sigma^{N-1}(\omega)\right]^{\frac{1}{p}}
 \approx \left\|\nabla^k f\right\|_X
	\end{align}
	and
		\begin{equation}\label{CorMSXs0}
		\lim_{\varepsilon \to 0^+} \varepsilon^{\frac{1}{p}} \, \bigg(\int_{\R^N}
 \frac{\|\Delta_h^k f\|_X^p}{|h|^{k\varepsilon p + N}} \, dh  \bigg)^{\frac{1}{p}}
 =\left(\frac{|\mathbb{S}^{N-1}|}{kp}\right)^{\frac{1}{p}}
 \lim_{|h|\to\infty}\left\|\Delta^k_hf\right\|_X\approx\|f\|_X.
	\end{equation}
\end{cor}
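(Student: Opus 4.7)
The plan is to derive both formulas in Corollary \ref{CorMSX} by specializing Theorems \ref{ThmGenSobk} and \ref{ThmGenSobMSk} to concrete one-parameter weight families, then stripping off harmless remainders.

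For \eqref{CorMSXs1} I will apply Theorem \ref{ThmGenSobk} to
\begin{equation*}
\rho_{\tilde\varepsilon}(u) := p\tilde\varepsilon\, u^{\tilde\varepsilon p - 1}\mathbf{1}_{(0,1)}(u),
\end{equation*}
a normalized variant of Example \ref{Ex1}(i); one checks $\int_0^\infty\rho_{\tilde\varepsilon}=1$ and $\int_0^a\rho_{\tilde\varepsilon}=a^{\tilde\varepsilon p}\to 1$ as $\tilde\varepsilon\to 0^+$ for every $a\in(0,1)$, so both \eqref{AssRho1} and \eqref{AssRho} hold. A direct integration in the definition of $\phi_{\tilde\varepsilon}$ yields
\begin{equation*}
\phi_{\tilde\varepsilon}(t)=\frac{p\tilde\varepsilon}{p(1-\tilde\varepsilon)+N/k}\bigl(t^{-kp(1-\tilde\varepsilon)-N}-1\bigr)\mathbf{1}_{(0,1)}(t).
\end{equation*}
After substituting this into \eqref{ThmGenSobk1.1}, setting $\varepsilon:=1-\tilde\varepsilon$, and absorbing the remainders $\tilde\varepsilon\int_{|h|\le 1}\|\Delta_h^k f\|_X^p\,dh=o(1)$ (finite integral times a vanishing prefactor) and $\tilde\varepsilon\int_{|h|>1}\|\Delta_h^k f\|_X^p |h|^{-kp\varepsilon-N}\,dh=o(1)$ (which uses $\|\Delta_h^k f\|_X\le 2^k\|f\|_X$ and integrability at infinity when $\varepsilon$ is close to $1$), I will arrive at
\begin{equation*}
\lim_{\varepsilon\to 1^-}(1-\varepsilon)\int_{\R^N}\frac{\|\Delta_h^k f\|_X^p}{|h|^{k\varepsilon p+N}}\,dh=\frac{1}{kp}\int_{\mathbb{S}^{N-1}}\bigg\|\sum_{|\alpha|=k}\omega^\alpha\partial^\alpha f\bigg\|_X^p\,d\sigma^{N-1}(\omega).
\end{equation*}
The $p$-th root is the identity in \eqref{CorMSXs1}. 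The equivalence $\approx\|\nabla^k f\|_X$ then follows from the pointwise upper bound $\|\sum_{|\alpha|=k}\omega^\alpha\partial^\alpha f\|_X\le\|\nabla^k f\|_X$ together with a lower bound obtained by evaluating at a finite collection of directions whose $k$-th powers span the space of degree-$k$ monomials.

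For \eqref{CorMSXs0} I will apply Theorem \ref{ThmGenSobMSk} with
\begin{equation*}
\psi_\varepsilon(u):=\varepsilon p\,u^{-\varepsilon p-1}\mathbf{1}_{(1,\infty)}(u),
\end{equation*}
which satisfies \eqref{AssPsi1} and \eqref{AssPsi}. A direct computation of \eqref{612new} gives
\begin{equation*}
\varphi_\varepsilon(t)=\frac{\varepsilon p}{N/k+\varepsilon p}\,[\max(t,1)]^{-N-k\varepsilon p},
\end{equation*}
so that $\varphi_\varepsilon(|h|)$ coincides with a constant multiple of $|h|^{-N-k\varepsilon p}$ precisely on $\{|h|>1\}$ and is merely constant on $\{|h|\le 1\}$. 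Applying \eqref{ThmGenSobMSk1} and dropping the $\{|h|\le 1\}$-contribution (which is $O(\varepsilon)$ since $\int_{|h|\le 1}\|\Delta_h^k f\|_X^p\,dh\le 2^{kp}|B_1|\|f\|_X^p$), I obtain
\begin{equation*}
\lim_{\varepsilon\to 0^+}\varepsilon\int_{|h|>1}\frac{\|\Delta_h^k f\|_X^p}{|h|^{N+k\varepsilon p}}\,dh=\frac{|\mathbb{S}^{N-1}|}{kp}\lim_{|h|\to\infty}\|\Delta_h^k f\|_X^p.
\end{equation*}
To extend to the full integral I still need $\varepsilon\int_{|h|\le 1}\|\Delta_h^k f\|_X^p|h|^{-N-k\varepsilon p}\,dh\to 0$: since $f\in W^kX$, the bound $\|\Delta_h^k f\|_X\lesssim |h|^k\|\nabla^k f\|_X$ (cf. \eqref{dhk}) produces the majorant $c\|\nabla^k f\|_X^p/(kp(1-\varepsilon))$, uniformly bounded for $\varepsilon\in(0,1/2)$. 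Taking $p$-th roots and invoking $\lim_{|h|\to\infty}\|\Delta_h^k f\|_X\approx\|f\|_X$ from Theorem \ref{ThmGenSobMSk} closes \eqref{CorMSXs0}.

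The one genuinely nontrivial step is the bookkeeping of the remainders created by the indicator truncation in $\rho_{\tilde\varepsilon}$ and $\psi_\varepsilon$: the truncation forces $\phi_{\tilde\varepsilon}$ and $\varphi_\varepsilon$ to agree with pure powers only on $(0,1)$ and $(1,\infty)$, respectively, and the complementary regions produce the extra $-1$ and constant summands displayed above. Both are neutralized by the universal bound $\|\Delta_h^k f\|_X\le\min\{|h|^k\|\nabla^k f\|_X,\,2^k\|f\|_X\}$, so no new estimate is needed beyond the two limiting formulas already proved.
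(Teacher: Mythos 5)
Your proof is correct and takes essentially the same route as the paper's: for \eqref{CorMSXs0} you choose $\psi_\varepsilon(t)=\varepsilon p\,t^{-\varepsilon p-1}\mathbf 1_{(1,\infty)}(t)$, compute the same $\varphi_\varepsilon$, and dispose of the two remainders via the bound $\|\Delta_h^kf\|_X\lesssim\min\{|h|^k\|\nabla^kf\|_X,\,\|f\|_X\}$, which is exactly the paper's argument; for \eqref{CorMSXs1} you use the paper's suggested $\rho_\varepsilon$ and carry out the analogous bookkeeping that the paper leaves to the reader, and your computation of $\phi_{\tilde\varepsilon}$ and the two $o(1)$ remainders is right. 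The one place where you depart from the paper is the justification of $\approx\|\nabla^kf\|_X$ in \eqref{CorMSXs1}: the paper obtains this by combining \eqref{ThmGenSobk1.1} with the second limit \eqref{ThmGenSobk1} of Theorem \ref{ThmGenSobk} (both valid for $f\in W^kX$), whereas you propose a direct argument via finitely many directions $\omega_1,\dots,\omega_M$ whose $k$-th power vectors span $\R^{\mathcal K}$. That route does work, but as stated it only yields $\|\nabla^kf\|_X\lesssim\max_i\bigl\|\sum_{|\alpha|=k}\omega_i^\alpha\partial^\alpha f\bigr\|_X$; to pass from a maximum over a finite set to the $L^p(\mathbb S^{N-1})$-average on the left of \eqref{CorMSXs1} you still need a short localization step, e.g. that $\omega\mapsto\|\sum_\alpha\omega^\alpha\partial^\alpha f\|_X$ is Lipschitz with constant $\lesssim\|\nabla^kf\|_X$, so it remains comparable to its value at the maximizing $\omega_i$ on a cap of fixed radius. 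Either fill that in, or more simply cite \eqref{ThmGenSobk1}.
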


\begin{rem}
	Letting $X = L^p(\R^N), \, p \in [1, \infty)$, in the previous result, we recover the higher order analogues of the classical Bourgain--Brezis--Mironescu and Maz'ya--Shaposhnikova formulas
	\begin{equation}\label{623}
		\lim_{\varepsilon \to k^-} (k-\varepsilon)  \int_{\R^N} \int_{\R^N} \frac{|\Delta_h^k f(x)|^p}{|h|^{\varepsilon p + N}} \, dx\, dh =\frac{1}{p}  \int_{\R^N}
\int_{\mathbb{S}^{N-1}}
\bigg|\sum_{|\alpha|=k}\omega^\alpha
\partial^\alpha f (x)\bigg|^p
\,d\sigma^{N-1}(\omega) \, dx
	\end{equation}
	and (cf. \eqref{618})
	\begin{equation}\label{625sunew}
			\lim_{\varepsilon \to 0^+} \varepsilon  \int_{\R^N} \int_{\R^N} \frac{|\Delta_h^k f(x)|^p}{|h|^{\varepsilon p + N}} \, dx\, dh =\frac{|\mathbb{S}^{N-1}|}{p}   \sum_{j=0}^{k}
\binom{k}{j}^p
\|f\|_{L^p(\R^N)}^p.
	\end{equation}
	Note that  formula \eqref{623}, as well as certain variants of both \eqref{623} and \eqref{625sunew}, have been already derived in \cite[p. 332]{KaradzhovMilmanXiao}. Concerning \eqref{623}, we also refer to \cite{Borghol, BIK, Ferreira}.
\end{rem}

\begin{rem}\label{Remark627}
For the Lorentz space $X = L^{p, q}(\R^N), \, p \in (1, \infty), \, q \in [1, \infty)$ (or $p=q=1$) and $k=1$,  Corollary \ref{CorMSX} gives\footnote{For a function $f = f(x, h)$, the notation $\|f\|_{X_x}$ and $\|f\|_{X_h}$ indicates that  the $X$-norm is computed with respect to the variable $x$ and $h$, respectively.}
		\begin{equation}\label{Lorentz1}
		\lim_{\varepsilon \to 1^-}  \big\|\|Q_\varepsilon\|_{L^{p, q}_x(\R^N)} \big\|_{L^p_h(\R^N)} \approx \|\nabla f\|_{L^{p, q}(\R^N)}
	\end{equation}
	and
	$$
		\lim_{\varepsilon \to 0^+}  \big\|\|Q_\varepsilon\|_{L^{p, q}_x(\R^N)} \big\|_{L^p_h(\R^N)} \approx \|f\|_{L^{p, q}(\R^N)},
	$$
where
	$$
		Q_\varepsilon (x, h) = (\varepsilon (1-\varepsilon))^{\frac{1}{p}} \, \frac{f (x+h)-f(x)}{|h|^{\varepsilon + \frac{N}{p}}}.
	$$
We also recall an analogue of  \eqref{Lorentz1} from the recent paper \cite[Theorem 5.23]{DGPYYZ}
	\begin{equation*}
			\lim_{\varepsilon \to 1^-}  \big\|\|Q_\varepsilon\|_{L^p_h(\R^N) } \big\|_{L^{p, q}_x(\R^N)} \approx \|\nabla f\|_{L^{p, q}(\R^N)}.
	\end{equation*}
	We observe that despite switching the norms under the limit,  the outcome is still the same in both cases.
\end{rem}

\begin{proof}[Proof of Corollary \ref{CorMSX}]

We show \eqref{CorMSXs0}.
To do so,
let $\psi_\varepsilon(t)=\varepsilon p t^{-\varepsilon p-1}
{\bf 1}_{(1,\infty)}(t)$ (cf. Example \ref{Ex1Dual}(i) in Appendix \ref{SectionAA}). Then,
by a simple calculation, we find that
\begin{align*}
\varphi_{\varepsilon}(t)=\frac{\varepsilon p}
{\varepsilon p+\frac{N}{k}}
\left\{{\bf 1}_{(0,1)}(t)+t^{-k\varepsilon p-N}
{\bf 1}_{[1,\infty)}(t)\right\}.
\end{align*}
Therefore, from \eqref{ThmGenSobMSk1},
we infer that, for any $f\in X$,
\begin{equation}\label{CorMSXe1}
\lim\limits_{\varepsilon\to 0^+}
\frac{k\varepsilon p}
{k\varepsilon p+N}\left(
\int_{|h|<1}\left\|\Delta^k_hf\right\|_X^p\,dh
+\int_{|h|\ge 1}\frac{\|\Delta^k_hf\|_X^p}{|h|^{k\varepsilon p+N}}
\,dh\right)=\frac{|\mathbb{S}^{N-1}|}{N}\lim_{|h|\to\infty}
\left\|\Delta^k_hf\right\|_X^p.
\end{equation}
Further, we observe that the first limit on the left-hand side vanishes because $\left\|\Delta^k_hf\right\|_X\lesssim\|f\|_X$, and
$$
	\lim\limits_{\varepsilon\to 0^+}
\frac{k\varepsilon p}
{k\varepsilon p+N} \int_{|h|\le 1}\frac{\|\Delta^k_hf\|_X^p}{|h|^{k\varepsilon p+N}}
\,dh =0 \qquad \text{for any} \qquad f \in \dot{W}^k X,
$$
because $\left\|\Delta^k_hf\right\|_X\lesssim
|h|^k\left\|\nabla^kf\right\|_X$. Then \eqref{CorMSXs0} is shown.

The proof of \eqref{CorMSXs1} follows similar ideas as above by now invoking Theorem \ref{ThmGenSobk} with $\rho_\varepsilon(t)=\varepsilon pt^{\varepsilon p-1}
{\bf 1}_{(0,1)}(t)$ (cf. Example \ref{Ex1}(i), Appendix \ref{SectionAA}). Further details are left to the reader.
\end{proof}

Applying Theorems \ref{ThmGenSobk} and \ref{ThmGenSobMSk}  for the special choices  $\rho_\varepsilon(t) = \frac{\alpha}{\varepsilon} (\frac{t}{\varepsilon})^{\alpha-1} \mathbf{1}_{(0, \varepsilon)}(t), \, \alpha > 0$, and $\psi_{\varepsilon}(t)=
\frac{\alpha}{\varepsilon^\alpha t^{\alpha+1}}{\bf 1}
_{(\varepsilon^{-1},\infty)}(t)$ (see Examples \ref{Ex1}(ii) with \eqref{27new} and \ref{Ex1Dual}(ii)), we arrive at the following result.

\begin{cor}\label{CorMSX2}
	Let $p\in(0,\infty)$ and $k\in\mathbb{N}$.
	\begin{enumerate}[\upshape(i)]
	\item If $\alpha \in (0, p + \frac{N}{k})$ then
	\begin{equation}\label{628new}
		\lim_{\varepsilon \to 0^+}
\frac{1}{\varepsilon^{\frac{k \alpha}{p}}} \,
\left(\int_{|h| < \varepsilon}
\frac{\|\Delta_h^k f\|_X^p}{|h|^{k p + N - k \alpha}}
\, dh  \right)^{\frac{1}{p}} \approx \left\|\nabla^k f\right\|_X,
	\end{equation}
	 if $\alpha = p + \frac{N}{k}$ then
	\begin{equation}\label{628new2}
	\lim_{\varepsilon \to 0^+} \, \frac{1}{\varepsilon^{k+\frac{N}{p}}}
\, \left[
\int_{|h| < \varepsilon} \left\|\Delta_h^k f
\right\|_X^p \,
\log \left(\frac{\varepsilon}{|h|} \right)
 \, dh \right]^{\frac{1}{p}} \approx \left\|\nabla^k f\right\|_X,
\end{equation}
and if $\alpha \in (p + \frac{N}{k}, \infty)$ then
\begin{equation}\label{628new3}
	\lim_{\varepsilon \to 0^+} \, \frac{1}{\varepsilon^{k+\frac{N}{p}}}
\, \left[
\int_{|h| < \varepsilon} \left\|\Delta_h^k f
\right\|_X^p
 \, dh \right]^{\frac{1}{p}} \approx \left\|\nabla^k f\right\|_X.
\end{equation}
	\item If $\alpha > 0$ then
	\begin{equation}\label{629new}
		\lim_{\varepsilon \to \infty} \varepsilon^{\frac{k \alpha}{p}} \,
\left(\int_{\R^N} \frac{\|\Delta_h^k f\|_X^p}
{\max\{|h|, \varepsilon\}^{N + \alpha k}}
\, dh \right)^{\frac{1}{p}} \approx \|f\|_X.
	\end{equation}
	\end{enumerate}
\end{cor}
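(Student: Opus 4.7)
The strategy is to obtain both parts of the corollary by a direct application of Theorems \ref{ThmGenSobk} and \ref{ThmGenSobMSk} to the explicit families indicated just before the statement, after rescaling the parameter so that the support of the resulting $\phi_\varepsilon$ (resp.\ $\varphi_\varepsilon$) aligns with the domain of integration appearing in \eqref{628new}--\eqref{629new}. Concretely, for part (i) I will take $\rho_\varepsilon(t)=\alpha\varepsilon^{-k\alpha}t^{\alpha-1}\mathbf{1}_{(0,\varepsilon^k)}(t)$, and for part (ii) I will take $\psi_\varepsilon(t)=\alpha\varepsilon^{k\alpha}t^{-\alpha-1}\mathbf{1}_{(\varepsilon^k,\infty)}(t)$ together with the reparametrization $\varepsilon\leftrightarrow\varepsilon^{-1}$ that matches the corollary's regime $\varepsilon\to\infty$ to the theorem's $\varepsilon\to 0^+$; these are rescalings of the families from Examples \ref{Ex1}(ii) and \ref{Ex1Dual}(ii), and they satisfy \eqref{AssRho1}/\eqref{AssRho} and \eqref{AssPsi1}/\eqref{AssPsi} respectively.

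Computing $\phi_\varepsilon$ via the antiderivative of $u^{\alpha-p-N/k-1}$ on $[|h|^k,\varepsilon^k]$ splits the analysis of part (i) into three regimes: $\alpha<p+N/k$ produces a difference of a BBM-type weight $|h|^{-(kp+N-k\alpha)}$ and a constant, $\alpha=p+N/k$ produces a logarithm $\log(\varepsilon/|h|)$, and $\alpha>p+N/k$ produces a difference of a constant and a polynomial weight in $|h|$. In all three regimes, $\phi_\varepsilon$ is supported in $|h|<\varepsilon$. The critical case \eqref{628new2} is then read off at once by inserting the logarithmic $\phi_\varepsilon$ into \eqref{ThmGenSobk1} and taking $p$-th roots. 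The supercritical case \eqref{628new3}, whose right-hand side does not involve $\alpha$, follows directly from the intermediate identity \eqref{ThmGenSobke2} of the proof of Theorem \ref{ThmGenSobk} upon the substitution $t=\varepsilon^k$, which yields $\varepsilon^{-kp-N}\int_{|h|<\varepsilon}\|\Delta_h^k f\|_X^p\,dh\to c\|\nabla^k f\|_X^p$ as $\varepsilon\to 0^+$.

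The subcritical case is the only nontrivial point. Here Theorem \ref{ThmGenSobk} delivers that
\begin{equation*}
A(\varepsilon)-B(\varepsilon) \;=\; \mathrm{const}\cdot\int_{\R^N}\|\Delta_h^k f\|_X^p\,\phi_\varepsilon(|h|)\,dh \;\longrightarrow\; c\,\|\nabla^k f\|_X^p
\end{equation*}
as $\varepsilon\to 0^+$, where $A(\varepsilon):=\varepsilon^{-k\alpha}\int_{|h|<\varepsilon}\|\Delta_h^k f\|_X^p |h|^{-(kp+N-k\alpha)}\,dh$ and $B(\varepsilon):=\varepsilon^{-kp-N}\int_{|h|<\varepsilon}\|\Delta_h^k f\|_X^p\,dh$. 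Since $B(\varepsilon)$ by itself converges to $c'\|\nabla^k f\|_X^p$ by the supercritical statement already established, writing $A=(A-B)+B$ forces $A(\varepsilon)$ to converge to a positive multiple of $\|\nabla^k f\|_X^p$, which after taking $p$-th roots yields \eqref{628new}. Both integrands are nonnegative, so no issues arise when separating the two contributions.

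For part (ii), the tail integral in \eqref{612new} evaluates cleanly to
\begin{equation*}
\varphi_\varepsilon(t) \;=\; \frac{\alpha}{\alpha+N/k}\,\varepsilon^{k\alpha}\max\{t,\varepsilon\}^{-N-\alpha k},\qquad t>0,
\end{equation*}
after which the $\varepsilon^{k\alpha}$ factor can be pulled out of the integral in \eqref{ThmGenSobMSk1} to produce exactly the left-hand side of \eqref{629new} (raised to the $p$-th power); Theorem \ref{ThmGenSobMSk} then supplies convergence to $\frac{|\mathbb{S}^{N-1}|}{N}\lim_{|h|\to\infty}\|\Delta_h^k f\|_X^p\approx\|f\|_X^p$. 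The main obstacles throughout are simply the careful bookkeeping of the scaling between the theorem's parameter and that of the corollary, and the subtraction trick in the subcritical case; no analytic input beyond Theorems \ref{ThmGenSobk}, \ref{ThmGenSobMSk}, and the basic estimate \eqref{dhk} is required.
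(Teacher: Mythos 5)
Your proposal is correct and follows essentially the same route as the paper: apply Theorems \ref{ThmGenSobk} and \ref{ThmGenSobMSk} to the $\alpha$-power families from Examples \ref{Ex1}(ii) and \ref{Ex1Dual}(ii) (appropriately rescaled so that $\phi_\varepsilon$ and $\varphi_\varepsilon$ are supported on $(0,\varepsilon)$ and $\max\{t,\varepsilon\}$ appears, respectively), compute the explicit weights, and handle the subcritical case by splitting off the remainder term. Your decomposition $A=(A-B)+B$ with $B$ handled via \eqref{ThmGenSobke2} is a slightly cleaner way of packaging the paper's pointwise weight inequality $|h|^{k\alpha-kp-N}\lesssim |h|^{k\alpha-kp-N}-\varepsilon^{\alpha-p-N/k}$, and in fact it pins down that the limit genuinely exists rather than merely bounding the $\liminf$/$\limsup$.
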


\begin{proof}
	To show \eqref{628new}, we further take into account that
	$
		|h|^{-k p - N + k \alpha} \lesssim |h|^{-k p - N + k \alpha}- \varepsilon^{-p-\frac{N}{k} +\alpha}
	$
	for $|h| < (\frac{\varepsilon}{2})^{\frac{1}{k}}$. Similar estimates are applied to get \eqref{628new2} and \eqref{628new3}.
\end{proof}

Let $\{\rho_\varepsilon\}_{\varepsilon\in(0,1)}$
and $\{\psi_{\varepsilon}\}_{\varepsilon\in(0,1)}$
be the same as, respectively, in
Examples \ref{Ex1}(iii) and \ref{Ex1Dual}(iii). Applying Theorems \ref{ThmGenSobk} and \ref{ThmGenSobMSk}, we obtain the following borderline result.

\begin{cor}\label{Coro633}
Let $p\in(0,\infty)$
and $k\in\mathbb{N}$. Then
\begin{equation*}
	\lim_{\varepsilon \to 0^+} \, \frac{1}{|\log \varepsilon|^{\frac{1}{p}}}
\, \left(\int_{|h|\ge \varepsilon}
\frac{\|\Delta_h^k f\|_X^p}{|h|^{kp+N}} \, dh \right)
^{\frac{1}{p}} \approx \left\|\nabla^k f\right\|_X
\end{equation*}
and
$$
	\lim_{\varepsilon \to \infty} \,
\frac{1}{|\log \varepsilon|^{\frac{1}{p}}} \,
\left(\int_{ |h| < \varepsilon} \frac{\|\Delta_h^k f\|_X^p}{|h|^N} \,
dh \right)^{\frac{1}{p}} \approx \|f\|_X.
$$
\end{cor}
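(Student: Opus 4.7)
The plan is to derive both limits by specializing Theorem \ref{ThmGenSobk} and Theorem \ref{ThmGenSobMSk} to the borderline logarithmic families of Example \ref{Ex1}(iii) and Example \ref{Ex1Dual}(iii), respectively; each reduces to a short computation of the associated weights $\phi_\varepsilon$ or $\varphi_\varepsilon$, followed by absorbing several error terms that vanish as $\varepsilon \to 0^+$ (resp.\ $\varepsilon \to \infty$).

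For the first limit, I would take $\rho_\varepsilon(u) = |\log \varepsilon|^{-1} u^{-1} \mathbf{1}_{(\varepsilon,1)}(u)$. An explicit integration yields
\begin{equation*}
\phi_\varepsilon(t) = \frac{k}{(kp+N)|\log \varepsilon|}\left[\max\{t^k,\varepsilon\}^{-p-N/k}-1\right]\mathbf{1}_{(0,1)}(t).
\end{equation*}
I would then split $\int_{\R^N}\|\Delta^k_hf\|_X^p\phi_\varepsilon(|h|)\,dh$ across $\{|h|\le \varepsilon^{1/k}\}$ and $\{\varepsilon^{1/k}<|h|\le 1\}$. Using $\|\Delta^k_hf\|_X\lesssim|h|^k\|\nabla^k f\|_X$, the first piece contributes $O(|\log\varepsilon|^{-1}\|\nabla^k f\|_X^p)$; using $\|\Delta^k_hf\|_X\lesssim\|f\|_X$, the additive $-1$ term on the second piece, together with the tail $\int_{|h|>1}|h|^{-kp-N}\|\Delta^k_hf\|_X^p\,dh$, is $O(|\log\varepsilon|^{-1}\|f\|_X^p)$. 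By Theorem \ref{ThmGenSobk}\eqref{ThmGenSobk1}, the remaining term equals $\|\nabla^k f\|_X^p$ up to equivalence. Finally, the substitution $\varepsilon\mapsto\varepsilon^k$ (which merely rescales $|\log\varepsilon|$ by the constant $k$ and is absorbed into $\approx$) converts the lower limit $\varepsilon^{1/k}$ into $\varepsilon$, matching the stated formula.

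For the second limit, the recipe is symmetric: take $\psi_\varepsilon(u)=|\log\varepsilon|^{-1}u^{-1}\mathbf{1}_{(1,\varepsilon^{-1})}(u)$ and compute from \eqref{612new}
\begin{equation*}
\varphi_\varepsilon(t)=\frac{k}{N|\log\varepsilon|}\left[\max\{t^k,1\}^{-N/k}-\varepsilon^{N/k}\right]\mathbf{1}_{(0,\varepsilon^{-1/k})}(t).
\end{equation*}
Splitting $\int_{\R^N}\|\Delta^k_hf\|_X^p\varphi_\varepsilon(|h|)\,dh$ across $\{|h|\le 1\}$ and $\{1<|h|\le\varepsilon^{-1/k}\}$, the bounded piece $\int_{|h|\le 1}\|\Delta^k_hf\|_X^p\,dh$ and the subtracted $\varepsilon^{N/k}$ term each contribute $O(|\log\varepsilon|^{-1}\|f\|_X^p)$ after multiplication by the prefactor; Theorem \ref{ThmGenSobMSk} identifies the leading piece with $\|f\|_X^p$. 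A final relabeling $\varepsilon\mapsto\varepsilon^{-k}$ converts $\varepsilon\to 0^+$ with upper cut-off $\varepsilon^{-1/k}$ into $\varepsilon\to\infty$ with upper cut-off $\varepsilon$, delivering the stated form.

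The only delicate point is confirming that the subtracted constants in $\phi_\varepsilon$ and $\varphi_\varepsilon$ (the $-1$ and $-\varepsilon^{N/k}$ terms) produce only $o(1)$ contributions relative to the normalized integrals; this is routine thanks to the two-sided estimate $\|\Delta^k_hf\|_X\lesssim\min\{|h|^k\|\nabla^k f\|_X,\|f\|_X\}$ and the logarithmic prefactor $|\log\varepsilon|^{-1}$, so no genuine new ingredient beyond Theorems \ref{ThmGenSobk} and \ref{ThmGenSobMSk} is required.
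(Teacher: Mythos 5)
Your proposal follows the same route as the paper, which simply announces that the corollary is obtained by applying Theorems~\ref{ThmGenSobk} and~\ref{ThmGenSobMSk} to the borderline families of Examples~\ref{Ex1}(iii) and~\ref{Ex1Dual}(iii); you fill in the computation of $\phi_\varepsilon$, $\varphi_\varepsilon$, the error estimates, and the change of variable $\varepsilon\mapsto\varepsilon^{\pm k}$, and the weight formulas you record are correct. The first limit is fully argued.

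One detail in the second limit deserves attention. After the relabeling, your derivation produces
\begin{equation*}
\lim_{\varepsilon\to\infty}\frac{1}{N|\log\varepsilon|}\int_{1<|h|<\varepsilon}\frac{\|\Delta^k_hf\|_X^p}{|h|^N}\,dh\approx\|f\|_X^p,
\end{equation*}
with the inner cutoff at $|h|=1$, because $\varphi_\varepsilon$ is a constant (not $\approx|h|^{-N}$) on $\{|h|\le1\}$. To pass from $\int_{1<|h|<\varepsilon}$ to the stated $\int_{|h|<\varepsilon}$, one must absorb the additional term $\int_{|h|\le1}\|\Delta^k_hf\|_X^p\,|h|^{-N}\,dh$, which is an $O(|\log\varepsilon|^{-1})$ correction \emph{only} when it is finite. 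For $f\in X$ alone (the generality in which Theorem~\ref{ThmGenSobMSk} is stated), this integral can diverge, since $\|\Delta^k_hf\|_X\to0$ as $|h|\to0$ may hold without any rate in $|h|$. Under $f\in W^kX$, where $\|\Delta^k_hf\|_X\lesssim|h|^k\|\nabla^kf\|_X$, the integral is finite and the step is legitimate. You should state this finiteness condition (or restrict to $f\in W^kX$) before the final relabeling, rather than asserting that relabeling alone "delivers the stated form"; as written, the last sentence of your proof quietly identifies two integrals that differ by a quantity you have not controlled.
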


\subsection{Further Bourgain--Brezis--Mironescu and Maz'ya--Shaposhnikova formulas}\label{Section63}

The next result is   in some sense intermediate between the recent BBM-type characterizations of Sobolev norms given in \cite{DGPYYZ} and \cite[Proposition 2.5]{KolyadaLerner}. Note that our assumption on Minkowski's inequality (see \eqref{Minkow} below) plays the role of the boundedness of the maximal operator on the dual space of $X^{1/p}$, the \emph{$1/p$-convexification}\footnote{Recall that $\|f\|_{X^{1/p}} := \| \, |f|^{1/p} \, \|_X^p$.} of $X$, in \cite{DGPYYZ}.

\begin{theorem}\label{ThmGenSobk'}
Let $p\in(0,\infty)$, $k\in\mathbb{N}$, and let
$X$ be a r.i. space on $\R^N$ with absolutely continuous norm and
satisfying the following Minkowski's integral inequality
	\begin{equation}\label{Minkow}
		\| \,\|f(x, h)\|_{L^p_x} \, \|_{X_h} \lesssim \|\,\|f(x, h)\|_{X_h} \, \|_{L^p_x}
	\end{equation}
	for any measurable function $f$ on $\R^{2N}$. Assume that $\{\rho_{\varepsilon}\}_{\varepsilon > 0}$
is a family of functions satisfying \eqref{AssRho1} and \eqref{AssRho}.
Then, for any $f\in \dot{W}^kX$,
\begin{equation}\label{Equiv1}
\lim\limits_{\varepsilon\to0^+}
\int_{0}^{\infty}\frac{1}{t^{\frac{N}{k}+p}}
\left\|\left[
\int_{|h|\le t^{\frac{1}{k}}}\left|\Delta^k_hf(\cdot)\right|^p\,dh
\right]^{\frac{1}{p}}\right\|_{X}^p\rho_{\varepsilon}(t)\,dt =\frac{1}{kp + N}
\left\|\left[\int_{\mathbb{S}^{N-1}}
\left|\sum_{
|\alpha|=k}\omega^\alpha\partial^{\alpha}
f(\cdot)\right|^p\,d\sigma^{N-1}(\omega)
\right]^{\frac{1}{p}}\right\|_X^p.
\end{equation}
In particular,
\begin{equation}\label{Equiv2}
	\lim\limits_{\varepsilon\to0^+}
\int_{0}^{\infty}\frac{1}{t^{\frac{N}{k}+p}}
\left\|\left[
\int_{|h|\le t^{\frac{1}{k}}}\left|\Delta^k_hf(\cdot)\right|^p\,dh
\right]^{\frac{1}{p}}\right\|_{X}^p\rho_{\varepsilon}(t)\,dt  \approx \|\nabla^k f\|_X^p.
\end{equation}
\end{theorem}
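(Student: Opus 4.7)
The plan is to apply Lemma~\ref{LemmaReal}(i) to the function
$$g(t) := \frac{1}{t^{N/(kp)}} \left\| \left[ \int_{|h| \leq t^{1/k}} |\Delta_h^k f(\cdot)|^p\,dh \right]^{1/p} \right\|_X, \qquad t > 0,$$
so that the left-hand side of \eqref{Equiv1} becomes $\int_0^\infty (g(t)/t)^p \rho_\varepsilon(t)\,dt$. After the change of variable $h = t^{1/k}\eta$, we have $g(t)/t = \|F_t\|_X$, where
$$F_t(x) := \left[\int_{|\eta|\leq 1} \left| \frac{\Delta_{t^{1/k}\eta}^k f(x)}{t}\right|^p d\eta\right]^{1/p}.$$
It therefore suffices to verify the uniform bound $\sup_{t>0} \|F_t\|_X < \infty$ and to compute $\lim_{t\to 0^+} \|F_t\|_X$; Lemma~\ref{LemmaReal}(i) will then identify the $p$-th power of this limit as the desired left-hand side of \eqref{Equiv1}.

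For the uniform bound I would apply the hypothesis \eqref{Minkow} with inner $L^p$-norm in $\eta$ and outer $X$-norm in $x$, combined with the known estimate \eqref{dhk}, to obtain
$$\|F_t\|_X \lesssim \left[\int_{|\eta|\leq 1} \left\| \frac{\Delta_{t^{1/k}\eta}^k f}{t}\right\|_X^p d\eta\right]^{1/p} \lesssim \|\nabla^k f\|_X \left[\int_{|\eta|\leq 1}|\eta|^{kp} d\eta\right]^{1/p} \lesssim \|\nabla^k f\|_X.$$
The same estimate, applied with $f-f_n$ in place of $f$, will furnish the density reduction needed below.

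For the pointwise-in-$t$ limit I would first take $f$ smooth with $\nabla^k f$ bounded and compactly supported (such functions are dense in $\dot{W}^k X$, as in the proof of Theorem~\ref{ThmGenSobk}). Then \eqref{dhk0} yields
$$\frac{\Delta_{t^{1/k}\eta}^k f(x)}{t} = \int_{[0,1]^k} \sum_{|\alpha|=k} \eta^\alpha \partial^\alpha f\bigl(x + (s_1+\cdots+s_k) t^{1/k}\eta\bigr)\,ds_1\cdots ds_k,$$
which for each fixed $\eta$ tends pointwise to $\sum_{|\alpha|=k}\eta^\alpha\partial^\alpha f(x)$, while the Marchaud bound $|\Delta_{t^{1/k}\eta}^kf(x)/t| \leq C|\eta|^k \|\nabla^k f\|_\infty$ confines $F_t$ to a fixed compact set uniformly in $t \in (0,1]$. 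Hence $F_t$ is dominated by a bounded, compactly supported function lying in $X$, and the absolute continuity of the $X$-norm yields $\|F_t - F_0\|_X \to 0$ with
$$F_0(x) := \left[\int_{|\eta|\leq 1}\left|\sum_{|\alpha|=k}\eta^\alpha\partial^\alpha f(x)\right|^p d\eta\right]^{1/p}.$$
A standard $3\varepsilon$-argument, controlled via the uniform estimate of the previous paragraph, extends the convergence $\|F_t\|_X \to \|F_0\|_X$ to every $f \in \dot{W}^k X$. Finally, switching to polar coordinates $\eta = r\omega$ in the definition of $F_0$ and integrating in $r$ against $r^{kp+N-1}$ identifies
$$\|F_0\|_X^p = \frac{1}{kp+N}\left\|\left[\int_{\mathbb{S}^{N-1}} \left|\sum_{|\alpha|=k}\omega^\alpha\partial^\alpha f\right|^p d\sigma^{N-1}(\omega)\right]^{1/p}\right\|_X^p,$$
which is exactly the right-hand side of \eqref{Equiv1}. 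The consequence \eqref{Equiv2} then follows from the pointwise-in-$x$ equivalence of norms on the finite-dimensional vector space of $k$-th order tensors, pulled through the $X$-norm.

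The main obstacle is the passage to the limit inside $\|\cdot\|_X$: for a general rearrangement invariant space no maximal-function type domination is available, so one cannot argue directly. The hypothesis \eqref{Minkow} is precisely what permits the reduction to a bounded-convergence argument on smooth $f$ with $\nabla^k f$ of compact support, which in turn is handled by the absolute continuity of the $X$-norm. The rest amounts to a polar-coordinate computation and an invocation of the self-improving tool Lemma~\ref{LemmaReal}(i).
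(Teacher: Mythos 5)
Your proposal is correct and follows essentially the same route as the paper: apply Lemma~\ref{LemmaReal}(i) to $g(t)$, use the Minkowski hypothesis~\eqref{Minkow} together with~\eqref{dhk} to verify the uniform bound~\eqref{LimAs1}, prove the pointwise limit first for smooth $f$ with $\nabla^k f\in C_{\mathrm{c}}^\infty$ via~\eqref{dhk0} and the absolute continuity of the $X$-norm, and close with a density argument and polar coordinates. The only cosmetic differences are that you treat general $k$ directly (the paper reduces to $k=1$) and you organize the passage to the limit via dominated convergence applied to $F_t\to F_0$, whereas the paper uses a $(1+\theta)$-Young splitting $|f(x+h)-f(x)|^p\le(1+\theta)|h\cdot\nabla f(x)|^p + C_\theta|h|^{2p}\mathbf{1}_{2\Omega}(x)$ before passing to $X$-norms; both devices accomplish the same thing and both ultimately rest on the absolute continuity of $X$.
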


\begin{proof}
Let 	$$
		g(t): = \left\|  \bigg(\frac{1}{t^{\frac{N}{k}}} \,
\int_{|h| \leq t^{\frac{1}{k}}} \left|\Delta_ h f (\cdot)\right|^p \,
 dh \bigg)^{\frac{1}{p}} \right\|_X, \qquad t > 0.
	$$
	We focus on $k=1$, since the general case $k \in \mathbb{N}$ can be reduced to $k=1$ via some technical modifications as  already used in the proof of Theorem \ref{ThmGenSobk}.
	
	It follows from \eqref{Minkow} that
	$$
		g(t) \lesssim \bigg( \frac{1}{t^N} \int_{|h| \leq t}
\left\|\Delta_h f\right\|_X^p \, dh \bigg)^{\frac{1}{p}},
	$$
	which leads to $g(t) \lesssim t  \, \|\nabla f\|_X.$
	Hence \eqref{LimAs1} holds.
	
	We claim that
	\begin{equation}\label{ClaimNewV}
		\lim_{t \to 0^+} \left[\frac{g(t)}{t}
\right]^p = \frac{1}{p+N} \,
\left\|\left[\int_{\mathbb{S}^{N-1}}\left|\omega\cdot
\nabla f(\cdot)\right|^p\,d\sigma^{N-1}(\omega)
\right]^{\frac{1}{p}}\right\|_X^p = \frac{C_{N,p}}{p+N} \, \left\|\nabla f\right\|_X^p,
	\end{equation}
where $C_{N, p}$ is given by \eqref{CBBM}. Assuming this momentarily, one can invoke Lemma \ref{LemmaReal} to obtain
		\begin{equation}\label{BBMMixed}
		\lim_{\varepsilon \to 0^+} \int_0^\infty \frac{1}{t^{N +p}} \,  \bigg\| \int_{|h| \leq t} |\Delta_ h f (\cdot)|^p \, dh  \bigg\|_{X^{1/p}} \rho_\varepsilon(t) \, dt  = \frac{C_{N, p}}{p+N} \, \|\nabla f\|^p_X,
	\end{equation}
completing the proof of the formula \eqref{Equiv1}. 
	
Next, we show \eqref{ClaimNewV}.
Assume first that $f\in C^\infty(\R^N)$,
$\nabla f$ is compactly supported in $\Omega$,
and $\nabla f$ is Lipschitz with a constant $A$. Then
	$$
		|f(x+h)-f(x)-h \cdot \nabla f (x)| \leq A \, |h|^2 \, \mathbf{1}_{2 \Omega}(x)
	$$
	for $h$ sufficiently small (depending on $f$).
	In particular, given any $\theta >0$ there exists $C_\theta > 0$ such that
	$$
		|f(x+h)-f(x)|^p \leq (1+\theta) \, |h \cdot \nabla f(x)|^p + C_\theta A^p |h|^{2 p}  \mathbf{1}_{2 \Omega}(x).
	$$
	For $t$ sufficiently small, using polar coordinates, one has
	\begin{align*}
		  \int_{|h| \leq t} |\Delta_ h f (x)|^p \, dh &\leq (1+\theta) \, \int_{|h| \leq t}  |h \cdot \nabla f(x)|^p  \, dh + C_\theta A^p \int_{|h| \leq t} |h|^{2 p} \, dh \, \mathbf{1}_{2 \Omega}(x) \\
		  & = C_{N, p} (1+\theta) \frac{t^{N+p}}{N+p} \, |\nabla f(x)|^p  + C_\theta A^p \frac{t^{2 p + N}}{2 p + N} \, \mathbf{1}_{2 \Omega}(x),
	\end{align*}
	and thus
	$$
		\bigg\|\bigg(\frac{1}{t^{N+p}} \int_{|h| \leq t} |\Delta_ h f (x)|^p \, dh \bigg)^{\frac{1}{p}} \bigg\|_X \leq \bigg\|\bigg(\frac{C_{N, p}}{N+p} (1+\theta)  \, |\nabla f(x)|^p +   C_\theta A^p \frac{t^{p}}{2 p + N} \, \mathbf{1}_{2 \Omega}(x) \bigg)^{\frac{1}{p}} \bigg\|_X.
	$$
	Taking limits as $t \to 0^+$ on both sides of the previous inequality and applying dominated convergence theorem (we  assume that $X$ has absolutely continuous norm, see \cite[Proposition 3.6, p. 16]{BennettSharpley}), we obtain
	$$
		\limsup_{t \to 0^+} \, \bigg\|\bigg(\frac{1}{t^{N+p}} \int_{|h| \leq t} |\Delta_ h f (x)|^p \, d h \bigg)^{\frac{1}{p}} \bigg\|_X \leq  \bigg(\frac{C_{N, p}}{N+p} \bigg)^{\frac{1}{p}} (1+\theta)^{\frac{1}{p}}  \, \|\nabla f\|_X.
	$$
	Since this holds for arbitrary $\theta > 0$, we conclude that
	$$
		\limsup_{t \to 0^+} \, \bigg\|\bigg(\frac{1}{t^{N+p}} \int_{|h| \leq t} |\Delta_ h f (x)|^p \, d h \bigg)^{\frac{1}{p}} \bigg\|_X \leq  \bigg(\frac{C_{N, p}}{N+p} \bigg)^{\frac{1}{p}}  \, \|\nabla f\|_X.
	$$
	The converse inequality, i.e.,
	$$
		\liminf_{t \to 0^+} \, \bigg\|\bigg(\frac{1}{t^{N+p}} \int_{|h| \leq t} |\Delta_ h f (x)|^p \, d h \bigg)^{\frac{1}{p}} \bigg\|_X \geq  \bigg(\frac{C_{N, p}}{N+p} \bigg)^{\frac{1}{p}}  \, \|\nabla f\|_X
	$$
	can be obtained in a similar fashion. Thus \eqref{ClaimNewV} is shown for any
$f\in C^\infty(\R^N)$ with $\nabla f\in C^\infty_{{\rm c}}(\R^N,\R^N)$. The general case $f\in \dot{W}^kX$ follows from standard density arguments.

{To show \eqref{Equiv2}, since \eqref{Equiv1} was already established,  it is enough to see that
\begin{equation}\label{Equiv3}
\int_{\mathbb{S}^{N-1}}
\left|\sum_{
|\alpha|=k}\omega^\alpha\partial^{\alpha}
f(\cdot)\right|^p\,d\sigma^{N-1}(\omega) \approx |\nabla^k f(x)|^p.
\end{equation}
Indeed, for any   $v = (v_\alpha)_{|\alpha| = k} \in \R^{\mathcal{K}}$, where $\mathcal{K}$ is the cardinality of the set $\{\alpha \in \Z_+^N : |\alpha| = k\}$, define
$$
	E(v) := \bigg(\int_{\mathbb{S}^{N-1}} |W \cdot v|^p \, d \sigma^{N-1}(\omega) \bigg)^{\frac{1}{p}},
$$
where $W := (\omega^\alpha)_{|\alpha| = k}$. Note that $E$ defines a norm $\R^{\mathcal{K}}$. In particular, $E(v) \approx |v|$, the Euclidean norm on $\R^{\mathcal{K}}$. This shows the desired equivalence \eqref{Equiv3} by taking $v = (\partial^\alpha f(x))_{|\alpha| = k}$. }
\end{proof}

\begin{remark}
Some examples of spaces $X$ satisfying \eqref{Minkow} are $L^q, \, q \in [p, \infty],$ and the Lorentz spaces $L^{q, r}$ provided that $q \in (p, \infty)$ and  $r \in [p, \infty]$. In particular, letting $X=L^p, \, p \in [1, \infty)$, in Theorem \ref{ThmGenSobk'} and applying Fubini's theorem, we arrive at
	\begin{equation}\label{BBMMixed2}
		\lim_{\varepsilon \to 0^+} \int_{\R^N} \int_{\R^N} |f(x)-f(y)|^p \, \phi_\varepsilon(|x-y|) \, dx \, dy = \frac{C_{N, p}}{N+p} \, \int_{\R^N} |\nabla f (x)|^p \, dx.
	\end{equation}
Comparing this statement with the classical BBM formula \eqref{BBM}, the additional constant $(N+p)^{-1}$ arising in \eqref{BBMMixed2} is compensated by the change from the standard weight $\frac{\rho_\varepsilon}{|x-y|^p}$ in \eqref{BBM} to $\phi_\varepsilon$.
\end{remark}


The next result refers to Maz'ya--Shaposhnikova-type characterizations of \emph{ball Banach function spaces}, see \cite[Definition 2.1]{DGPYYZ} for the precise definition. Examples of these spaces include Morrey spaces, mixed Lebesgue spaces, weighted Lebesgue spaces, Orlicz spaces,  and Lorentz spaces.

\begin{theorem}\label{ThmGenSobMSk'}
Let $p\in(0,\infty)$, $k\in\mathbb{N}$, and let
$X$ be a ball Banach function space on $\R^N$. Assume that
$\{\psi_{\varepsilon}\}_{\varepsilon\in(0,\infty)}$ is
a family of functions satisfying \eqref{AssPsi1}
and \eqref{AssPsi}.  Assume that the Hardy--Littlewood maximal operator $\mathcal{M}$
  is bounded on $X^{\frac{1}{p}}$. Then, for any $f \in L^\infty_{\rm c}(\R^N)$,
  \begin{equation}\label{ThmGenSobMSk'e4}
  \lim_{\varepsilon\to0^+}
\int_{0}^{\infty}\left\|
\left[\frac{1}{t^{\frac{N}{k}}}
\int_{|h|\le t^{\frac{1}{k}}}
\left|\Delta^k_hf(\cdot)\right|^p\,dh\right]^{\frac{1}{p}}
\right\|_{X}^p\psi_{\varepsilon}(t)\,dt
=\frac{\left|\mathbb{S}^{N-1}\right|}{N}  \, \|f\|_X^p.
\end{equation}
If, in addition, $X$ has absolutely continuous norm then \eqref{ThmGenSobMSk'e4} holds for all $f \in X$.

\end{theorem}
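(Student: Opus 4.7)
The plan is to follow the blueprint of the companion Theorems~\ref{ThmGenSobMSk} and~\ref{ThmGenSobk'}. Setting
$$H(t) := \bigg\|\bigg[\frac{1}{t^{N/k}} \int_{|h| \le t^{1/k}} |\Delta^k_h f(\cdot)|^p \, dh\bigg]^{1/p}\bigg\|_X, \qquad t > 0,$$
the integral on the left-hand side of~\eqref{ThmGenSobMSk'e4} equals $\int_0^\infty H(t)^p \psi_\varepsilon(t)\,dt$, so by Lemma~\ref{LemmaReal}(ii) the identity reduces to verifying that $\sup_{t>0} H(t) < \infty$ together with
$$\lim_{t \to \infty} H(t)^p = \frac{|\mathbb{S}^{N-1}|}{N}\, \|f\|_X^p.$$

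For the uniform bound, I would exploit the convex inequality $|\Delta^k_h f(x)|^p \lesssim \sum_{j=0}^k |f(x+jh)|^p$. After the changes of variables $y = x + jh$ for $j \ge 1$ (combined, for $j=0$, with the trivial Lebesgue-differentiation bound $|f(x)|^p \le \mathcal{M}(|f|^p)(x)$), this gives uniformly in $t > 0$ the pointwise estimate
$$\frac{1}{t^{N/k}}\int_{|h|\le t^{1/k}} |\Delta^k_h f(x)|^p\,dh \lesssim \mathcal{M}(|f|^p)(x).$$
Passing to the $X^{1/p}$-norm and using the boundedness of $\mathcal{M}$ on $X^{1/p}$ then yields $H(t)^p \lesssim \|\mathcal{M}(|f|^p)\|_{X^{1/p}} \lesssim \|f\|_X^p$ for all $t>0$.

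For the pointwise limit I would adapt the disjoint-support computation from the proof of Theorem~\ref{ThmGenSobMSk}. Pick $R > 0$ with $\supp f \subset B(0,R)$. Whenever $|h| > 2R$, the translates $\{B(-jh, R)\}_{j=0}^k$ are pairwise disjoint, so $|\Delta^k_h f(x)|^p = \sum_{j=0}^k \binom{k}{j}^p |f(x+jh)|^p$ pointwise. Splitting $\int_{|h|\le t^{1/k}}$ at $|h| = 2R$, the contribution from $\{|h|\le 2R\}$ is uniformly bounded in $x$ and vanishes after division by $t^{N/k}$; on the complementary region, the $j=0$ term produces $|f(x)|^p(|B(0,t^{1/k})|-|B(0,2R)|)/t^{N/k} \to |f(x)|^p |\mathbb{S}^{N-1}|/N$, whereas each $j\ge 1$ term, after $y=x+jh$, is bounded by $j^{-N}\binom{k}{j}^p \|f\|_{L^p}^p$ and therefore vanishes after division by $t^{N/k}$. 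This gives the a.e.\ pointwise convergence
$$F_t(x) := \frac{1}{t^{N/k}}\int_{|h|\le t^{1/k}} |\Delta^k_h f(x)|^p\,dh \ \longrightarrow\ \frac{|\mathbb{S}^{N-1}|}{N}|f(x)|^p \qquad \text{as } t \to \infty.$$

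The delicate and main step is to upgrade this pointwise convergence to convergence of $\|F_t\|_{X^{1/p}}$. The plan is to split $F_t = F_t\mathbf{1}_{B(0,M)} + F_t\mathbf{1}_{\R^N\setminus B(0,M)}$ for a parameter $M$ that is eventually sent to $\infty$. The tail is controlled uniformly in $t$ in $X^{1/p}$ by the dominant $C\,\mathcal{M}(|f|^p)\,\mathbf{1}_{\R^N\setminus B(0,M)}$; using the decay $\mathcal{M}(|f|^p)(x) \lesssim \|f\|_\infty^p R^N |x|^{-N}$ for $|x| > 2R$ (which holds since $f \in L^\infty_{\rm c}$) together with the ball Banach function space structure of $X^{1/p}$, this tail can be made arbitrarily small by choosing $M$ large. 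On the bounded set $B(0,M)$, a standard dominated convergence argument with the dominant $C\mathcal{M}(|f|^p)$ passes the limit $t \to \infty$ inside the norm. The principal obstacle is precisely this interchange of limit and $X^{1/p}$-norm in the absence of global absolute continuity of the norm; the compactness of $\supp f$ is what makes the tail control and localization possible, and that is why the extension to general $f \in X$ requires the extra hypothesis of absolute continuity of the norm, under which dominated convergence applies directly, in the spirit of the closing lines of the proof of Theorem~\ref{ThmGenSobk'}.
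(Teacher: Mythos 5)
Your overall plan mirrors the paper's—both reduce to Lemma~\ref{LemmaReal}(ii), both exploit the disjointness of $\{\supp f(\cdot+jh)\}_{j=0}^k$ for $|h|>2R$, and both use the boundedness of $\mathcal{M}$ on $X^{1/p}$ for the uniform bound—but your final two steps contain a genuine gap.

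The decisive problem is the sentence ``a standard dominated convergence argument with the dominant $C\mathcal{M}(|f|^p)$ passes the limit $t\to\infty$ inside the norm.'' Dominated convergence for a.e.\ convergence is exactly what \emph{fails} in a ball Banach function space without absolutely continuous norm; for instance $X^{1/p}=L^\infty$ satisfies all the hypotheses, yet $g_n\to g$ a.e.\ with $|g_n|\le G\in L^\infty$ does not give $\|g_n-g\|_{L^\infty}\to0$. So as written, the argument proves the $L^\infty_{\rm c}$ case only under the extra hypothesis that should only be needed for general $f\in X$. The irony is that your own error estimates already give you the stronger fact you need: the contribution of $\{|h|\le 2R\}$ is $O(t^{-N/k}\|f\|_\infty^p R^N)$, each $j\ge1$ term is $O(t^{-N/k}\|f\|_{L^p}^p)$, and the $j=0$ error is $O(t^{-N/k}\|f\|_\infty^p R^N)$, all \emph{uniformly in $x$}. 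Hence $F_t\to\frac{|\mathbb{S}^{N-1}|}{N}|f|^p$ uniformly, and on $B(0,M)$ one can simply estimate $\|(F_t-\frac{|\mathbb{S}^{N-1}|}{N}|f|^p)\mathbf{1}_{B(0,M)}\|_{X^{1/p}}\le \|F_t-\frac{|\mathbb{S}^{N-1}|}{N}|f|^p\|_{L^\infty}\,\|\mathbf{1}_{B(0,M)}\|_{X^{1/p}}\to0$; no dominated convergence theorem is needed. Replace the DCT appeal by this uniform-convergence observation and the localized step becomes correct.

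The tail estimate also needs to be made concrete: ``the ball Banach function space structure'' does not by itself give $\|\,|x|^{-N}\mathbf{1}_{\{|x|>M\}}\|_{X^{1/p}}\to0$. What does give it is the self-improvement of the maximal-operator hypothesis: by \cite[Lemma 2.15(ii)]{shyy17} there is $\eta>1$ with $\mathcal{M}^{(\eta)}$ bounded on $X^{1/p}$, and since $|x|^{-N}\mathbf{1}_{\{|x|>M\}}\lesssim M^{-N(1-1/\eta)}\,\mathcal{M}^{(\eta)}(\mathbf{1}_{B(0,1)})$, the tail norm is $O(M^{-N(1-1/\eta)})$. This is exactly the mechanism the paper uses to kill the analogous quantities $I_j(t)$. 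Note also that, since $F_t\mathbf{1}_{B(0,M)}$ and $F_t\mathbf{1}_{B(0,M)^c}$ have disjoint supports, the split in $X^{1/p}$ should be carried out at the level of $\|\cdot\|_{X^{1/p}}^{1/p}$ (which is subadditive across disjoint supports), so as not to lose a constant in the quasi-norm.

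With these two fixes your argument becomes a correct and genuinely different route from the paper's: you decompose in the \emph{spatial} variable $x$ (compact core versus tail) and prove $\|F_t-\frac{|\mathbb{S}^{N-1}|}{N}|f|^p\|_{X^{1/p}}\to0$, whereas the paper decomposes in the \emph{difference} variable $h$ ($|h|\le 2R$ versus $2R<|h|\le t^{1/k}$, and then into the $j$-translates) and shows the remainder norms $I_0(t),I_j(t)$ vanish, finishing by a $\limsup/\liminf$ squeeze. The paper's route has the practical advantage of never introducing the a.e.\ limit function at all, which is why it sidesteps the dominated-convergence issue automatically; your route is arguably more transparent once the uniform convergence is recorded.
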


\begin{rem}
	The assumption that $\mathcal{M}$ acts boundedly on $X^{\frac{1}{p}}$ is in fact necessary in Theorem \ref{ThmGenSobMSk'}. Indeed,  suppose that \eqref{ThmGenSobMSk'e4} holds with $X = L^p(\R^N)$ (of course then our condition on
   $\mathcal{M}$ is not satisfied since $X^{1/p} = L^1(\R^N)$). This says (after applying Fubini's theorem) that
$$
	  \lim_{\varepsilon\to0^+}   \int_{\R^N}
\|\Delta_hf \|_{L^p(\R^N)}^p  \,
\varphi_\varepsilon(|h|) \,dh
=\frac{\left|\mathbb{S}^{N-1}\right|}{N}  \, \|f\|_{L^p(\R^N)}^p,
$$
where $\varphi_\varepsilon$ is given by \eqref{612new}. However,
this statement contradicts \eqref{ThmGenSobMSk1} with \eqref{LpExDif}, i.e.,
$$
	  \lim_{\varepsilon\to0^+}   \int_{\R^N}
\|\Delta_hf \|_{L^p(\R^N)}^p  \,
\varphi_\varepsilon(|h|) \,dh
=2 \, \frac{\left|\mathbb{S}^{N-1}\right|}{N}  \, \|f\|_{L^p(\R^N)}^p.
$$
This observation confirms that finding  the limit \eqref{ThmGenSobMSk'e4} is a rather delicate question, since its exact value depends on additional properties of the involved space  $X$ (even in the r.i. setting). This is in sharp contrast with the counterpart stated in \eqref{ThmGenSobMSk1}, which holds for any r.i. space $X$.
\end{rem}

\begin{proof}[Proof of Theorem \ref{ThmGenSobMSk'}]

To show \eqref{ThmGenSobMSk'e4}, we are going to apply Lemma \ref{LemmaReal}(ii) to the function
\begin{equation*}
g(t):=\left\|
\left[\frac{1}{t^{\frac{N}{k}}}
\int_{|h|\le t^{\frac{1}{k}}}
\left|\Delta^k_hf(\cdot)\right|^p\,dh\right]^{\frac{1}{p}}
\right\|_{X}, \qquad t > 0.
\end{equation*}

From the boundedness of $\mathcal{M}$ on $X^{1/p}$, we derive
\begin{equation}\label{thmgensobmsk'e1}
g(t)
\lesssim\sum_{j=0}^{k}
\left\|\frac{1}{t^{\frac{N}{k}}}
\int_{|h|\le t^{\frac{1}{k}}}
\left|f(\cdot+jh)\right|^p\,dh
\right\|_{X^{\frac{1}{p}}}^{\frac{1}{p}} \lesssim \|f\|_{X}+\sum_{j=1}^{k} j^{\frac{N}{p}} \left\|
\mathcal{M}(|f|^p)\right\|_{X^{\frac{1}{p}}}
^{\frac{1}{p}}
\lesssim \|f\|_X,
\end{equation}
i.e., $g$ satisfies \eqref{LimAspsi1}.

Assume momentarily that
\begin{equation}\label{ThmGenSobMSk'e5}
\lim_{t\to\infty} g(t)^p=
\frac{\left|\mathbb{S}^{N-1}\right|}{N} \, \|f\|_X^p.
\end{equation}
Thus the desired result \eqref{ThmGenSobMSk'e4} follows from Lemma \ref{LemmaReal}(ii) with \eqref{thmgensobmsk'e1} and \eqref{ThmGenSobMSk'e5}.

Next we show the claim \eqref{ThmGenSobMSk'e5}. We first assume that $f\in L^\infty_{{\rm c}}(\R^N)$, say, $\supp f \subset B(0,R)$ for some $R > 0$.
Then, repeating an argument similar to that used in \eqref{thmgensobmsk'e1} with $t^{1/k}$ replaced
by $2R$, we have
\begin{align}\label{ThmGenSobMSk'e6}
I_0(t):&=
\left\|\left[ \frac{1}{t^{\frac{N}{k}}} \int_{|h|\le 2R}\left|\Delta^k_hf(\cdot)\right|^p
\,dh \right]^{\frac{1}{p}}\right\|_{X}\\
&\lesssim t^{-\frac{N}{k p}}\left[\|f\|_X+(2R)^\frac{N}{p}\sum_{j=1}^{k} j^{\frac{N}{p}}
\left\|\mathcal{M}(|f|^p)\right\|_{X^{\frac{1}{p}}}^{\frac{1}{p}} \right]\notag\\
&\lesssim t^{-\frac{N}{k p}}\left[\|f\|_X+
\left\|\mathcal{M}(|f|^p)\right\|_{X^{\frac{1}{p}}}^{\frac{1}{p}}\right]\lesssim
t^{-\frac{N}{k p}}\|f\|_X\to0\notag
\end{align}
as $t\to\infty$.
In addition, we claim that, for any $j\in\{1,\ldots,k\}$,
\begin{align}\label{ThmGenSobMSk'e7}
I_j(t):=
\left\|\left[ \frac{1}{t^{\frac{N}{k}}} \int_{2R<|h|\le t^{\frac{1}{k}}}
\left|f(\cdot+jh)\right|^p\,dh \right]^{\frac{1}{p}}\right\|_{X}
\to0
\end{align}
as $t\to\infty$. Indeed,  if
$t>(2R)^k$ and $x \in B(0, R) \cup B(0, R + k t^{\frac{1}{k}})^c$ then
$$
	f(x + j h) = 0 \qquad \text{for} \qquad 2 R < |h| \leq t^{\frac{1}{k}}.
$$
Accordingly (noting that $f \in L^p(\R^N)$ since $f \in L^\infty_c(\R^N)$)
\begin{align}\label{ThmGenSobMSk'e8}
I_j(t)^p&=\frac{1}{t^{\frac{N}{k}}}
\left\|\int_{2R<|h|\le t^{\frac{1}{k}}}|f(\cdot+jh)|^p\,dh \,
\mathbf{1}_{B(0,R+kt^{\frac{1}{k}})\setminus
B(0,R)} \right\|_{X^{\frac{1}{p}}} \le t^{-\frac{N}{k}}\|f\|_{L^p(\R^N)}^p
\left\|\mathbf{1}_{B(0,(k+1)t^{\frac{1}{k}})}
\right\|_{X^{\frac{1}{p}}}.
\end{align}
From the boundedness of $\mathcal{M}$ on $X^{\frac{1}{p}}$
and \cite[Lemma 2.15(ii)]{shyy17}, we deduce that
there exists an $\eta\in(1,\infty)$ such that\footnote{Recall that $\mathcal{M}^{(\eta)}(f):=[\mathcal{M}(|f|^{\eta})]^{\frac{1}{\eta}}$.}
$\mathcal{M}^{(\eta)}$ is bounded on $X^{\frac{1}{p}}$.
Combining this together with \eqref{ThmGenSobMSk'e8} and  the estimate
 $\mathbf{1}_{B(0,(k+1)t^{\frac{1}{k}})}
\lesssim t^{\frac{N}{k\eta}}\mathcal{M}^{(\eta)}(\mathbf{1}
_{B(0,1)})$, we find
\begin{align*}
I_j(t)^p\lesssim t^{\frac{N}{k}(\frac{1}{\eta}-1)}
\left\|\mathcal{M}^{(\eta)}(\mathbf{1}
_{B(0,1)})\right\|_{X^{\frac{1}{p}}}
\lesssim t^{\frac{N}{k}(\frac{1}{\eta}-1)}
\left\|\mathbf{1}_{B(0,1)}\right\|_X^p
\to0
\end{align*}
as $t\to\infty$. This completes  the
proof of \eqref{ThmGenSobMSk'e7}.

Observe that, for any $h\in\R^N$ with $|h|>2R$, the sets
$\{\supp f(\cdot+jh)\}_{j=0}^k$ are pairwise disjoint.
Thus, for any $\theta > 0$ there exists $C_\theta > 0$ such that, for $t>(2R)^k$,
\begin{align*}
g(t)
&\le C_\theta
\left\| \left[\frac{1}{t^{\frac{N}{k}}} \int_{|h|\le 2R}
\left|\Delta^k_hf(\cdot)\right|^p\,dh \right]^{\frac{1}{p}}\right\|_X  + (1+\theta)\left\| \left[\frac{1}{t^{\frac{N}{k}}}  \int_{2R<|h|\le t^{\frac{1}{k}}}
\left|\sum_{j=0}^{k}(-1)^{k-j}\binom{k}{j} f(\cdot+jh) \right|^p
\,dh \right]^{\frac{1}{p}}\right\|_{X} \\
&=C_\theta I_0(t)+ (1+\theta)
\left\| \left[\frac{\left|\mathbb{S}^{N-1}\right|}{N}\left[1-(2R)^N t^{-\frac{N}{k}}\right]|f|^p
+\sum_{j=1}^{k}\binom{k}{j}^p \frac{1}{t^{\frac{N}{k}}} \int_{2R<|h|\le t^{\frac{1}{k}}}
|f(\cdot+jh)|^p\,dh \right]^{\frac{1}{p}}\right\|_X\\
&\le C_\theta I_0(t) +(1+\theta) C_\theta 2^{\min\{0, \frac{1}{p}-1\} k} \sum_{j=1}^{k}\binom{k}{j} I_j(t)+ (1+\theta)^2
\bigg(\frac{\left|\mathbb{S}^{N-1}\right|}{N} \bigg)^{\frac{1}{p}}
\left[1-(2R)^Nt^{-\frac{N}{k}}\right]^{\frac{1}{p}}\|f\|_X.
\end{align*}
Applying \eqref{ThmGenSobMSk'e6} and \eqref{ThmGenSobMSk'e7}
and letting $t\to\infty$ and then $\theta \to 0^+$,
we  obtain
\begin{equation}\label{Limsupnew}
\limsup_{t\to\infty} g(t)\le \bigg(
\frac{\left|\mathbb{S}^{N-1}\right|}{N} \bigg)^{\frac{1}{p}}\|f\|_X.
\end{equation}

Conversely, for any $h\in\R^N$ with $|h|>2R$,
by the disjointness of
$\{\supp f(\cdot+jh)\}_{j=0}^k$ again, we conclude that,
for any $t>(2R)^k$,
\begin{equation}\label{ThmGenSobMSk'e10}
g(t)^p
\ge\frac{1}{t^{\frac{N}{k}}}
\left\|\int_{2R<|h|\le t^{\frac{1}{k}}}
\left|\Delta^k_hf(\cdot)\right|^p\,dh\right\|_{X^{\frac{1}{p}}} \ge \frac{\left|\mathbb{S}^{N-1}\right|}{N}
\left[1-(2R)^Nt^{-\frac{N}{k}}\right]\|f\|_X^p.
\end{equation}
Letting $t\to\infty$ in \eqref{ThmGenSobMSk'e10},
we  have
\begin{equation}\label{Liminfnew}
\liminf_{t\to\infty} g(t)^p\ge
\frac{\left|\mathbb{S}^{N-1}\right|}{N} \,
\|f\|_X^p.
\end{equation}

As a combination of \eqref{Limsupnew} and \eqref{Liminfnew}, we conclude that \eqref{ThmGenSobMSk'e5} holds for any $f\in L^\infty_{\rm c}(\R^N)$.
If, in addition, $X$ has an absolutely continuous norm, then standard density arguments can be applied to show  that \eqref{ThmGenSobMSk'e5} holds
for any $f\in X$.
\end{proof}

\subsection{Limiting formulas for fractional powers of generators of semigroups}\label{Section6.4}

In this section, we deal with a Banach space $X$ and a  \emph{strongly continuous semigroup} of operators $\{T_t\}_{t > 0}$ acting on $X$, i.e.,  $T_t : X \to X$ with
\begin{equation}\label{SG}
	T_{t+\xi} = T_t T_\xi, \qquad \lim_{t \to 0^+} T_t f = f \quad \text{for} \quad  f \in X, \qquad \text{and} \qquad \|T_t\|_{X \to X} \leq M.
\end{equation}
The corresponding \emph{infinitesimal generator} $\mathcal{A}$ is defined (at least, formally) via the strong limit
\begin{equation}\label{Gen}
	\mathcal{A} f := \lim_{t \to 0^+} \frac{[T_t  - I] f}{t}, \qquad \mathcal{A}^k = \mathcal{A} (\mathcal{A}^{k-1}), \quad k \in \N.
\end{equation}
The set of all elements $f \in X$ such that this limit exists is the domain $D(\mathcal{A})$, which is endowed with
$
	\|f\|_{D(\mathcal{A})} := \|\mathcal{A} f\|_X.
$

Following \cite{Westphal}, \eqref{Gen} can be naturally extended to fractional powers of order $\alpha > 0$ of $-\mathcal{A}$ by\footnote{Recall the well-known fact that $(-\mathcal{A})^\alpha f = C_{\alpha, n} \, \int_0^\infty \frac{[I-T_u]^n f}{u^{\alpha + 1}} \, du$, where $0 < \alpha < n \in \N$.}
\begin{equation}\label{DefFG}
	(-\mathcal{A})^\alpha f := \lim_{t \to 0^+} \frac{[I-T_t]^\alpha f}{t^\alpha}
\end{equation}
whenever this limit exists, and $D((-\mathcal{A})^\alpha)$ is the set of all $f \in X$ such that \eqref{DefFG} is finite and let $\|f\|_{D((-\mathcal{A})^\alpha)} := \|(-\mathcal{A})^\alpha f\|_X$. Here
\begin{equation}\label{DefFG2}
	[I-T_t]^\alpha := \sum_{j=0}^\infty (-1)^j \binom{\alpha}{j} T_{j t}, \qquad \binom{\alpha}{j} := \prod_{l=1}^j \frac{\alpha-l + 1}{l}\quad  \text{if} \quad   j \in \N,  \quad \binom{\alpha}{0} := 1.
\end{equation}
In the integer case $\alpha = k$, both definitions of $(-\mathcal{A})^k$ (cf.  \eqref{Gen} and \eqref{DefFG}) coincide.  

Distinguished examples of strongly continuous semigroups on $L^p(\R^N)$ are translations (if $N=1$ and $\mathcal{A} f = f'$), heat semigroup (with $\mathcal{A} = \Delta$, the \emph{Laplace operator}), Poisson semigroup (with $\mathcal{A}^2 = \Delta$), Ornstein--Uhlenbeck semigroup ($\mathcal{A} = \Delta - x \cdot \nabla$), as well as semigroups related to Kolmogorov operators or, more generally, Kolmogorov--Fokker--Planck operators related to  H\"ormander semigroups \cite{Hormander}. On the other hand, the well-known examples of semigroups on $L^p(\R^N, \gamma_N)$ include Ornstein--Uhlenbeck and Poisson--Hermite semigroups. On $L^p(\mathbb{S}^{N-1})$ (equipped with the surface measure), one may consider e.g. the Weierstrass semigroup in terms of spherical harmonic whose related generator is the Laplace--Beltrami operator.

\begin{thm}\label{ThmSemGen}
Let $p, \alpha > 0$.
\begin{enumerate}[\upshape(i)]
	\item Given  a family of functions $\{\rho_\varepsilon\}_{\varepsilon > 0}$ satisfying \eqref{AssRho1} and  \eqref{AssRho},
	then
\begin{equation*}
	\lim_{\varepsilon \to 0^+} \, \left[ \int_0^\infty  \frac{\|[I-T_t]^\alpha f\|^p_X}{t^{\alpha(p-1)}} \,  \rho_\varepsilon (t^\alpha) \, \frac{dt}{t} \right]^{\frac{1}{p}} = \frac{1}{\alpha^{\frac{1}{p}}} \,  \|(-\mathcal{A})^\alpha f\|_{X}
\end{equation*}
for any $f \in D((-\mathcal{A})^\alpha)$.
\item  Assume that the limit
$
	\lim_{t \to \infty} T_t f
$
exists for all $f \in X$ and let
\begin{equation}\label{Pf}
	P f := \lim_{t \to \infty} T_t f.
\end{equation}
Given  a family of functions $\{\psi_\varepsilon\}_{\varepsilon > 0}$ satisfying  \eqref{AssPsi1} and \eqref{AssPsi},
	then
\begin{equation*}
	\lim_{\varepsilon \to 0^+} \, \left( \int_0^\infty \|[I-T_t]^\alpha f\|_X^p \,  \psi_\varepsilon (t) \, dt \right)^{\frac{1}{p}} = \| f + c_\alpha P f\|_{X}
\end{equation*}
for any $f \in X$. Here $c_\alpha := \sum_{j=1}^\infty (-1)^j \binom{\alpha}{j}$.
\end{enumerate}
\end{thm}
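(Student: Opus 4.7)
The plan is to reduce both parts of Theorem \ref{ThmSemGen} to Lemma \ref{LemmaReal}, which is precisely the tool for converting a pointwise limit of a single-variable function into an integral limit against the families $\{\rho_\varepsilon\}$ and $\{\psi_\varepsilon\}$. In each case, the only work left is an appropriate change of variables to cast the integral in the form required by the Lemma, and a uniform bound on $\|[I-T_t]^\alpha f\|_X$ obtained from \eqref{DefFG2}.

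For part (i), I first perform the substitution $u = t^\alpha$, which gives $dt/t = \alpha^{-1}\,du/u$ and $t^{\alpha(p-1)} = u^{p-1}$, and consequently
\[
\int_0^\infty \frac{\|[I-T_t]^\alpha f\|_X^p}{t^{\alpha(p-1)}}\,\rho_\varepsilon(t^\alpha)\,\frac{dt}{t} \;=\; \frac{1}{\alpha}\int_0^\infty \left(\frac{g(u)}{u}\right)^p \rho_\varepsilon(u)\,du,
\]
where $g(u) := \|[I-T_{u^{1/\alpha}}]^\alpha f\|_X$. The hypotheses of Lemma \ref{LemmaReal}(i) are then verified directly: the defining relation \eqref{DefFG}, combined with $f \in D((-\mathcal{A})^\alpha)$, yields
\[
\lim_{u \to 0^+}\frac{g(u)}{u} \;=\; \lim_{s \to 0^+}\frac{\|[I-T_s]^\alpha f\|_X}{s^\alpha} \;=\; \|(-\mathcal{A})^\alpha f\|_X,
\]
while the expansion \eqref{DefFG2}, the uniform bound $\|T_{js}\|_{X \to X} \leq M$, and the absolute summability $\sum_{j \geq 0}|\binom{\alpha}{j}| < \infty$ (valid for every $\alpha > 0$) give $\|[I-T_s]^\alpha f\|_X \lesssim \|f\|_X$ for all $s > 0$; this together with the previous limit delivers $\sup_{u > 0} g(u)/u < \infty$. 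Lemma \ref{LemmaReal}(i) then produces the assertion, with the prefactor $\alpha^{-1/p}$ coming from the change of variables.

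For part (ii), I apply Lemma \ref{LemmaReal}(ii) directly to $h(t) := \|[I-T_t]^\alpha f\|_X$. The bound $\sup_{t > 0} h(t) < \infty$ is exactly the estimate already invoked in (i) and requires no new argument. The core step is identifying the limit at infinity: writing
\[
[I-T_t]^\alpha f \;=\; f \;+\; \sum_{j=1}^\infty (-1)^j \binom{\alpha}{j}\, T_{jt} f,
\]
one passes to the limit inside the sum by dominated convergence, using the summable majorant $|\binom{\alpha}{j}|\,M\,\|f\|_X$ and the hypothesis $T_{jt} f \to P f$ as $t \to \infty$ for each $j \geq 1$ (recall \eqref{Pf}); this yields $\lim_{t \to \infty}[I-T_t]^\alpha f = f + c_\alpha P f$ in $X$, and continuity of the norm gives $\lim_{t \to \infty} h(t) = \|f + c_\alpha P f\|_X$. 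Lemma \ref{LemmaReal}(ii) then concludes the argument.

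The main technical point, on which the whole proof rests, is the absolute summability of $\{\binom{\alpha}{j}\}_{j \geq 0}$ for non-integer $\alpha > 0$; this follows from the standard asymptotic $\binom{\alpha}{j} = O(j^{-\alpha-1})$ as $j \to \infty$, which is a consequence of the identity $\binom{\alpha}{j} = (-1)^j\,\Gamma(j-\alpha)/[\Gamma(-\alpha)\,\Gamma(j+1)]$ and Stirling's formula. Once this is granted, both the uniform boundedness of $\|[I-T_t]^\alpha f\|_X$ and the applicability of dominated convergence in part (ii) are immediate, and the rest of the argument is an essentially mechanical application of Lemma \ref{LemmaReal}.
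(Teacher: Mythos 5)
Your proof is correct and follows the same overall route as the paper: both parts are reduced to Lemma \ref{LemmaReal} after the change of variables $u=t^\alpha$, with the limit at $0^+$ supplied by \eqref{DefFG} in part (i) and the limit at $\infty$ by dominated convergence over the series \eqref{DefFG2} in part (ii). The one noteworthy divergence is in how you verify the boundedness hypothesis \eqref{LimAs1} in part (i): the paper invokes Westphal's integral representation $[I-T_t]^\alpha f = t^\alpha \int_0^\infty p_\alpha(u/t)\, T_u(-\mathcal{A})^\alpha f\, du/t$ with $\int_0^\infty p_\alpha = 1$ to get the uniform estimate $g(u)/u \leq M\,\|(-\mathcal{A})^\alpha f\|_X$ directly, whereas you deduce $\sup_{u>0} g(u)/u < \infty$ from the cruder bound $\|[I-T_s]^\alpha f\|_X \lesssim \|f\|_X$ (which controls $g(u)/u$ for $u$ bounded away from $0$) combined with the finiteness of the limit at $u\to 0^+$ (which controls $g(u)/u$ near $0$). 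Your route is a bit more economical since it avoids the Westphal formula entirely and reuses the same series bound needed in part (ii); the paper's estimate is more explicit and gives a quantitative constant, but for the purpose of Lemma \ref{LemmaReal} either suffices.
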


\begin{rem}
\begin{enumerate}[\upshape(i)]
	\item Characterizations of the existence of the limit \eqref{Pf} are known, see \cite{Bobrowski}. For example, this is the case for  an analytic semigroup $\{T_t\}_{t > 0}$ on a reflexive Banach space $X$. See also \cite{Garofalo}, where rates of convergence are provided in the special case of H\"ormander semigroups on $X = L^p(\R^N)$.
	\item Different approaches to limiting formulas based on semigroups in the classical setting $\alpha =1$ may be found in \cite{Garofalo} and \cite{Milman}.
\end{enumerate}
\end{rem}

\begin{proof}[Proof of Theorem \ref{ThmSemGen}]
(i): Let
$$
	g(t^\alpha) := \|[I-T_t]^\alpha f\|_X, \qquad \forall t > 0.
$$
According to \cite[(2.12), p. 563]{Westphal}, the following formula holds
$$
	[I-T_t]^\alpha f = t^\alpha \int_0^\infty p_\alpha \Big(\frac{u}{t} \Big) T_u (-\mathcal{A})^\alpha f \, \frac{du}{t},
$$
where the function $p_\alpha$ was explicitly computed in \cite[p. 560]{Westphal}. In particular, $\int_0^\infty p_\alpha(u) \, du = 1$. Hence
$$
	g(t^\alpha) \leq t^\alpha \int_0^\infty p_\alpha \Big(\frac{u}{t} \Big) \|T_u (-\mathcal{A})^\alpha f \|_X \, \frac{du}{t} \leq M t^\alpha  \| f \|_{D((-\mathcal{A})^\alpha)},
$$
which implies \eqref{LimAs1}. Furthermore, by \eqref{DefFG},
$$
	\lim_{t \to 0^+} \frac{g(t^\alpha)}{t^\alpha} = \|(-\mathcal{A})^\alpha f\|_X.
$$
We are in a position to apply Lemma \ref{LemmaReal}(i) and get
	\begin{equation*}
	\alpha^{\frac{1}{p}}  	\bigg[\int_0^\infty \frac{\|[I-T_t]^\alpha f\|^p_X}{t^{\alpha (p-1) }} \,  \rho_\varepsilon(t^\alpha)  \, \frac{dt}{t} \bigg]^{\frac{1}{p}} = \bigg[\int_0^\infty \bigg(\frac{g(t)}{t} \bigg)^p \rho_\varepsilon(t) \, dt \bigg]^{\frac{1}{p}}  \to  \|(-\mathcal{A})^\alpha f\|_X
	\end{equation*}
	as $\varepsilon \to 0^+$.

(ii): By \eqref{DefFG2} and \eqref{SG}, we have
$$
	g(t^\alpha) \leq \sum_{j=0}^\infty \bigg| \binom{\alpha}{j}  \bigg| \,  \|T_{j t} f \|_X \leq M \, \|f\|_X \sum_{j=0}^\infty \bigg| \binom{\alpha}{j}  \bigg|,
$$
where the last series is convergent. Furthermore, it is clear that
$$
	\lim_{t \to \infty} \, [I-T_t]^\alpha f = f + c_\alpha \, P,
$$
and thus $\lim_{t \to \infty} g(t^\alpha) = \|f + c_\alpha P\|_X$. We can invoke Lemma \ref{LemmaReal}(ii) to get
$$
		\lim_{\varepsilon \to 0^+} \,
\bigg[\int_0^\infty \|[I-T_t]^\alpha f\|_X^p \,
 \psi_\varepsilon(t) \, dt \bigg]^{\frac{1}{p}} = \| f + c_\alpha \, P\|_X.
$$
\end{proof}

We now mention several important corollaries  of the previous result obtained using the functions from Examples \ref{Ex1} and \ref{Ex1Dual} in Appendix \ref{SectionAA}. In more detail, we consider $\rho_\varepsilon(t) = \varepsilon t^{\varepsilon} \mathbf{1}_{(0, 1)}(t)$ (with $\varepsilon = \frac{p}{\alpha} (\alpha - s)$) and $\psi_\varepsilon(t) = \varepsilon t^{\varepsilon-1} \mathbf{1}_{(1, \infty)}(t)$ (with $\varepsilon = s p$) to get Corollary \ref{6.58}, $\rho_\varepsilon(t) = \frac{\beta}{\varepsilon^{\alpha \beta}} t^\beta \mathbf{1}_{(0, \varepsilon^\alpha)}(t)$ and $\psi_{\varepsilon}(t)=
\frac{\beta}{\varepsilon^\beta t^{\beta+1}}{\bf 1}
_{(\varepsilon^{-1},\infty)}(t)$ to get  Corollary \ref{6.60}, $\rho_\varepsilon (t) = \frac{1}{|\log \varepsilon| t} \, \mathbf{1}_{(\varepsilon, 1)}(t)$ and $\psi_\varepsilon (t) = \frac{1}{|\log \varepsilon| t} \, \mathbf{1}_{(1, \varepsilon^{-1})}(t)$ to get  Corollary \ref{6.61}.

\begin{cor}\label{6.58}
Let $p, \alpha \in (0, \infty)$. Then, for $f \in X \cap D((-\mathcal{A})^\alpha)$,
$$
	\lim_{s \to \alpha^-} \, (\alpha-s) \, \int_0^\infty \frac{\|[I-T_t]^\alpha f\|_X^p}{t^{s p}} \, \frac{dt}{t} = \frac{1}{p} \, \|(-\mathcal{A})^\alpha f\|_X^p
$$
and
	$$
		\lim_{s \to 0^+} \, s \, \int_0^\infty \frac{\|[I-T_t]^\alpha f\|_X^p}{t^{s p}} \, \frac{dt}{t} = \frac{1}{p} \, \|f + c_\alpha P f\|_X^p.
	$$
\end{cor}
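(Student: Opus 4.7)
The plan is to derive both limits from Theorem \ref{ThmSemGen} by specializing the families $\{\rho_\varepsilon\}_{\varepsilon > 0}$ and $\{\psi_\varepsilon\}_{\varepsilon > 0}$ to the canonical power weights $\rho_\varepsilon(t) = \varepsilon t^{\varepsilon-1}\mathbf{1}_{(0,1)}(t)$ and $\psi_\varepsilon(t) = \varepsilon t^{-\varepsilon-1}\mathbf{1}_{(1,\infty)}(t)$, and then upgrading each resulting one-sided truncation into the full integral $\int_0^\infty$. A direct computation shows that both families satisfy \eqref{AssRho1}--\eqref{AssRho} (resp.\ \eqref{AssPsi1}--\eqref{AssPsi}), since $a^\varepsilon \to 1$ and $a^{-\varepsilon}\to 1$ as $\varepsilon \to 0^+$.

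For the first identity, I would substitute $\rho_\varepsilon(t^\alpha) = \varepsilon t^{\alpha\varepsilon-\alpha}\mathbf{1}_{(0,1)}(t)$ in the left-hand side of Theorem \ref{ThmSemGen}(i) and collapse exponents to land at
\begin{equation*}
\lim_{\varepsilon \to 0^+}\varepsilon \int_0^1 \frac{\|[I-T_t]^\alpha f\|_X^p}{t^{\alpha p - \alpha\varepsilon}} \frac{dt}{t} = \frac{1}{\alpha}\|(-\mathcal{A})^\alpha f\|_X^p.
\end{equation*}
The reparametrization $\varepsilon = p(\alpha-s)/\alpha$ matches $\varepsilon \to 0^+$ with $s \to \alpha^-$ and $\alpha p - \alpha\varepsilon$ with $sp$, thereby producing the desired limit, but with the integral restricted to $(0,1)$. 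To replace $(0,1)$ by $(0,\infty)$, I would invoke the uniform bound $\|[I-T_t]^\alpha f\|_X \lesssim \|f\|_X$ (stemming from \eqref{SG} and the absolute convergence of the series in \eqref{DefFG2}) to estimate
\begin{equation*}
\int_1^\infty \frac{\|[I-T_t]^\alpha f\|_X^p}{t^{sp}}\frac{dt}{t} \lesssim \frac{\|f\|_X^p}{sp},
\end{equation*}
so that multiplication by $(\alpha-s)$ sends the tail to zero as $s \to \alpha^-$.

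For the second identity the approach is symmetric. Setting $\varepsilon = sp$ in the chosen $\psi_\varepsilon$, Theorem \ref{ThmSemGen}(ii) delivers
\begin{equation*}
\lim_{s \to 0^+} sp \int_1^\infty \|[I-T_t]^\alpha f\|_X^p\, t^{-sp-1}\,dt = \|f + c_\alpha Pf\|_X^p,
\end{equation*}
which after dividing by $p$ is the claimed identity, truncated to $(1,\infty)$. The complementary piece $\int_0^1$ is where the assumption $f \in D((-\mathcal{A})^\alpha)$ enters: the Westphal integral representation used in the proof of Theorem \ref{ThmSemGen}(i) yields $\|[I-T_t]^\alpha f\|_X \lesssim t^\alpha \|(-\mathcal{A})^\alpha f\|_X$, hence
\begin{equation*}
\int_0^1 \|[I-T_t]^\alpha f\|_X^p\, t^{-sp-1}\,dt \lesssim \int_0^1 t^{(\alpha-s)p-1}\,dt = \frac{1}{(\alpha-s)p},
\end{equation*}
which remains bounded for $s$ in a neighborhood of $0$; multiplying by $s$ then yields a vanishing contribution.

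The main obstacle is really bookkeeping rather than analysis: one must align the factor $1/\alpha$ produced by Theorem \ref{ThmSemGen}(i) with the parameter relation $\varepsilon = p(\alpha-s)/\alpha$ so that the prefactor in the corollary comes out to exactly $1/p$, and one must verify that each truncated integral can indeed be extended to $\int_0^\infty$ under the multipliers $(\alpha-s)$ and $s$ respectively. Both checks reduce to elementary power-integrals, and no new analytic ingredient beyond Theorem \ref{ThmSemGen} is required.
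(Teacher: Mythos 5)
Your proposal follows exactly the route the paper sketches just before the corollary: the same families $\rho_\varepsilon(t) = \varepsilon t^{\varepsilon-1}\mathbf{1}_{(0,1)}(t)$ and $\psi_\varepsilon(t) = \varepsilon t^{-\varepsilon-1}\mathbf{1}_{(1,\infty)}(t)$, the same reparametrizations $\varepsilon = \frac{p}{\alpha}(\alpha-s)$ and $\varepsilon = sp$, applied to Theorem~\ref{ThmSemGen}. The one point where you go beyond the paper's terse indication is in explicitly supplying the tail estimates needed to promote the truncated integrals $\int_0^1$ and $\int_1^\infty$ (which is all the compactly supported weights directly give) to the full $\int_0^\infty$ claimed in the corollary; these estimates — the uniform bound $\|[I-T_t]^\alpha f\|_X\lesssim\|f\|_X$ against the factor $(\alpha-s)\to0$, and the Westphal-type bound $\|[I-T_t]^\alpha f\|_X\lesssim t^\alpha\|(-\mathcal{A})^\alpha f\|_X$ against the factor $s\to0$ — are correct and do close the gap the paper leaves implicit.
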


\begin{rem}\label{Remark659}
	 In the special case $X = L^p(\R), \, p \in [1, \infty),$ and $T_t f(x) = f(x+t)$, our results recover classical Bourgain--Brezis--Mironescu and Maz'ya--Shaposhnikova formulas if $\alpha =1$, as well as the recent results given in \cite{DominguezMilman} if $\alpha > 0$. Note that $\mathcal{A} f =f'$ and $(-\mathcal{A})^\alpha f = \lim_{t \to 0^+} \frac{\Delta_t^\alpha f}{t^\alpha}$, the \emph{Liouville--Gr\"unwald--Letnikov derivative} of order $\alpha$, where $\Delta^\alpha_t$ is the fractional difference operator given by \eqref{DefFG2}; see e.g. \cite[§20, (20.7)]{Samko}. Recall that $\|(-\mathcal{A})^\alpha f\|_{L^p(\R)} \approx \|(-\Delta)^{\alpha/2} f\|_{L^p(\R)}$ if $p \in (1, \infty)$.
\end{rem}

\begin{cor}\label{6.60}
Let $p, \alpha, \beta \in(0,\infty)$.
Then, for $f \in D((-\mathcal{A})^\alpha)$,
\begin{align*}
\lim\limits_{\varepsilon\to0^+}
\frac{1}{\varepsilon^{\alpha \beta}}
\int_{0}^{\varepsilon} \frac{\left\|[I-T_t]^\alpha f\right\|_{X}^p}{t^{\alpha(p-\beta)}} \,\frac{dt}{t}
= \frac{1}{\alpha \beta} \, \left\|(-\mathcal{A)^\alpha}f\right\|^p_X,
\end{align*}
and, for $f \in X$,
\begin{align*}
\lim\limits_{\varepsilon\to\infty}
\varepsilon^\beta \int_{\varepsilon}^{\infty}
\frac{\left\|[I-T_t]^\alpha f\right\|_{X}^p}{t^{\beta}}
\,\frac{dt}{t}
= \frac{1}{\beta} \,  \|f + c_\alpha P f\|^p_X.
\end{align*}
\end{cor}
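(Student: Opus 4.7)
The plan is to obtain both formulas as direct specializations of Theorem~\ref{ThmSemGen}, using the two families of weights advertised in the paragraph preceding the statement.

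For the first formula, I would choose $\rho_\varepsilon(t) = \beta\varepsilon^{-\alpha\beta}\, t^{\beta-1}\, \mathbf{1}_{(0,\varepsilon^\alpha)}(t)$. A one-line computation shows $\int_0^\infty \rho_\varepsilon(t)\,dt = 1$, and for any fixed $a\in(0,1)$ we have $\varepsilon^\alpha<a$ for all small $\varepsilon$, hence $\int_0^a \rho_\varepsilon(t)\,dt = 1$ eventually; thus \eqref{AssRho1} and \eqref{AssRho} hold. Substituting $u=t^\alpha$ in the expression $\rho_\varepsilon(t^\alpha)$ from Theorem~\ref{ThmSemGen}(i), the integral appearing there becomes
\begin{equation*}
\int_0^\infty \frac{\|[I-T_t]^\alpha f\|_X^p}{t^{\alpha(p-1)}}\, \rho_\varepsilon(t^\alpha)\, \frac{dt}{t}
= \frac{\beta}{\varepsilon^{\alpha\beta}} \int_0^{\varepsilon} \frac{\|[I-T_t]^\alpha f\|_X^p}{t^{\alpha(p-\beta)}}\, \frac{dt}{t},
\end{equation*}
and Theorem~\ref{ThmSemGen}(i) asserts that this quantity tends to $\frac{1}{\alpha}\|(-\mathcal{A})^\alpha f\|_X^p$ as $\varepsilon\to 0^+$. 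Dividing by $\beta$ yields the first identity.

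For the second formula the support of the family must sit near $+\infty$, so I would pick $\psi_\varepsilon(t) = \beta\varepsilon^{-\beta} t^{-\beta-1}\,\mathbf{1}_{(\varepsilon^{-1},\infty)}(t)$, which integrates to $1$ and, for any $a>1$, gives $\int_a^\infty \psi_\varepsilon(t)\,dt = 1$ for all sufficiently small $\varepsilon$; hence \eqref{AssPsi1} and \eqref{AssPsi} are verified. To match the statement (which uses $\varepsilon\to\infty$) I would set $\tilde\varepsilon = \varepsilon^{-1}$, so that $\psi_{\tilde\varepsilon^{-1}}(t) = \beta \tilde\varepsilon^{\beta} t^{-\beta-1}\mathbf{1}_{(\tilde\varepsilon,\infty)}(t)$ and
\begin{equation*}
\int_0^\infty \|[I-T_t]^\alpha f\|_X^p\, \psi_{\tilde\varepsilon^{-1}}(t)\, dt = \beta\, \tilde\varepsilon^{\beta}\int_{\tilde\varepsilon}^\infty \frac{\|[I-T_t]^\alpha f\|_X^p}{t^\beta}\,\frac{dt}{t}.
\end{equation*}
Theorem~\ref{ThmSemGen}(ii) (applied as $\varepsilon\to 0^+$, i.e. $\tilde\varepsilon\to\infty$) gives that this expression converges to $\|f+c_\alpha P f\|_X^p$; dividing by $\beta$ yields the second identity (after renaming $\tilde\varepsilon$ as $\varepsilon$).

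Since both assertions reduce to checking the two normalization hypotheses and performing the change of variables $u=t^\alpha$ (respectively $\tilde\varepsilon=\varepsilon^{-1}$) in the formulas of Theorem~\ref{ThmSemGen}, there is no genuine obstacle beyond bookkeeping of the constants $\alpha$, $\beta$, and the transition between the $\varepsilon\to 0^+$ and $\varepsilon\to\infty$ parametrizations; the only point requiring a little care is the identification of the support of $\rho_\varepsilon(t^\alpha)$ as $(0,\varepsilon)$ (rather than $(0,\varepsilon^\alpha)$) after the substitution $u=t^\alpha$, which is what produces the correct exponent of $\varepsilon$ in the prefactor $\varepsilon^{-\alpha\beta}$.
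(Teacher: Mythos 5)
Your proposal is correct and is exactly the route the paper intends: specialize Theorem~\ref{ThmSemGen}(i) and (ii) to the two weight families indicated just before the corollary, use that the support transforms from $(0,\varepsilon^\alpha)$ in $u$ to $(0,\varepsilon)$ in $t$ after $u=t^\alpha$, and divide by $\beta$; the second formula is obtained from the $\varepsilon\to0^+$ statement via the reparametrization $\varepsilon\mapsto\varepsilon^{-1}$. One small but genuine point in your favor: your $\rho_\varepsilon(t)=\beta\varepsilon^{-\alpha\beta}t^{\beta-1}\mathbf{1}_{(0,\varepsilon^\alpha)}(t)$ is the correct normalization (it integrates to $1$), whereas the paper's displayed $\rho_\varepsilon(t)=\frac{\beta}{\varepsilon^{\alpha\beta}}t^{\beta}\mathbf{1}_{(0,\varepsilon^\alpha)}(t)$ does not satisfy \eqref{AssRho1} and is evidently a typo for the $t^{\beta-1}$ you used.
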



\begin{cor}\label{6.61}
Let $p, \alpha \in(0,\infty)$.
Then, for $f \in X \cap D((-\mathcal{A})^\alpha)$,
\begin{align*}
\lim\limits_{\varepsilon\to0^+}
\frac{1}{|\log\varepsilon|}
\int_{\varepsilon}^{\infty}
\frac{\|[I-T_t]^\alpha f\|_X^p}{t^{\alpha p}}\,\frac{dt}{t}
=\left\|(-\mathcal{A})^\alpha f\right\|_X^p
\end{align*}
and
\begin{align*}
\lim\limits_{\varepsilon\to\infty}
\frac{1}{|\log\varepsilon|}
\int_{0}^{\varepsilon}
\|[I-T_t]^\alpha f\|_X^p\,\frac{dt}{t} = \|f + c_\alpha P f\|^p_X.
\end{align*}
\end{cor}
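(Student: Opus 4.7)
The plan is to deduce both identities from Theorem \ref{ThmSemGen} by specializing to the weight families $\rho_\varepsilon(t)=\frac{1}{|\log\varepsilon|\,t}\,\mathbf{1}_{(\varepsilon,1)}(t)$ and $\psi_\varepsilon(t)=\frac{1}{|\log\varepsilon|\,t}\,\mathbf{1}_{(1,1/\varepsilon)}(t)$ indicated in the paragraph preceding the statement (cf.\ Examples \ref{Ex1}(iii) and \ref{Ex1Dual}(iii)). Before invoking the theorem I would verify admissibility by direct integration: $\int_0^\infty\rho_\varepsilon\,dt=\frac{1}{|\log\varepsilon|}\int_\varepsilon^1 t^{-1}\,dt=1$ and, for any $a\in(0,1)$, $\int_0^a\rho_\varepsilon\,dt=1-|\log a|/|\log\varepsilon|\to 1$ as $\varepsilon\to 0^+$, so \eqref{AssRho1} and \eqref{AssRho} hold; completely symmetric computations give \eqref{AssPsi1} and \eqref{AssPsi} for $\psi_\varepsilon$.

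For the first identity I would apply Theorem \ref{ThmSemGen}(i). Since the weight enters there as $\rho_\varepsilon(t^\alpha)$, the effective support becomes $t\in(\varepsilon^{1/\alpha},1)$ and a direct simplification of the integrand yields
\begin{equation*}
\int_0^\infty\frac{\|[I-T_t]^\alpha f\|_X^p}{t^{\alpha(p-1)}}\,\rho_\varepsilon(t^\alpha)\,\frac{dt}{t}=\frac{1}{|\log\varepsilon|}\int_{\varepsilon^{1/\alpha}}^{1}\frac{\|[I-T_t]^\alpha f\|_X^p}{t^{\alpha p}}\,\frac{dt}{t},
\end{equation*}
which by Theorem \ref{ThmSemGen}(i) tends to $\frac{1}{\alpha}\|(-\mathcal{A})^\alpha f\|_X^p$. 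I would then substitute $\varepsilon\mapsto\varepsilon^\alpha$ to bring the lower endpoint back to $\varepsilon$; the identity $|\log\varepsilon^\alpha|=\alpha|\log\varepsilon|$ absorbs the factor $1/\alpha$, so
\begin{equation*}
\lim_{\varepsilon\to 0^+}\frac{1}{|\log\varepsilon|}\int_\varepsilon^{1}\frac{\|[I-T_t]^\alpha f\|_X^p}{t^{\alpha p}}\,\frac{dt}{t}=\|(-\mathcal{A})^\alpha f\|_X^p.
\end{equation*}
To upgrade the upper endpoint from $1$ to $\infty$, I would note that $\|[I-T_t]^\alpha f\|_X\leq M_\alpha\|f\|_X$ with $M_\alpha:=\sum_{j=0}^\infty|\binom{\alpha}{j}|<\infty$ (by \eqref{DefFG2} and \eqref{SG}, exactly as invoked in the proof of Theorem \ref{ThmSemGen}(ii)), so that the tail integral over $(1,\infty)$ is finite and independent of $\varepsilon$; the prefactor $1/|\log\varepsilon|\to 0$ then annihilates it.

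The second identity follows by the symmetric strategy via Theorem \ref{ThmSemGen}(ii). Substituting $\psi_\varepsilon$ reduces the weighted integral to $\frac{1}{|\log\varepsilon|}\int_1^{1/\varepsilon}\|[I-T_t]^\alpha f\|_X^p\,\frac{dt}{t}$, whose limit is $\|f+c_\alpha Pf\|_X^p$. Reparametrizing $\varepsilon\mapsto 1/\varepsilon$ turns the $\varepsilon\to 0^+$ limit into $\varepsilon\to\infty$ and the interval into $(1,\varepsilon)$. To extend the interval to $(0,\varepsilon)$ I would invoke the Westphal representation exploited in the proof of Theorem \ref{ThmSemGen}(i), which gives $\|[I-T_t]^\alpha f\|_X\lesssim t^\alpha\|(-\mathcal{A})^\alpha f\|_X$ for $f\in D((-\mathcal{A})^\alpha)$; hence the head $\int_0^1 t^{-1}\|[I-T_t]^\alpha f\|_X^p\,dt\lesssim\int_0^1 t^{\alpha p-1}\,dt<\infty$ is finite, so its contribution vanishes after division by $|\log\varepsilon|$. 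The only mildly delicate point is the bookkeeping of the two logarithmic reparametrizations ($\varepsilon\mapsto\varepsilon^\alpha$ in the first formula, $\varepsilon\mapsto 1/\varepsilon$ in the second); everything else is a routine substitution into Theorem \ref{ThmSemGen} combined with standard head/tail estimates.
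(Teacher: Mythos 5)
Your proof is correct and follows exactly the route the paper intends: the paper's own justification for Corollary \ref{6.61} is a single sentence specifying the weights $\rho_\varepsilon (t) = \frac{1}{|\log \varepsilon| t}\,\mathbf{1}_{(\varepsilon, 1)}(t)$ and $\psi_\varepsilon(t) = \frac{1}{|\log\varepsilon| t}\,\mathbf{1}_{(1, \varepsilon^{-1})}(t)$ in Theorem \ref{ThmSemGen}, and you correctly fill in the implicit steps (verifying \eqref{AssRho1}--\eqref{AssRho} and \eqref{AssPsi1}--\eqref{AssPsi}, the $\varepsilon\mapsto\varepsilon^\alpha$ and $\varepsilon\mapsto 1/\varepsilon$ reparametrizations that absorb the $1/\alpha$ and flip the limit direction, and the tail estimate $\|[I-T_t]^\alpha f\|_X\lesssim\|f\|_X$ together with the head estimate $\|[I-T_t]^\alpha f\|_X\lesssim t^\alpha\|(-\mathcal{A})^\alpha f\|_X$ to replace the finite integration range by $(\varepsilon,\infty)$ and $(0,\varepsilon)$ respectively).
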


\appendix

\section{Examples of operators approximating unity}\label{SectionAA}

	\begin{ex}\label{Ex1}
	\begin{enumerate}[\upshape(i)]
	\item $\rho_\varepsilon(t) = \varepsilon  t^{\varepsilon -1} \, \mathbf{1}_{(0, 1)}(t)$.
	\item Assume that $\xi$ is a compactly supported positive function with $\int_{0}^\infty \xi (t) \, dt = 1$ and let
	$$
		\rho_\varepsilon (t) = \frac{1}{\varepsilon} \, \xi \bigg(\frac{t}{\varepsilon}\bigg).
	$$
	The prototypical example is given by
	\begin{equation}\label{27new}
	\xi(t) = \alpha t^{\alpha-1} \, \mathbf{1}_{(0, 1)}(t), \qquad \alpha > 0.
	\end{equation}
	\item 	$\rho_\varepsilon (t) = \frac{1}{|\log \varepsilon| t} \, \mathbf{1}_{(\varepsilon, 1)}(t)$.
	\end{enumerate}
	
	\end{ex}

	Examples of families $\{\psi_\varepsilon\}$ satisfying \eqref{AssPsi1}-\eqref{AssPsi} can be easily obtained from $\{\rho_\varepsilon\}$ with  \eqref{AssRho1}-\eqref{AssRho}  via the transformation
	$$
	\psi_\varepsilon (t) = \frac{1}{t^2} \, \rho_\varepsilon \Big(\frac{1}{t} \Big).
	$$

		\begin{ex}\label{Ex1Dual}
\begin{enumerate}[\upshape(i)]
\item 	$\psi_\varepsilon(t) = \varepsilon \, t^{-\varepsilon -1} \, \mathbf{1}_{(1, \infty)}(t)$.	
\item $\psi_{\varepsilon}(t)=
\frac{\alpha}{\varepsilon^\alpha t^{\alpha+1}}{\bf 1}
_{(\varepsilon^{-1},\infty)}(t), \, \alpha > 0$.
\item $\psi_\varepsilon (t) = \frac{1}{|\log \varepsilon| t} \, \mathbf{1}_{(1, \varepsilon^{-1})}(t)$.
\end{enumerate}
\end{ex}

\section{Computability of the $K$-functional}\label{AppendixB}

We obtain the following description of the $K$-functional for the pair $(X, \dot{W}^k X)$.

\begin{lem}\label{lemmaAk}
Let $X$ be a r.i. space on $\R^N$. Let $k\in\mathbb{N}$ and $p\in(0,\infty)$. Then
\begin{align*}
K\left(t^k,f;X,\dot{W}^kX\right)
\approx \left(\frac{1}{t^N}
\int_{|h|\le t}\left\|
\Delta^k_hf\right\|_X^p\,dh\right)^{\frac{1}{p}}
\end{align*}
for any $f\in W^kX$ and $t\in(0,\infty)$.
\end{lem}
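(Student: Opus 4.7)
The plan is to prove the equivalence by establishing each direction separately. For the inequality bounding the averaged quantity by the $K$-functional, pick any admissible decomposition $f = f_0 + f_1$ with $f_1 \in \dot{W}^k X$. Translation invariance of the r.i.\ norm yields $\|\Delta^k_h f_0\|_X \leq 2^k \|f_0\|_X$, while the $k$-fold integral representation \eqref{dhk0} gives $\|\Delta^k_h f_1\|_X \lesssim |h|^k \|\nabla^k f_1\|_X$. Combining these via a quasi-triangle inequality for the $p$-th power and averaging over $|h| \leq t$ produces
\[
\left(\tfrac{1}{t^N}\int_{|h| \leq t} \|\Delta^k_h f\|_X^p\, dh\right)^{1/p} \lesssim \|f_0\|_X + t^k \|\nabla^k f_1\|_X,
\]
and taking the infimum over admissible decompositions gives the first inequality.

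For the converse, the plan is to construct an explicit admissible decomposition via mollification. Fix a non-negative, even $\varphi \in C^\infty_{\rm c}(B(0,1))$ with $\int \varphi = 1$ and set $\varphi_s(y) := s^{-N}\varphi(y/s)$. Integrating the algebraic identity $f(x) = (-1)^k \Delta^k_y f(x) + \sum_{j=1}^k (-1)^{j-1}\binom{k}{j} f(x+jy)$ against $\varphi_t(y)\,dy$ and using the evenness of $\varphi$ to rewrite each shifted integral as a convolution with the rescaled kernel $\varphi_{jt}$ yields the decomposition $f = f_0 + f_1$ with
\[
f_0(x) := (-1)^k \int \varphi_t(y)\, \Delta^k_y f(x)\, dy, \qquad f_1 := \Psi_t \ast f, \quad \Psi_t := \sum_{j=1}^k (-1)^{j-1}\binom{k}{j}\, \varphi_{jt}.
\]
For $f_0$, Minkowski's inequality in $X$ together with the pointwise bound $\varphi_t \lesssim t^{-N}\mathbf{1}_{B(0,t)}$ reduces matters to Jensen's inequality when $p \geq 1$; for $p \in (0,1)$ one passes through the supremum modulus $\omega_k(f,t)_X$ and invokes its equivalence with the averaged $L^p$-modulus in r.i.\ spaces as in \cite{KolomoitsevTikhonov}. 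For $f_1$, a short combinatorial computation based on the identity $\sum_{j=0}^k (-1)^j \binom{k}{j} j^m = 0$ for $0 < m < k$ shows that $\int \Psi_t = 1$ and $\int \Psi_t(y)\, y^\alpha\, dy = 0$ for $1 \leq |\alpha| < k$, so that $\Psi_t$ reproduces polynomials of degree $< k$. Consequently $\nabla^k f_1 = (\nabla^k \Psi_t) \ast f$, and the smooth compactly supported kernel $\nabla^k \Psi_t$ itself enjoys vanishing moments up to order $k-1$ by integration by parts. Combining these moment conditions with the explicit identity $f - f_1 = (-1)^k \int \varphi_t(y)\, \Delta^k_y f\, dy$, one can recast $\nabla^k f_1$ as an integral of $\Delta^k_h f$ against a compactly supported kernel at scale $t$ carrying the dimensional factor $t^{-k}$; this produces $t^k \|\nabla^k f_1\|_X \lesssim (\tfrac{1}{t^N}\int_{|y|\leq t} \|\Delta^k_y f\|_X^p\, dy)^{1/p}$, and assembling the two pieces completes the proof.

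The chief technical obstacle will be the estimate on $\|\nabla^k f_1\|_X$: since $f$ is only assumed to belong to $X$, so that $\nabla^k f$ need not exist as a function, the bound must be expressed directly in terms of $k$-th order differences of $f$ rather than derivatives, and this rests on the moment-vanishing properties of $\nabla^k \Psi_t$ together with a careful rewriting of the convolution at the non-smooth level. Additional care is needed to keep all constants independent of $f$, $t$, and $X$, and to handle the subcase $p \in (0,1)$ uniformly, where Jensen's inequality for the $f_0$ piece runs in the wrong direction and the detour through the supremum modulus becomes unavoidable.
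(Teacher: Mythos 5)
The easy direction, bounding the averaged quantity by the $K$-functional, is fine and matches the paper's ``obvious'' step. The converse direction has a genuine gap for $p\in(0,1)$, and you have in fact located it yourself but not resolved it. Your Minkowski argument for $\|f_0\|_X$ produces the $L^1$-average $\frac{1}{t^N}\int_{|y|\le t}\|\Delta^k_y f\|_X\,dy$; when $p<1$ the $L^p$-average is \emph{smaller}, so Jensen runs the wrong way, and the proposed detour --- bound $\|f_0\|_X$ by $\omega_k(f,t)_X$ and then invoke ``equivalence of the sup-modulus with the averaged $L^p$-modulus in r.i.\ spaces'' --- is circular: once one uses the known relation $K(t^k,f;X,\dot W^kX)\approx\omega_k(f,t)_X$ (the paper's equation~\eqref{lemmaAke0}), that equivalence \emph{is} the lemma. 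Moreover the reference \cite{KolomoitsevTikhonov} treats only $X=L^p(\R^d)$, not general r.i.\ spaces, so it cannot be invoked to close the argument here.

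The paper avoids this obstruction entirely. It reduces immediately to showing
$\omega_k(f,t)_X\lesssim\bigl(\tfrac{1}{t^N}\int_{|h|\le t}\|\Delta^k_h f\|_X^p\,dh\bigr)^{1/p}$
for all $p>0$, and proves this by two combinatorial steps: a doubling estimate $\|\Delta^k_{2h}f\|_X\lesssim\|\Delta^k_h f\|_X$, and the algebraic identity
\begin{align*}
\Delta^k_h f(x)=\sum_{i=1}^k(-1)^i\binom{k}{i}
\left[\Delta^k_{\frac{i(\xi-h)}{k}} f(x+ih)-\Delta^k_{h+\frac{i(\xi-h)}{k}} f(x)\right],
\end{align*}
valid for arbitrary $\xi$. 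Taking $X$-norms, raising to the power $p$, integrating over $|\xi|\le t$, and changing variables bounds $t^N\|\Delta^k_h f\|_X^p$ by $\int_{|\xi|\le t}\|\Delta^k_\xi f\|_X^p\,d\xi$ uniformly in $|h|\le t$, with no Jensen step at all. This is exactly why the argument survives $p<1$. A secondary point: even in the $p\ge1$ regime, your bound $t^k\|\nabla^k f_1\|_X\lesssim(\ldots)^{1/p}$ is only sketched --- the ``careful rewriting'' of $(\nabla^k\Psi_t)\ast f$ as an integral of $\Delta^k_h f$ against a scale-$t$ kernel, using the vanishing moments of $\nabla^k\Psi_t$, is a genuine Marchaud-type computation that needs to be carried out rather than asserted; without it the hard direction is incomplete even where Jensen applies.
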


\begin{proof}
We will make use of the classical estimate (cf. \eqref{EstimKFunctMod})
\begin{equation}\label{lemmaAke0}
K\left(t^k,f;X,\dot{W}^kX\right)
\approx \omega_k(f,t)_X.
\end{equation}
Thus, we only need to prove that
\begin{align}\label{lemmaAke3}
\omega_k(f,t)_X\approx
\left(\frac{1}{t^N}
\int_{|h|\le t}\left\|
\Delta^k_hf\right\|_X^p\,dh\right)^{\frac{1}{p}}.
\end{align}
Indeed, the estimate $\gtrsim$ is obvious.
To deal with the converse estimate, we first claim that
\begin{align}\label{lemmaAke1}
\left\|\Delta^k_{2h}f\right\|_{X}
\lesssim\left\|\Delta^k_hf\right\|_X.
\end{align}
Indeed, applying the translation invariance of $X$, we find that,
for any $h\in\R^N$,
\begin{align*}
\left\|\Delta^k_{2h}f\right\|_X
&=\left\|\sum_{i=0}^{k}\binom{k}{i}\Delta^k_hf(\cdot+ih)
\right\|_X \le\sum_{i=0}^{k}\binom{k}{i}
\left\|\Delta^k_hf(\cdot+ih)\right\|_X=2^k
\left\|\Delta^k_hf\right\|_X,
\end{align*}
which completes the proof
of \eqref{lemmaAke1}.

Observe that, for any $h,x,\xi\in\R^N$,
\begin{align*}
\Delta_h^kf(x)
=\sum_{i=1}^{k}(-1)^i\binom{k}{i}
\left[\Delta^k_{\frac{i(\xi-h)}{k}}
f(x+ih)-\Delta^k_{h+\frac{i(\xi-h)}{k}}f(x)\right].
\end{align*}
From this, the translation invariance
of $X$ again, and \eqref{lemmaAke1},
we infer that, for any $h,\xi\in\R^N$,
\begin{align*}
\left\|\Delta^k_hf\right\|_{X}
&\le\sum_{i=1}^{k}\binom{k}{i}
\left[\left\|\Delta^k_{\frac{i(\xi-h)}{k}}f\right\|_{X}
+\left\|\Delta^k_{h+\frac{i(\xi-h)}{k}}f
\right\|_{X}\right]\\
&\lesssim\sum_{i=1}^{k}
\left[\left\|\Delta^k_{\frac{i(\xi-h)}{2k}}f\right\|_{X}
+\left\|\Delta^k_{h+\frac{i(\xi-h)}{k}}f
\right\|_{X}\right].
\end{align*}
Integrating the previous estimate on $|\xi| \leq t$, we obtain
\begin{align}\label{lemmaAke2}
t^N\left\|\Delta^k_hf\right\|_{X}^p
\lesssim\sum_{i=1}^{k}
\int_{|\xi|\le t}
\left[\left\|\Delta^k_{\frac{i(\xi-h)}{2k}}f\right\|_{X}^p
+\left\|\Delta^k_{h+\frac{i(\xi-h)}{k}}f
\right\|_{X}^p\right]\,d\xi.
\end{align}
For any $t\in(0,\infty)$,
$\xi,h\in\R^N$ with $|\xi|\le t$ and $|h|\le t$,
and $i\in\{1,\ldots,k\}$,
\begin{align*}
\left|\frac{i(\xi-h)}{2k}\right|
\le t\ \qquad \text{and}\qquad \left|h+\frac{i(\xi-h)}{k}\right|\le t.
\end{align*}
Applying this, \eqref{lemmaAke2}, and
a change of variables, we further conclude that,
for any $h\in \R^N$ with $|h|\le t$,
\begin{equation*}
t^N\left\|\Delta^k_hf\right\|_X^p
\lesssim\sum_{i=1}^{k}
\int_{|\xi|\le t}
\left[\frac{i}{2k}\left\|\Delta^k_{\xi}f\right\|_X^p
+\frac{i}{k}\left\|\Delta^k_{\xi} f\right\|_X^p\right]\,d\xi \approx\int_{|\xi|\le t}\left\|\Delta^k_{\xi} f\right\|_X^p\,d\xi.
\end{equation*}
Accordingly the estimate
$\lesssim$ of \eqref{lemmaAke3} holds true,
which further completes the proof of \eqref{lemmaAke3}.
\end{proof}


\begin{thebibliography}{00}


\bibitem[BS88]{BennettSharpley} Bennett, C., Sharpley, R.: \emph{Interpolation of Operators}. Academic Press, Boston, 1988.

\bibitem[BL76]{BerghLofstrom} Bergh, J., L\"ofstr\"om, J.: \emph{Interpolation Spaces. An Introduction.} Springer, Berlin, 1976.

\bibitem[B98]{Bobrowski} Bobrowski, A.: \emph{A note on convergence of semigroups}. Ann. Pol. Math. \textbf{69}, 107--127 (1998).


\bibitem[Bog98]{Bogachev} Bogachev, V. I.: \emph{Gaussian Measures.} American Mathematical Society, Providence, 1998.

\bibitem[BKP]{Bogachev19} Bogachev, V.I.,  Kosov, E.D., Popova, S.N.: \emph{A new approach to Nikolskii--Besov classes.} Mosc. Math. J. \textbf{19}, 619--654 (2019).

\bibitem[BIK13]{BIK} Bojarski, B.,  Ihnatsyeva, L.,  Kinnunen, J.: \emph{How to recognize polynomials in higher order Sobolev spaces.} Math. Scand. \textbf{112}, 161--181 (2013).

\bibitem[Bor07]{Borghol} Borghol, R.: \emph{Some properties of Sobolev spaces.} Asymptot. Anal. \textbf{51}, 303--318 (2007).

\bibitem[BBM01]{BourgainBrezisMironescu} Bourgain, J., Brezis, H., Mironescu, P.: \emph{Another look at Sobolev spaces}. In: Optimal Control and Partial Differential Equations, IOS Press, 2001, 439--455.


\bibitem[B02]{Brezis} Brezis, H.: \emph{How to recognize constant functions. A connection with Sobolev spaces.} Uspekhi Mat. Nauk \textbf{57}, 59--74 (2002).

\bibitem[BL85]{BrezisLieb} Brezis, H., Lieb, E.H.: \emph{Sobolev inequalities with remainder terms.} J. Funct. Anal. \textbf{62}, 73--86 (1985).

\bibitem[BM21]{BrezisMironescu} Brezis, H., Mironescu, P.: \emph{Sobolev Maps to the Circle--From the Perspective of Analysis, Geometry, and Topology.}
Progress in Nonlinear Differential Equations and their Applications, \textbf{96}. Birkh\"auser, New York, 2021.

\bibitem[BN83]{BrezisNirenberg} Brezis, H., Nirenberg, L.: \emph{Positive solutions of nonlinear elliptic equations involving critical Sobolev exponents.} Comm. Pure Appl. Math. \textbf{36},  437--477 (1983).

\bibitem[B70]{Brudnyi70} Brudnyi, Yu.A.: \emph{A multidimensional analogue of a certain theorem of Whitney}. Math. USSR-Sb. \textbf{2}, 157--170 (1970).

\bibitem[B09]{Brudnyi} Brudnyi, Yu.A.: \emph{Sobolev spaces and their relatives: Local polynomial approximation approach.} In: Sobolev Spaces in Mathematics, vol. II. Springer, New York, 2009, 31--68.


\bibitem[BGT22]{Garofalo} Buseghin, F.,  Garofalo, N., Tralli, G.: \emph{On the limiting behaviour of some nonlocal seminorms: a new phenomenon}. Ann. Sc. Norm. Super. Pisa Cl. Sci. (5) \textbf{23}, 837--875 (2022).


\bibitem[DGPYYZ22]{DGPYYZ} Dai, F., Grafakos, L., Pan, Z., Yang, D., Yuan, W., Zhang, Y.: \emph{The Bourgain--Brezis--Mironescu formula on ball Banach function spaces}. Math. Ann. (2022).
  \url{https://doi.org/10.1007/s00208-023-02562-5}

\bibitem[DLYYZ23]{DLYYZ23}
Dai, F., Lin, X., Yang, D., Yuan, W., Zhang, Y.:
\emph{Brezis--Van Schaftingen--Yung formulae in ball
Banach function spaces with applications to fractional
Sobolev and Gagliardo--Nirenberg inequalities}.
Calc. Var. Partial Differential Equations \textbf{62},  73 pp. (2023).

\bibitem[DL93]{DeVoreLorentz} DeVore, R.A., Lorentz, G.G.: \emph{Constructive Approximation.} Springer, Berlin, 1993.





\bibitem[DI93]{DitzianIvanov} Ditzian, Z., Ivanov, K.G.: \emph{Strong converse inequalities}. J. Anal. Math. \textbf{61}, 61--111 (1993).

\bibitem[DM23]{DominguezMilman} Dom\'inguez, O., Milman, M.: \emph{Bourgain--Brezis--Mironescu--Maz'ya--Shaposhnikova limit formulae for fractional Sobolev spaces via interpolation and extrapolation.} Calc. Var. Partial Differential Equations \textbf{62}, 37 pp. (2023).

\bibitem[DT23]{DominguezTikhonov} Dom\'inguez, O., Tikhonov, S.: \emph{Function spaces of logarithmic smoothness: embeddings and characterizations}. Mem. Amer. Math. Soc. \textbf{282}, no. 1393, 180 pp. (2023).

\bibitem[FKR15]{Ferreira} Ferreira, R., Kreisbeck, C., Ribeiro, A.M.: \emph{Characterization of polynomials and higher-order Sobolev spaces in terms of functionals involving difference quotients}. Nonlinear Anal. \textbf{112}, 199--214 (2015).

\bibitem[G14]{g14}
Grafakos, L.:
\emph{Modern Fourier Analysis}.
Third edition. Springer, New York, 2014.

\bibitem[Gr75]{Gross} Gross, L.: \emph{Logarithmic Sobolev inequalities}. Amer. J. Math. \textbf{97}, 1061--1083 (1975).


\bibitem[H67]{Hormander} H\"ormander, L.: \emph{Hypoelliptic second order differential equations}. Acta Math. \textbf{119}, 147--171 (1967).





\bibitem[JN61]{JohnNirenberg} John, F., Nirenberg, L.: \emph{On functions of bounded mean oscillation}. Comm. Pure Appl. Math. \textbf{14}, 785--799 (1961).

\bibitem[KMX05]{KaradzhovMilmanXiao} Karadzhov, G.E., Milman, M., Xiao, J.: \emph{Limits of higher-order Besov spaces and sharp reiteration theorems}. J. Funct. Anal. \textbf{221}, 323--339 (2005).

\bibitem[KT20]{KolomoitsevTikhonov} Kolomoitsev, Y., Tikhonov, S.:
\emph{Properties of moduli of smoothness in $L^p(\R^d)$}.
J. Approx. Theory \textbf{257}, 29 pp. (2020).

\bibitem[KL05]{KolyadaLerner} Kolyada V.I., Lerner, A.K.: \emph{On limiting embeddings of Besov spaces}. Studia Math. \textbf{171},  1--13 (2005).

\bibitem[K19]{Kosov} Kosov, E.D.: \emph{Besov classes on finite and infinite dimensional spaces.} Sb. Math. \textbf{210}, 663--692 (2019).


\bibitem[L99]{Ledoux} Ledoux, M.: \emph{Concentration of measure and logarithmic Sobolev inequalities.} In S\'eminaire de Probabilit\'es, XXXIII. Lecture Notes in Math. \textbf{1709},  120--216. Springer, Berlin, 1999.


\bibitem[MM14]{MartinMilman} Mart\'in, J., Milman, M.: \emph{Fractional Sobolev inequalities: symmetrization, isoperimetry and interpolation}. Ast\'erisque \textbf{366}, 127 pp. (2014).

\bibitem[MS02]{Mazya} Maz'ya, V., Shaposhnikova, T.:  \emph{On the Bourgain, Brezis,
and Mironescu theorem concerning limiting embeddings of fractional Sobolev
spaces}. J. Funct. Anal. \textbf{195}, 230--238 (2002).

\bibitem[M05]{Milman} Milman, M.: \emph{Notes on limits of Sobolev spaces and the continuity of interpolation scales}. Trans. Amer. Math. Soc. \textbf{357}, 3425--3442 (2005).


\bibitem[P04]{Ponce} Ponce, A.C.: \emph{An estimate in the spirit of Poincar\'e's inequality}. J. Eur. Math. Soc. (JEMS) \textbf{6}, 1--15 (2004).

\bibitem[SKM93]{Samko} Samko, S.G., Kilbas, A.A., Marichev, O.I.: \emph{Fractional Integrals and Derivatives. Theory and Applications}. Gordon $\&$ Breach, New York, 1993.

\bibitem[SHYY17]{shyy17} Sawano, Y., Ho, K.-P.,
Yang, D., Yang, S.: \emph{Hardy spaces for ball quasi-Banach
function spaces}. Dissertationes Math. \textbf{525}, 1--102 (2017).



\bibitem[We74]{Westphal} Westphal, U.: \emph{An approach to fractional powers of operators via fractional differences}. Proc. London Math. Soc. (3) \textbf{29}, 557--576 (1974).

\bibitem[W57]{Whitney} Whitney, H.: \emph{On functions with bounded $n$th difference}. J. Math. Pures Appl. (9) \textbf{36}, 67--95 (1957).







\end{thebibliography}
\end{document}